 \newtheorem{thm}{Theorem}[section]
 \newtheorem{thml}{Theorem}
 \newtheorem{thmlb}{Theorem}
 \newtheorem{cor}[thm]{Corollary}
 \newtheorem{lem}[thm]{Lemma}
 \newtheorem{prop}[thm]{Proposition}
 \theoremstyle{definition}
 \newtheorem{defn}[thm]{Definition}
 \theoremstyle{remark}
 \newtheorem{rem}[thm]{Remark}
\numberwithin{equation}{section}
\numberwithin{figure}{section}
\newcommand{\proj}{{\mathbf Q}}
\newcommand{\e}{\mathrm e}
\newcommand{\C}{{\mathbb C}}
\newcommand{\D}{{\mathbb D}}
\newcommand{\T}{{\mathbb T}}
\newcommand{\R}{{\mathbb R}}
\newcommand{\Z}{{\mathbb Z}}
\newcommand{\setS}{{\Sigma}}
\newcommand{\Uset}{\mathcal{U}}
\newcommand{\Vset}{\mathcal{V}}
\newcommand{\Iop}{\mathbf I}
\newcommand{\Pop}{\mathbf P}
\newcommand{\Rop}{\mathbf R}
\newcommand{\Eop}{\mathbf E}
\newcommand\BV{\mathrm{BV}}
\newcommand{\tpar}{v}
\newcommand{\sigmaalg}{\mathfrak{S}}
\newcommand{\borelalg}{\mathfrak{B}}
\newcommand{\sigmat}{\theta}
\newcommand{\Spec}{\sigma}
\newcommand{\im}{\operatorname{Im}}
\newcommand{\Tope}{{\mathbf T}}
\newcommand{\Sop}{{\mathbf S}}
\newcommand{\Qop}{{\mathbf Q}}
\newcommand{\Cop}{{\mathbf C}}
\newcommand{\Zop}{{\mathbf Z}}
\newcommand{\diff}{{\mathrm d}}
\newcommand{\imag}{{\mathrm i}}
\begin{document}
%
\title[Perron-Frobenius operators and the Klein-Gordon equation]
{Perron-Frobenius operators and the Klein-Gordon \\ equation}


\author[Canto]
{Francisco Canto-Mart\'\i{}n}

\address{Canto-Mart\'\i{}n: Department of Mathematical Analysis\\
University of Sevilla\\
Sevilla\\
SPAIN}

\email{fcanto@us.es}

\author[Hedenmalm]
{H\aa{}kan Hedenmalm}

\address{Hedenmalm: Department of Mathematics\\
The Royal Institute of Technology\\
S -- 100 44 Stockholm\\
SWEDEN}

\email{haakanh@math.kth.se}

\thanks{Research partially supported by Plan Nacional ref MTM2009-09501.
Research of the first author partially supported by Ministerio de Educaci\'on
(FPU). Research of the second author partially supported by the G\"oran 
Gustafsson Foundation (KVA) and by Vetenskapsr\aa{}det (VR).
Research of the third author partially supported by Junta de Andaluc\'\i{}a
ref FQM260.}


\author[Montes]
{Alfonso Montes-Rodr\'\i{}guez}

\address{Montes-Rodr\'\i{}guez: Department of Mathematical Analysis\\
University of Sevilla\\
Sevilla\\
SPAIN}

\email{amontes@us.es}


\subjclass{Primary 42B10, 42A10, 58F11; Secondary 11K50, 31B35, 43A15, 81Q05}

\keywords{Trigonometric system, inversion, composition operator,
Klein-Gordon equation, ergodic theory}


\begin{abstract} 
For a smooth curve $\Gamma$ and a set $\Lambda$ in the plane $\R^2$, let 
$\mathrm{AC}(\Gamma;\Lambda)$ be the space of finite Borel
measures in the plane supported on $\Gamma$, absolutely continuous
with respect to the arc length and whose Fourier transform vanishes
on $\Lambda$. Following \cite{hh}, we say that $(\Gamma,\Lambda)$ is a 
Heisenberg uniqueness pair if $\mathrm{AC}(\Gamma;\Lambda)=\{0\}$.  
In the context of a hyperbola $\Gamma$, the study of Heisenberg uniqueness 
pairs is the same as looking for uniqueness sets $\Lambda$ of a collection 
of solutions to the Klein-Gordon equation.
In this work, we mainly address the issue of finding the dimension of 
$\mathrm{AC}(\Gamma;\Lambda)$ when it is nonzero. We will fix the curve 
$\Gamma$ to be the hyperbola $x_1x_2=1$, and the set $\Lambda=
\Lambda_{\alpha,\beta}$ to be the lattice-cross
\[
\Lambda_{\alpha,\beta}=\left(\alpha \Z\times\{0\}\right)\cup 
\left(\{0\}\times\beta \Z\right),
\] 
where $\alpha,\beta$ are positive reals. We will also consider $\Gamma_+$,
the branch of $x_1x_2=1$ where $x_1>0$. In \cite{hh}, it is shown that 
$\mathrm{AC}(\Gamma;\Lambda_{\alpha,\beta})=\{0\}$ if and only if 
$\alpha\beta\le1$. 
Here, we show that for $\alpha\beta>1$, we get a rather drastic ``phase 
transition'': $\mathrm{AC}(\Gamma;\Lambda_{\alpha,\beta})$ is 
infinite-dimensional whenever $\alpha\beta>1$.  It is shown in \cite{HM2} 
that $\mathrm{AC}(\Gamma_+;\Lambda_{\alpha,\beta})=\{0\}$ if and only if 
$\alpha\beta<4$. Moreover, at the edge $\alpha\beta=4$, the behavior is more 
exotic: the space $\mathrm{AC}(\Gamma_+;\Lambda_{\alpha,\beta})$ is 
one-dimensional. Here, we show that the dimension of 
$\mathrm{AC}(\Gamma_+;\Lambda_{\alpha,\beta})$ is infinite 
whenever $\alpha\beta>4$. Dynamical systems, and more specifically 
Perron-Frobenius operators, will play a prominent role in the presentation.
\end{abstract}

\maketitle

\addtolength{\textheight}{2.2cm}







\begin{section}{Introduction}

\begin{subsection}{Background: the Heisenberg uncertainty principle}
The Heisenberg uncertainty principle asserts that it is not possible to have 
completely accurate information about the position and the momentum of a 
particle at the same time. If $\psi$ is the spatial wave-function, which 
describes the position of the particle in question, and it is known that 
$\psi$ is concentrated to a small region,  then the deviation of the momentum
wave-function of $\psi$ from its mean must be large. The momentum 
wave-function is essentially the Fourier transform of the spatial 
wave-function.
So, we may consider the Heisenberg uncertainty principle as the mathematical 
statement that a function and its Fourier transform cannot both be too 
concentrated simultaneously; cf. \cite{ben}, \cite{hard}, and \cite{hav}.
\end{subsection}

\begin{subsection}{Heisenberg uniqueness pairs} 
Let $\Gamma$ be a finite disjoint union of smooth curves
in the plane and $\Lambda$ a subset of the plane. Let
$\mathrm{AC}(\Gamma;\Lambda)$ be the space of bounded Borel measures $\mu$ in
the plane supported on $\Gamma$, absolutely continuous with respect
to arc length and whose Fourier transform
\begin{equation}
\widehat{\mu}(x_1,x_2)=\int_{\Gamma}
\e^{\imag\pi(x_1y_1+x_2y_2)}\diff\mu(y_1,y_2),
\qquad (x_1,x_2)\in\R^2,
\label{eq-FT}
\end{equation}
vanishes on $\Lambda$.  Following \cite{hh}, we say that
$(\Gamma,\Lambda)$ is a {\em Heisenberg uniqueness pair} if 
$\mathrm{AC}(\Gamma;\Lambda)=\{0\}$.
\\
\noindent
When $\Gamma$ is an algebraic curve, that is, the zero locus
of a polynomial $p$ in two variables with real coefficients, the
requirement that the support of $\mu$ be contained in $\Gamma$
means that $\widehat{\mu}$ solves the partial
differential equation
\begin{equation}\label{ecdp}
p\left(\frac{\partial_ {x_1}}{\pi i},\frac{\partial_{x_2}}{\pi
i}\right)\widehat{\mu}=0.
\end{equation}
So, there is a natural interplay between the Heisenberg uniqueness pairs and 
the theory of partial differential equations (PDE), cf. \cite{hh}. 
The most natural examples appear we consider quadratic polynomials $p$ 
corresponding to the standard conic sections: the line, two parallel
lines, two crossing lines, the hyperbola, the ellipse, and the
parabola.
The natural invariance of Heisenberg uniqueness pairs under affine
transformations of the plane allows us to reduce to the canonical
models for these curves (cf. \cite{hh}). 
The case when $\Gamma$ is either one line or the union of two 
parallel lines was solved completely for general $\Lambda\subset\R^2$ in 
\cite{hh}. In this direction, Blasi-Babot has solved particular cases when
$\Gamma$ is the union of three parallel lines, see \cite{blas}. 
The case when $\Gamma$ is a circle (which also covers the ellipse case
after an affine mapping) was recently studied independently by Lev and by 
Sj\"olin in \cite{ps}, \cite{Lev}, where e.g., circles and unions of straight 
lines are considered as sets $\Lambda$. Also subsets of the unions of
straight lines were considered, and a connection with the Beurling-Malliavin 
theory was made. Very little seems to be known when $\Gamma$ is a parabola 
or two intersecting lines.
\end{subsection}

\begin{subsection}{Heisenberg uniqueness pairs for the hyperbola} 
The case of the hyperbola 
$\Gamma:\,x_1x_2=1$ and the lattice-cross
\[
\Lambda_{\alpha,\beta}=
\left(\alpha\Z\times\{0\}\right)\cup\left(\{0\}\times\beta\Z
\right), 
\]
for given positive reals $\alpha,\beta$ was considered in \cite{hh}, 
where we the following result was obtained.

\begin{thml}[\textrm{Hedenmalm, Montes-Rodr\'{\i}guez}]\label{thh} Let 
$\Gamma$ be the hyperbola $x_1x_2=1$.
 Then $(\Gamma,\Lambda)$ is a Heisenberg uniqueness pair if and only if 
$\alpha\beta\leq 1$.
\end{thml}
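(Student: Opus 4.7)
The plan is to translate the Fourier-vanishing conditions at the lattice-cross into two periodization identities, recast those as a fixed-point problem for a Perron--Frobenius transfer operator associated to translation and inversion, and extract the sharp threshold $\alpha\beta=1$ from the spectral behavior of that operator.

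First I would parametrize the hyperbola by $\R^{\times}\ni t\mapsto(t,1/t)$, so that any $\mu\in\mathrm{AC}(\Gamma;\Lambda_{\alpha,\beta})$ is determined by a density $f\in L^1(\R)$ after absorbing the arc-length Jacobian. The vanishing $\widehat{\mu}(\alpha n,0)=\int_{\R}\e^{\imag\pi\alpha nt}f(t)\,\diff t=0$ for all $n\in\Z$, by Fourier uniqueness on $\R/T\Z$ with $T:=2/\alpha$, is equivalent to
\begin{equation*}
\sum_{k\in\Z} f(t+kT)=0\quad\text{for a.e.\ }t,
\end{equation*}
while the substitution $s=1/t$ rewrites the vanishing of $\widehat{\mu}$ on $\{0\}\times\beta\Z$ as the analogous $S$-periodization of $s^{-2}f(1/s)$, with $S:=2/\beta$. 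Since $TS=4/(\alpha\beta)$, the critical case $\alpha\beta=1$ corresponds to $TS=4$.

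Second, the affine invariance of the HUP property under $(x_1,x_2)\mapsto(\lambda x_1,\lambda^{-1}x_2)$, which on $\Gamma$ is $t\mapsto\lambda t$, lets me reduce to the symmetric case $\alpha=\beta$, hence $T=S$. The two conditions are then exchanged by the involution $\iota\colon t\mapsto 1/t$, and together they assert that $f$ is annihilated by every bounded $T$-periodic function of $t$ and by every bounded $T$-periodic function of $1/t$. The natural object behind this is the group $G$ generated by the translation $\tau\colon t\mapsto t+T$ and the inversion $\iota$: a Hecke-type group whose action on $\R$ is conjugate to a piecewise-M\"obius interval map closely related to the continued-fraction algorithm. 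Letting $\mathbf L$ denote the Perron--Frobenius operator of this map on $L^1$ of a fundamental interval, an unwinding of the inverse branches of $\iota\circ\tau^k$ shows that the two periodization conditions on $f$ are equivalent to the class $[f]$ being a fixed point of $\mathbf L$ subject to the mean-zero side condition coming from the $n=0$ and $m=0$ Fourier coefficients.

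Third, for the HUP direction $\alpha\beta\le 1$ (i.e.\ $T\ge 2$), the interval map is uniformly expanding, with a unique absolutely continuous invariant density (up to scalar multiple) on its fundamental domain; the fixed-point subspace of $\mathbf L$ in $L^1$ is therefore one-dimensional and spanned by a positive density. The mean-zero side condition kills this one-dimensional subspace, forcing $f=0$ a.e.\ and hence $\mu=0$. For the opposite direction $\alpha\beta>1$ (i.e.\ $T<2$), the dynamics degenerate: the branches $\iota\circ\tau^k$ now overlap and the associated map ceases to be uniformly expanding, so $\mathbf L$ admits many fixed points on $L^1$, from which a nonzero measure in $\mathrm{AC}(\Gamma;\Lambda_{\alpha,\beta})$ can be explicitly constructed by telescoping a smooth bump $\phi$ as $\phi-\phi(\cdot+T)$ and iteratively correcting for the inversion condition.

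The main obstacle is the sharpness of the spectral estimate at $T=2$. Soft contraction arguments yield HUP only when $T$ is bounded strictly away from $2$; capturing the exact threshold requires the delicate analysis of marginal (indifferent) fixed points of the interval map at $T=2$, where the Perron--Frobenius operator sits precisely on the boundary between uniqueness and non-uniqueness of invariant density. This is exactly the point at which the connection to continued-fraction dynamics becomes essential, and is the reason why Perron--Frobenius operators, rather than softer duality or orthogonality arguments, drive the proof.
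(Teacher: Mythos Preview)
Your general framework---parametrize by $t\mapsto(t,1/t)$, convert Fourier vanishing to periodization identities, and analyze a Perron--Frobenius operator for a Gauss-type interval map---is the right one, and matches the approach of \cite{hh} as sketched in Section~\ref{Branch}. However, you have the expansion dichotomy reversed, and with it the mechanism. Normalizing to $\alpha=1$, the relevant map is $\tau_\beta(x)=\{-\beta/x\}_2$ on $]\!-\!1,1]$, with derivative $\beta/x^2\ge\beta$. Thus $\tau_\beta$ is uniformly expanding precisely when $\beta=\alpha\beta>1$, which is the \emph{non}-HUP regime, not when $\alpha\beta\le1$. In the HUP regime $\alpha\beta\le1$ the map is \emph{not} uniformly expanding: at $\beta=1$ there is an indifferent fixed point at $x=1$, the only invariant density $(1-x^2)^{-1}$ has infinite mass, and $\Pop_{\tau_1}$ has \emph{no} $L^1$ eigenfunctions on $\T$; for $0<\beta<1$ there are likewise no $L^1$ eigenfunctions on $\T$. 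The pre-annihilator is therefore trivial not because a mean-zero condition kills a one-dimensional fixed space, but because that fixed space is already $\{0\}$ in $L^1$.

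Conversely, for $\alpha\beta>1$ the map does have a spectral gap and a unique invariant probability density $\varrho_0>0$ (Theorem~\ref{lemff}), yet the pre-annihilator is far from trivial: the interval $[-\beta,\beta]\setminus[-1,1]$ is now nonempty, and in the decomposition $f=f_1+f_2+f_3$ of Proposition~\ref{necsuf} one may choose $f_2$ freely and use the spectral gap to solve for $f_1$ via $(\Iop-\Zop^2)^{-1}$ (Theorem~\ref{thm-8.2}). Your picture of ``many fixed points'' in the non-HUP regime is thus not what happens; rather, it is the combination of a \emph{unique} fixed point, a spectral gap, and the extra degrees of freedom on $[-\beta,\beta]\setminus[-1,1]$ that yields the infinite-dimensional pre-annihilator. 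Your final paragraph correctly flags the indifferent fixed point at the threshold as the delicate issue, but its role is the opposite of what you describe: it is what \emph{prevents} an $L^1$ invariant density at $\alpha\beta=1$, thereby securing HUP right up to and including the edge.
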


When one of the branches of the hyperbola is considered, the critical density
changes, see \cite{HM2}.

\begin{thml}
[\textrm{Hedenmalm, Montes-Rodr\'{\i}guez}]\label{theobranch}  
Let $\Gamma_+$ be the branch of the hyperbola $x_1x_2=1$ where $x_1>0$.
Then $\mathrm{AC}(\Gamma_+;\Lambda_{\alpha,\beta})=\{0\}$ if and only if 
$\alpha\beta<4$. Moreover, when $\alpha\beta=4$, 
$\mathrm{AC}(\Gamma_+;\Lambda_{\alpha,\beta})$ is one-dimensional.
\end{thml}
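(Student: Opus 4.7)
\emph{Plan of proof.} Following the pattern of the proof of Theorem~\ref{thh}, I would recast the pair of Fourier-vanishing conditions as a fixed-point problem for a Perron--Frobenius transfer operator and analyze its spectral behavior as a function of $\tau:=\alpha\beta$. First, I parametrize $\Gamma_+$ by $t\mapsto(t,1/t)$, $t>0$, so that any $\mu\in\mathrm{AC}(\Gamma_+;\Lambda_{\alpha,\beta})$ has the form $\diff\mu=f(t)\,\dt$ for some $f\in L^1(0,\infty)$. The condition $\widehat\mu(\alpha n,0)=0$ for $n\in\Z$ becomes, by Poisson summation, the vanishing of the $(2/\alpha)$-periodization of $f\cdot\1_{(0,\infty)}$; applying the substitution $t\mapsto 1/t$ (whose Jacobian is $-1/t^2$), the condition $\widehat\mu(0,\beta m)=0$ for $m\in\Z$ similarly becomes the vanishing of the $(2/\beta)$-periodization of $s\mapsto f(1/s)/s^2\cdot\1_{(0,\infty)}$.

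Next, I use the scaling $t\mapsto\lambda t$, which sends $(\alpha,\beta)$ to $(\alpha/\lambda,\beta\lambda)$, to reduce to $\alpha=\beta=\sqrt{\tau}$, so that the problem depends only on $\tau$. Using the first periodization to express $f$ on $(2/\sqrt{\tau},\infty)$ in terms of its values on $(0,2/\sqrt{\tau})$ and substituting into the second periodization after $t\mapsto 1/t$, the two conditions collapse to a single linear equation $f=\Tope_\tau f$ on a fundamental interval. Here $\Tope_\tau$ is the transfer operator of a piecewise-smooth interval map $T_\tau$ of Gauss type whose inverse branches are the M\"obius maps $t\mapsto 1/(t+2k/\sqrt{\tau})$, $k\ge 1$. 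Thus $\mathrm{AC}(\Gamma_+;\Lambda_{\alpha,\beta})$ is naturally identified with the kernel $\ker(\Iop-\Tope_\tau)$ on the appropriate function space.

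I then analyze $\Tope_\tau$ in the three regimes. For $\tau<4$, the map $T_\tau$ is uniformly expanding on its domain, so $\Tope_\tau$ acts as a strict contraction on a suitable Banach space (for instance, functions of bounded variation), and therefore $\ker(\Iop-\Tope_\tau)=\{0\}$, giving $\mathrm{AC}(\Gamma_+;\Lambda_{\alpha,\beta})=\{0\}$. For $\tau=4$ the principal inverse branch acquires a parabolic (neutrally indifferent) fixed point at an endpoint of the fundamental interval, and I expect $1$ to be a simple eigenvalue of $\Tope_4$ whose invariant density can be written down explicitly, in the spirit of the Gauss measure density $1/(1+t)$; this will produce the one-dimensional solution space. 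For $\tau>4$ the relevant fixed point of the inverse-branch map becomes genuinely attracting (derivative of modulus strictly less than $1$), and an explicit iteration/contraction argument will construct at least one nontrivial fixed point of $\Tope_\tau$, yielding $\mathrm{AC}(\Gamma_+;\Lambda_{\alpha,\beta})\ne\{0\}$.

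The main obstacle is the critical case $\tau=4$: both the existence of an invariant density and, especially, its \emph{uniqueness} must be proved. Since $T_4$ has a neutral fixed point, $\Tope_4$ is not quasi-compact on the usual spaces, and the standard hyperbolic transfer-operator machinery does not directly apply. Uniqueness will require a careful ergodic-theoretic argument tailored to the intermittent setting --- conservativity of $T_4$ together with the precise local behavior of $\Tope_4$ near the neutral fixed point, in the spirit of Thaler-type theorems for intermittent interval maps.
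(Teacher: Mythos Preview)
First, a framing remark: Theorem~\ref{theobranch} is not proved in the present paper; it is quoted from \cite{HM2}. The paper does, however, indicate the mechanism in Subsection~\ref{subsec-GTM1} and in the paragraph following Theorems~\ref{theop}--\ref{theop100}: the relevant operator is $\Pop_{\sigmat_\gamma}$ for the Gauss-type map $\sigmat_\gamma(x)=\{\gamma/x\}_1$ on $[0,1[$, with the normalization $\alpha=2$, $\beta=2\gamma$, so that $\gamma=\alpha\beta/4$. Your overall strategy --- reduce the two periodization conditions to a fixed-point equation for a Gauss-type transfer operator and analyze it by regime --- is exactly the indicated one.

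Your analysis of the critical case $\tau=\alpha\beta=4$ is mistaken, however. You assert that the principal inverse branch acquires a parabolic fixed point, that $\Tope_4$ is not quasi-compact on the usual spaces, and that uniqueness therefore requires Thaler-type intermittent machinery. In fact, at $\tau=4$ (that is, $\gamma=1$) the relevant map is the classical Gauss map $x\mapsto\{1/x\}_1$, and its Perron--Frobenius operator \emph{does} have a spectral gap on $\BV([0,1])$: the second iterate is uniformly expanding (indeed $|(\sigmat_1^2)'|>4$), so the Ionescu-Tulcea--Marinescu theorem applies directly. The eigenvalue $1$ is simple with eigenfunction $c/(1+x)$, and uniqueness is immediate from standard quasi-compact theory --- no intermittency argument is needed. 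The paper says this explicitly: ``at the edge $\alpha\beta=4$, the Perron-Frobenius operator \dots\ induced by the classical Gauss map, has a spectral gap.'' You have confused this with the full-hyperbola edge $\alpha\beta=1$, where $\tau_1(x)=\{-1/x\}_2$ on $[-1,1]$ \emph{does} have an indifferent fixed point at $x=1$ (since $\tau_1(1)=1$ and $\tau_1'(1)=1$) and the invariant density $(1-x^2)^{-1}$ has infinite mass. Concretely, with your own inverse branches $\phi_k(t)=1/(t+k)$ at $\tau=4$, the only point where $|\phi_1'|=1$ is $t=0$, but $\phi_1(0)=1\neq0$, so there is no parabolic fixed point.

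A smaller point on the subcritical regime: the phrase ``$\Tope_\tau$ acts as a strict contraction'' is not the right formulation, since a Perron--Frobenius operator preserves the $L^1$ norm of nonnegative functions. The argument the paper alludes to (and calls ``easy to see'') is rather that for $0<\gamma<1$ the operator in \eqref{gauss} is genuinely sub-Markov, in the sense that $\int_0^1\Pop_{\sigmat_\gamma}|f|\,\diff m=\int_0^\gamma|f|\,\diff m$, and one iterates this mass-loss to force any $L^1$ eigenfunction with $|\lambda|=1$ to vanish.
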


For sub-critical density of the lattice-cross, we have the following two 
theorems, corresponding to the hyperbola and a branch of the hyperbola.

\begin{thm}\label{theop} Let $\Gamma$ be the hyperbola $x_1x_2=1$. Then 
$\mathrm{AC}(\Gamma;\Lambda_{\alpha,\beta})$ is infinite-dimensional for
$\alpha\beta>1$.
\end{thm}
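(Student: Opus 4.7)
The plan is to translate the Heisenberg uniqueness condition into the language of transfer operators and then exploit the super-critical nature of the associated dynamics when $\alpha\beta > 1$ to produce an infinite-dimensional kernel. First, I would parametrize $\Gamma$ by $t \mapsto (t, 1/t)$ with $t \in \R \setminus \{0\}$ and write $d\mu = f(t)\,dt$ for a function $f$ which is $L^1$ with respect to arc length. The vanishing of $\widehat{\mu}$ on $\alpha\Z \times \{0\}$ is equivalent to the $(2/\alpha)$-periodization of $f$ being identically zero almost everywhere, and after the change of variables $s = 1/t$ on each branch, the vanishing on $\{0\}\times\beta\Z$ translates into the $(2/\beta)$-periodization of $s \mapsto s^{-2} f(1/s)$ being identically zero. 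A linear rescaling normalizes both periods to $1$, sends the involution $t \mapsto 1/t$ to $u \mapsto \rho/u$ with $\rho := \alpha\beta/4$, and turns the hypothesis $\alpha\beta > 1$ into $\rho > 1/4$.

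Next, I would split $f = f_+ + f_-$ along the two branches of $\Gamma$ and combine the two periodization identities into a single fixed-point equation $h = \mathcal L_\rho h$ on a fundamental domain for the group generated by the translation $u \mapsto u+1$ and the inversion $u \mapsto \rho/u$. Here $\mathcal L_\rho$ is the Perron--Frobenius (transfer) operator associated to the generalized Gauss map $T_\rho(x) = \{\rho/x\}$, possibly acting on a two-component space to accommodate the branch decomposition via an additional $\mathbb Z/2\mathbb Z$ symmetry. Under this identification, $\mathrm{AC}(\Gamma;\Lambda_{\alpha,\beta})$ becomes $\ker(I - \mathcal L_\rho)$ on the appropriate $L^1$-space, and Theorem~\ref{thh} is the statement that $I - \mathcal L_\rho$ is injective precisely when $\rho \leq 1/4$.

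The heart of the argument is to produce an infinite-dimensional kernel for $I - \mathcal L_\rho$ whenever $\rho > 1/4$. In this super-critical regime the dynamics of $T_\rho$ admits wandering regions whose forward orbits fail to return in a uniform way, so that $\mathcal L_\rho$ is no longer quasi-compact and its essential spectrum meets the unit circle. Concretely, I would select a countable pairwise-disjoint family of intervals $(E_n)$ on which $T_\rho$ is expanding with a uniform escape rate, pick test functions $\psi_n \in C_c^\infty(E_n)$, and form
\[
h_n := \sum_{k \geq 0} \mathcal L_\rho^k \psi_n - \sum_{k \geq 0} \mathcal L_\rho^{k+1} \psi_n,
\]
whose telescoping places $h_n$ in $\ker(I - \mathcal L_\rho)$ provided the series converge in $L^1$. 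The main obstacle will be twofold: first, establishing $L^1$-convergence requires a quantitative escape estimate for $T_\rho$ in the range $\rho > 1/4$ that prevents the orbits from contracting back into the invariant core; second, one must verify genuine linear independence of the $h_n$, which I would handle by examining how each density projects onto a slice transverse to the dynamics and showing that the distinct footprints left by the $\psi_n$ cannot be cancelled by the Perron--Frobenius flow.
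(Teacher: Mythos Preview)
Your reduction to a transfer-operator equation is in the right spirit, but the mechanism you propose for infinite-dimensionality is not the one that operates, and several key claims are incorrect.

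First, the identification of $\mathrm{AC}(\Gamma;\Lambda_{\alpha,\beta})$ with $\ker(I-\mathcal L_\rho)$ on a single fundamental domain is too coarse. With the normalization $\alpha=1$, $\beta>1$, an element of the pre-annihilator splits as $f_1+f_2+f_3$ supported on $[-1,1]$, on $[-\beta,\beta]\setminus[-1,1]$, and on $\R\setminus[-\beta,\beta]$ respectively, and the vanishing conditions become
\[
(\Iop-\Pop_{\tau_\beta}^2)f_1=\text{(explicit operator)}(f_2),\qquad f_3=\text{(determined by }f_1,f_2).
\]
The middle piece $f_2$ is a \emph{free parameter} on the nonempty gap $[-\beta,\beta]\setminus[-1,1]$; the existence of this gap, not a large kernel for the transfer operator, is what yields infinite-dimensionality.

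Second, your dynamical picture is inverted. For $\beta>1$ the map $\tau_\beta(x)=\{-\beta/x\}_2$ on $[-1,1]$ has $|\tau_\beta'(x)|=\beta/x^2\ge\beta>1$, so it is \emph{uniformly expanding}. By Ionescu-Tulcea--Marinescu theory, $\Pop_{\tau_\beta}$ \emph{is} quasi-compact on $\BV([-1,1])$: the eigenvalue $1$ is simple and the complementary part $\Zop$ has spectral radius $<1$. There are no wandering sets; the indifferent fixed point and loss of spectral gap occur at the critical value $\beta=1$, not above it. In particular $\ker(\Iop-\Pop_{\tau_\beta})$ is one-dimensional, so your strategy of populating this kernel cannot succeed.

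Third, your telescoping construction collapses algebraically: if the two series converge, then $\sum_{k\ge0}\mathcal L_\rho^k\psi_n-\sum_{k\ge0}\mathcal L_\rho^{k+1}\psi_n=\psi_n$, which is not a fixed point of $\mathcal L_\rho$.

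The argument in the paper runs the other way. One shows that the right-hand side above has mean zero and lies in $\BV([-1,1])$ whenever $f_2\in\BV([-\beta,\beta]\setminus[-1,1])$. On the mean-zero subspace $\Pop_{\tau_\beta}^2$ agrees with $\Zop^2$, so the spectral gap makes $(\Iop-\Zop^2)^{-1}$ a bounded operator on $\BV$, and one solves for $f_1$ via a convergent Neumann series. This produces an injective extension operator $\Eop:\BV([-\beta,\beta]\setminus[-1,1])\to\mathcal M_\beta^\bot$, and since the domain is infinite-dimensional, so is $\mathcal M_\beta^\bot$.
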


\begin{thm}\label{theop100} Let $\Gamma_+$ be the branch of the hyperbola 
$x_1x_2=1$ with $x_1>0$. Then the space 
$\mathrm{AC}(\Gamma_+;\Lambda_{\alpha,\beta})$ is infinite-dimensional
for $\alpha\beta>4$.
\end{thm}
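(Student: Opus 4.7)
My plan parallels the derivation of Theorem~\ref{theop} for the full hyperbola, adapted to the one-sided dynamics associated with a single branch, with the critical threshold shifted from $1$ to $4$. After an affine rescaling I may assume $\alpha=2$ and $\beta=2c$ with $c:=\alpha\beta/4>1$; I parametrize $\Gamma_+$ by $t\mapsto(t,1/t)$ and write the measure as $d\mu=f(t)\,dt$ with $f\in L^{1}(0,\infty)$. The vanishing of $\widehat{\mu}$ on $\alpha\Z\times\{0\}$ and on $\{0\}\times\beta\Z$ translates, by taking inverse Fourier transforms on the half-line, into the two periodization identities
\[
\sum_{k=0}^{\infty}f(t+k)=0,\ t\in(0,1),\qquad \sum_{k=0}^{\infty}\frac{1}{(t+k/c)^{2}}f\!\Bigl(\frac{1}{t+k/c}\Bigr)=0,\ t\in(0,1/c),
\]
the second identity (after the substitution $u=1/t$) being precisely the Perron-Frobenius equation for the transfer operator of the Gauss-type map whose inverse branches are the M\"obius maps $t\mapsto c/(t+k)$, $k\geq 1$.

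Combining the two identities yields a single fixed-point equation $\mathbf{T}_c f=f$ for a transfer operator $\mathbf{T}_c$ acting on $L^{1}$ of a suitable fundamental domain. At the critical value $c=1$, the operator $\mathbf{T}_1$ is a twisted classical Gauss-Kuzmin operator with $1$ as a simple eigenvalue, consistent with the one-dimensional space produced by Theorem~\ref{theobranch}. When $c>1$ the domain splits across the $\lfloor c\rfloor+1$ unit intervals $(k,k+1)\subset(0,\lceil c\rceil)$, whereas the first identity imposes \emph{only one scalar constraint per point} of $(0,1)$; hence there remain genuine free degrees of freedom at each point, and the strategy is to show that these propagate under iteration of the Gauss-type dynamics to produce an infinite linearly independent family in $\ker(I-\mathbf{T}_c)$.

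Concretely, I would decompose $f=\sum_{k\geq 0}f_k$ with $f_k$ supported on $(k,k+1)$, use the first identity to eliminate $f_0$, and analyze the reduced fixed-point equation for $(f_k)_{k\geq 1}$. Two complementary implementations are available: exhibit an explicit family of solutions by freely prescribing $f$ on an appropriate ``free'' subinterval and propagating via the inverse branches $t\mapsto c/(t+k)$; or apply an Ionescu-Tulcea--Marinescu type argument to establish quasi-compactness of $\mathbf{T}_c$ on a H\"older or bounded-variation space and then count multiplicities at the peripheral eigenvalue. The main obstacle is this last point: thermodynamic formalism typically yields only a \emph{finite}-dimensional peripheral eigenspace for a single expanding map, so the infinite multiplicity at eigenvalue $1$ must be traced specifically to the under-determined nature of the periodization constraint in the supercritical regime $c>1$, rather than to any spectral feature of the Gauss-type map itself. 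A final routine verification that the constructed solutions lie in $L^{1}(0,\infty)$ relative to arc length and induce linearly independent measures on $\Gamma_+$ then delivers the required infinite-dimensionality of $\mathrm{AC}(\Gamma_+;\Lambda_{\alpha,\beta})$.
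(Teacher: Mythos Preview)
Your setup is essentially that of the paper: reduce by affine invariance to $\alpha=2$, $\beta=2\gamma$ with $\gamma>1$, rewrite membership in the pre-annihilator $\mathcal{N}_{2\gamma}^\bot$ as a pair of periodization identities, and recognize the Perron--Frobenius operator $\Pop_+:=\Pop_{\sigmat_\gamma}$ of the Gauss-type map $\sigmat_\gamma(x)=\{\gamma/x\}_1$. The paper's decomposition is $f=f_1+f_2+f_3$ with $f_1\in L^1([0,1])$, $f_2\in L^1([1,\gamma])$, $f_3\in L^1([\gamma,\infty[)$ (rather than your unit-interval pieces), leading to the characterization
\[
(\Iop-\Pop_+^2)f_1=\text{(bounded operator applied to }f_2),\qquad f_3=\text{(bounded operator applied to }f_1,f_2).
\]

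Where your plan goes wrong is in the last paragraph. You frame the spectral gap as an \emph{obstacle} (``thermodynamic formalism typically yields only a finite-dimensional peripheral eigenspace\dots so the infinite multiplicity at eigenvalue $1$ must be traced to the under-determined nature of the periodization constraint''). This inverts the logic of the argument. The paper uses the Ionescu-Tulcea--Marinescu theorem precisely to establish that, on $\BV([0,1])$, the eigenvalue $1$ of $\Pop_+$ is simple and isolated, so that $\Pop_+=\langle\,\cdot\,,1\rangle\varrho_0+\Zop_+$ with $\Zop_+$ of spectral radius $<1$ on $\BV$. One checks that the right-hand side of the first equation above has mean zero (the two periodizations each contribute $\langle f_2,1\rangle$ with opposite signs), so the equation for $f_1$ reduces to $(\Iop-\Zop_+^2)f_1=(\text{operator on }f_2)$, which is now \emph{invertible} by a Neumann series. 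Thus every $f_2\in\BV([1,\gamma])$ determines a unique $f_1\in\BV([0,1])$ and then $f_3$, yielding an injective extension operator $\Eop_+:\BV([1,\gamma])\to\mathcal{N}_{2\gamma}^\bot$. Infinite-dimensionality of the range follows because $\BV([1,\gamma])$ is infinite-dimensional and $\Eop_+$ restricts to the identity on $[1,\gamma]$.

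So the ``free subinterval'' you allude to is exactly $[1,\gamma]$, and the ``propagation'' is the Neumann series $(\Iop-\Zop_+^2)^{-1}$; the spectral gap is what guarantees this converges in $\BV$. Your second proposed implementation --- counting peripheral multiplicities of $\Pop_+$ itself --- would indeed give only a finite number and is not the route to take. The missing idea in your proposal is the mean-zero cancellation that lets one replace $\Pop_+^2$ by $\Zop_+^2$ and thereby invert.
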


\noindent Although the proofs of Theorem \ref{theop} and
\ref{theop100} share a certain degree of parallelism, the proof of Theorem
\ref{theop} is more delicate than that of Theorem
\ref{theop100}. Mainly, the difference is that at the edge $\alpha\beta=4$,
the Perron-Frobenius operator which appears in the context of of Theorem 
\ref{theop100}, induced by the classical Gauss map, has a spectral gap 
[acting on the space of functions of bounded variation], while for the
Perron-Frobenius operator associated to the edge case $\alpha\beta=1$ 
in the context of Theorem \ref{theop} does not have such a spectral gap;
this is so because the Gauss-type transformation which defines the 
Perron-Frobenius operator has an indifferent fixed point.
\medskip

\noindent The basic invariance properties of Heisenberg uniqueness pairs allow
us to take $\alpha=1$ and we may appeal to duality and reformulate Theorem 
\ref{thh} as follows (cf. \cite{hh}).

\begin{thmlb}\label{thhh}
Let $\mathcal{M}_\beta$ be the linear subspace of $L^\infty(\R)$ spanned
by the functions $x\mapsto\e^{\imag m\pi x}$ and 
$x\mapsto\e^{\imag n\pi\beta/x}$, where $m,n$ range over the integers and 
$\beta$ is a fixed positive real. 
Then $\mathcal{M}_\beta$ is weak-star dense in $L^\infty(\R)$ if and only if 
$\beta\leq1$.
\end{thmlb}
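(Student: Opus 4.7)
The plan is to derive Theorem~\ref{thhh} from Theorem~\ref{thh} by a standard duality argument, preceded by a rescaling step.

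First I would appeal to the scaling invariance of the problem. Using the parametrization $t\mapsto(t,1/t)$ of $\Gamma$ discussed below, the change of variable $s=\alpha t$ sets up a linear isomorphism between $\mathrm{AC}(\Gamma;\Lambda_{\alpha,\beta})$ and $\mathrm{AC}(\Gamma;\Lambda_{1,\alpha\beta})$. Hence Theorem~\ref{thh} may be rephrased as: $\mathrm{AC}(\Gamma;\Lambda_{1,\beta})=\{0\}$ if and only if $\beta\le 1$ (with $\beta$ now in the role of $\alpha\beta$). It therefore suffices to identify the condition $\mathrm{AC}(\Gamma;\Lambda_{1,\beta})=\{0\}$ with the weak-star density of $\mathcal{M}_\beta$ in $L^\infty(\R)$.

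Next, parametrize $\Gamma$ by $t\mapsto(t,1/t)$, $t\in\R\setminus\{0\}$. Any finite Borel measure $\mu$ on $\Gamma$ absolutely continuous with respect to arc length may be written as $\diff\mu=g(t)\,\diff t$ for a unique $g\in L^1(\R)$ (the arc-length Jacobian $\sqrt{1+t^{-4}}$ is absorbed into $g$), and conversely every $g\in L^1(\R)$ arises in this way. Under this identification, \eqref{eq-FT} becomes
\[
\widehat{\mu}(x_1,x_2)=\int_\R \e^{\imag\pi(x_1 t+x_2/t)}\,g(t)\,\diff t,
\]
so the vanishing of $\widehat{\mu}$ on $\Z\times\{0\}$ amounts to $\int_\R g(t)\,\e^{\imag\pi m t}\,\diff t=0$ for every $m\in\Z$, and the vanishing on $\{0\}\times\beta\Z$ amounts to $\int_\R g(t)\,\e^{\imag\pi n\beta/t}\,\diff t=0$ for every $n\in\Z$. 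In summary, $\mathrm{AC}(\Gamma;\Lambda_{1,\beta})=\{0\}$ is exactly the statement that $\mathcal{M}_\beta$ has trivial pre-annihilator in $L^1(\R)$ under the standard $L^1$--$L^\infty$ pairing.

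To finish, I would invoke the classical duality $L^\infty(\R)=L^1(\R)^*$ together with the Hahn--Banach theorem: a subspace of a dual Banach space is weak-star dense precisely when its pre-annihilator is trivial. Applied to $\mathcal{M}_\beta\subset L^\infty(\R)$, this together with Theorem~\ref{thh} yields the desired equivalence. No substantial obstacle arises; the only point deserving care is the bijection between finite AC measures on the unbounded curve $\Gamma$ and $L^1$-densities in the parameter $t$, which is routine because the parametrization is smooth and the Jacobian is strictly positive on $\R\setminus\{0\}$.
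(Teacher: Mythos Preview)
Your argument is correct and follows precisely the route indicated in the paper: use the affine invariance of Heisenberg uniqueness pairs to normalize $\alpha=1$, then identify $\mathrm{AC}(\Gamma;\Lambda_{1,\beta})$ with the pre-annihilator of $\mathcal{M}_\beta$ via the parametrization $t\mapsto(t,1/t)$ and the $L^1$--$L^\infty$ duality, concluding by Hahn--Banach. The paper does not spell out these steps (it simply refers to \cite{hh}), so your write-up is in fact more detailed than what appears there.
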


\noindent The analogous reformulation of Theorem \ref{theop} runs as follows:

\begin{thm}\label{tpo}
Let $\mathcal{M}_\beta$ be the linear subspace of $L^\infty(\R)$ spanned
by the functions $x\mapsto\e^{\imag m\pi x}$ and 
$x\mapsto\e^{\imag n\pi\beta/x}$, 
where $m,n$ range over the integers. Then the weak-star closure of
$\mathcal{M}_\beta$ in $L^\infty(\R)$ has infinite codimension in 
$L^\infty(\R)$ for $\beta>1$.
\end{thm}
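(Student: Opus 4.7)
The plan is to recognize Theorem \ref{tpo} as the functional-analytic dual of Theorem \ref{theop} (taken with $\alpha=1$), by exhibiting a linear isomorphism between the pre-annihilator $\mathcal{M}_\beta^\perp\subset L^1(\R)$ and the space $\mathrm{AC}(\Gamma;\Lambda_{1,\beta})$ of measures on the hyperbola.

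Since $L^\infty(\R)=(L^1(\R))^*$, the bipolar theorem shows that the weak-star closure of $\mathcal{M}_\beta$ in $L^\infty(\R)$ has infinite codimension if and only if its pre-annihilator
\[
\mathcal{M}_\beta^\perp=\Bigl\{f\in L^1(\R):\int_{\R}f(x)\,\e^{\imag m\pi x}\,\diff x=0\text{ and }\int_\R f(x)\,\e^{\imag n\pi\beta/x}\,\diff x=0\text{ for all }m,n\in\Z\Bigr\}
\]
is infinite-dimensional.

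Next, I would parametrize the hyperbola $\Gamma:x_1x_2=1$ by the map $\R\setminus\{0\}\ni t\mapsto(t,1/t)$, and to each $f\in L^1(\R)$ associate the Borel measure $\mu_f$ on $\Gamma$ determined by $\diff\mu_f(t,1/t)=f(t)\,\dt$. The arc-length element on $\Gamma$ is $\diff s=\sqrt{1+t^{-4}}\,\dt$, so the density of $\mu_f$ with respect to arc length is $f(t)/\sqrt{1+t^{-4}}$, which lies in $L^1(\diff s)$ precisely when $f\in L^1(\dt)$. Hence $f\mapsto\mu_f$ is a linear bijection from $L^1(\R)$ onto the space of finite Borel measures on $\Gamma$ that are absolutely continuous with respect to arc length. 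With $\alpha=1$, the values of $\widehat{\mu}_f$ on the lattice-cross $\Lambda_{1,\beta}$ are
\[
\widehat{\mu}_f(m,0)=\int_\R f(t)\,\e^{\imag m\pi t}\,\dt,\qquad \widehat{\mu}_f(0,n\beta)=\int_\R f(t)\,\e^{\imag n\pi\beta/t}\,\dt,
\]
so $\widehat{\mu}_f$ vanishes identically on $\Lambda_{1,\beta}$ if and only if $f\in\mathcal{M}_\beta^\perp$. Consequently $f\mapsto\mu_f$ restricts to a linear isomorphism $\mathcal{M}_\beta^\perp\to\mathrm{AC}(\Gamma;\Lambda_{1,\beta})$.

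The conclusion then follows immediately: for $\beta>1$ one has $\alpha\beta=\beta>1$, so by Theorem \ref{theop} the space $\mathrm{AC}(\Gamma;\Lambda_{1,\beta})$ is infinite-dimensional, and hence so is $\mathcal{M}_\beta^\perp$. The main obstacle in the overall program is therefore not this duality argument, which is essentially cosmetic once the identification of annihilators has been spelled out, but rather the construction underlying Theorem \ref{theop} of infinitely many linearly independent annihilating measures. As the introduction emphasizes, the real difficulty there is that at the critical value $\alpha\beta=1$ the Perron-Frobenius operator of the associated Gauss-type transformation has no spectral gap, owing to an indifferent fixed point; that obstruction is absent from the branch-of-hyperbola setting of Theorem \ref{theop100}, where the classical Gauss map already provides the requisite spectral gap on $\BV$.
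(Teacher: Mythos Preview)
Your duality argument is correct as far as it goes: the identification $f\mapsto\mu_f$ does give a linear isomorphism between $\mathcal{M}_\beta^\perp$ and $\mathrm{AC}(\Gamma;\Lambda_{1,\beta})$, and this is exactly the equivalence the paper invokes in the introduction when it calls Theorem~\ref{tpo} ``the analogous reformulation'' of Theorem~\ref{theop}.

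The gap is that your reduction runs in the wrong logical direction. In the paper, Theorem~\ref{theop} has no independent proof; it and Theorem~\ref{tpo} are announced as equivalent reformulations, and the substance is supplied by proving Theorem~\ref{tpo} \emph{directly}. Concretely, the paper's proof of Theorem~\ref{tpo} reads in full: ``This is immediate from Theorem~\ref{thm-8.2}~$(ii)$.'' That theorem builds, for each $\beta>1$, a bounded extension operator $\Eop:\BV([-\beta,\beta]\setminus[-1,1])\to L^1(\R)$ whose range lies in $\mathcal{M}_\beta^\perp$ and is infinite-dimensional. The construction uses Proposition~\ref{necsuf} (a characterization of $\mathcal{M}_\beta^\perp$ via the Perron-Frobenius operator $\Pop_{\tau_\beta}$), Theorem~\ref{lemff} (the exterior spectrum of $\Pop_{\tau_\beta}$ on $\T$ is $\{1\}$, simple), and the spectral decomposition of Theorem~\ref{tf} to invert $\Iop-\Zop^2$ on $\BV([-1,1])$ by a Neumann series. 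None of this is captured by invoking Theorem~\ref{theop}; rather, Theorem~\ref{theop} is what one \emph{obtains} once Theorem~\ref{tpo} has been established by these means. So your proposal is circular: you have restated the equivalence the paper already asserts, and deferred the actual content to a theorem whose only proof is the one you were asked to give.
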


If we instead take $\alpha=2$, we may reformulate Theorems \ref{theobranch} 
and \ref{theop100} as follows. 

\begin{thmlb}
\label{tpo100} 
Let $\mathcal{N}_\beta$ be the linear subspace of
$L^\infty(\R_+)$ spanned by the functions $x\mapsto\e^{\imag 2m\pi x}$ and 
$x\mapsto\e^{\imag n\pi\beta/x}$, where $m,n$ range over the integers and 
$\beta$ is a fixed positive real. Then $\mathcal{N}_\beta$ is weak-star dense 
in $L^\infty(\R_+)$ if and only if $\beta<2$. Moreover, the weak-star
closure of $\mathcal{N}_\beta$ in $L^\infty(\R_+)$ has codimension $1$ in
$L^\infty(\R_+)$ for $\beta=2$. 
\end{thmlb}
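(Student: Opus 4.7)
The plan is to prove Theorem~\ref{tpo100} as a straightforward duality reformulation of Theorem~\ref{theobranch} with $\alpha=2$, in direct analogy with how Theorem~\ref{thhh} reformulates Theorem~\ref{thh}. The essential task is to set up a linear bijection between the pre-annihilator of $\mathcal{N}_\beta$ in $L^1(\R_+)$ and the space $\mathrm{AC}(\Gamma_+;\Lambda_{2,\beta})$, and then read off the conclusions.

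First I would parametrize the branch by $t\mapsto(t,1/t)$, $t>0$. Any finite Borel measure $\mu$ on $\Gamma_+$ absolutely continuous with respect to arc length has the form $\diff\mu=g(t)\,\diff t$ for a unique $g\in L^1(\R_+,\diff t)$: writing $\diff\mu=h(s)\,\diff s$ with $h\in L^1(\R_+,\diff s)$, the relation $\diff s=\sqrt{1+t^{-4}}\,\diff t$ gives $g(t)=h(s(t))\sqrt{1+t^{-4}}$ with $\|g\|_{L^1(\diff t)}=\|h\|_{L^1(\diff s)}$. Thus $\mu\mapsto g$ is a linear bijection between the space of arc-length absolutely continuous finite measures on $\Gamma_+$ and $L^1(\R_+,\diff t)$. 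Substituting $\alpha=2$ into the Fourier transform formula \eqref{eq-FT}, the vanishing of $\widehat{\mu}$ on $\Lambda_{2,\beta}$ reads exactly
\[
\int_{\R_+}g(t)\,\e^{2\imag m\pi t}\,\diff t=\int_{\R_+}g(t)\,\e^{\imag n\pi\beta/t}\,\diff t=0,\qquad m,n\in\Z.
\]
Under the standard duality $L^\infty(\R_+)=L^1(\R_+)^{\ast}$, these are precisely the equations defining the pre-annihilator of the spanning family of $\mathcal{N}_\beta$ in $L^1(\R_+)$. Hence the bijection $\mu\mapsto g$ identifies $\mathrm{AC}(\Gamma_+;\Lambda_{2,\beta})$ with this pre-annihilator.

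By the bipolar (equivalently Hahn--Banach) theorem, the codimension of the weak-star closure of $\mathcal{N}_\beta$ in $L^\infty(\R_+)$ equals the dimension of its pre-annihilator. Combining with the identification above gives
\[
\operatorname{codim}_{L^\infty(\R_+)}\overline{\mathcal{N}_\beta}^{\,w^{\ast}}
=\dim\mathrm{AC}(\Gamma_+;\Lambda_{2,\beta}).
\]
Applying Theorem~\ref{theobranch} with $\alpha=2$, the right-hand side vanishes precisely when $2\beta<4$, i.e.\ $\beta<2$, and equals $1$ when $2\beta=4$, i.e.\ $\beta=2$; this yields both assertions of Theorem~\ref{tpo100}.

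The hard content is entirely contained in Theorem~\ref{theobranch}, which we may assume. Consequently, no genuine obstacle remains in the present argument; the only point deserving care is the change of variables from arc-length densities on $\Gamma_+$ to $L^1$-densities on $\R_+$ with respect to $\diff t$. Since the Jacobian factor $\sqrt{1+t^{-4}}$ is positive, locally integrable on $\R_+$ and bounded below by $1$, the correspondence is a genuine linear bijection that preserves the relevant $L^1$-structure, and the duality step goes through without further subtlety.
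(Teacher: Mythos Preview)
Your proposal is correct and follows precisely the route the paper indicates: the paper presents Theorem~\ref{tpo100} explicitly as the duality reformulation of Theorem~\ref{theobranch} with $\alpha=2$, invoking the identification of $\mathrm{AC}(\Gamma_+;\Lambda_{2,\beta})$ with the pre-annihilator $\mathcal{N}_\beta^\bot$ (cf.\ \eqref{aniquilatorN}) and the general fact that the codimension of the weak-star closure equals the dimension of the pre-annihilator. You have simply spelled out the details of this passage, including the arc-length--to--$\diff t$ change of variables, that the paper leaves implicit.
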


\begin{thm}\label{tpo1001}
Let $\mathcal{N}_\beta$ be the linear subspace of $L^\infty(\R_+)$
spanned by the functions $x\mapsto\e^{\imag 2m\pi x}$ and
$x\mapsto\e^{\imag n\pi\beta/x}$, where $m,n$ range over the integers and 
$\beta$ is a fixed positive real. Then the weak-star closure of
$\mathcal{N}_\beta$ has infinite codimension in $L^\infty(\R_+)$ for $\beta>2$.
\end{thm}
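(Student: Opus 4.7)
The plan is to derive Theorem \ref{tpo1001} from Theorem \ref{theop100} by a standard duality argument, exactly analogous to how the paper extracts Theorem B${}^\prime$ (i.e., \ref{tpo100}) from Theorem \ref{theobranch}. Since $L^\infty(\R_+)$ is the dual of $L^1(\R_+)$, the codimension of the weak-star closure of $\mathcal{N}_\beta$ in $L^\infty(\R_+)$ equals the dimension of the annihilator
\[
\mathcal{N}_\beta^{\perp} \;=\; \Bigl\{ f \in L^1(\R_+): \int_{\R_+} f(x)\,\e^{\imag 2 m\pi x}\,\diff x=0, \int_{\R_+}f(x)\,\e^{\imag n\pi\beta/x}\,\diff x=0 \text{ for all } m,n\in\Z\Bigr\}.
\]
So it suffices to prove that $\mathcal{N}_\beta^{\perp}$ is infinite-dimensional when $\beta>2$.

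Next I would exhibit an isomorphism between $\mathcal{N}_\beta^{\perp}$ and $\mathrm{AC}(\Gamma_+;\Lambda_{2,\beta})$. Parametrize the branch $\Gamma_+$ by $t\mapsto(t,1/t)$ for $t>0$, so that the arc-length element becomes $\sqrt{1+t^{-4}}\,\diff t$. A measure $\mu\in\mathrm{AC}(\Gamma_+;\Lambda_{2,\beta})$ has the form $\diff\mu=g(t)\sqrt{1+t^{-4}}\,\diff t$ with $g\sqrt{1+t^{-4}}\in L^1(\R_+)$, and the conditions $\widehat\mu(2m,0)=\widehat\mu(0,n\beta)=0$ translate, after setting $f(t):=g(t)\sqrt{1+t^{-4}}$, into exactly the two orthogonality relations defining $\mathcal{N}_\beta^{\perp}$. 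This bijection $\mu\leftrightarrow f$ is linear and preserves linear-independence, hence preserves dimension.

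Finally, since $\alpha=2$ and $\beta>2$ gives $\alpha\beta>4$, Theorem \ref{theop100} asserts that $\mathrm{AC}(\Gamma_+;\Lambda_{2,\beta})$ is infinite-dimensional; pushing this through the isomorphism above yields $\dim \mathcal{N}_\beta^{\perp}=\infty$ and completes the proof. The only potential obstacle is purely book-keeping: one must verify that the parametrization sends finite absolutely continuous measures on $\Gamma_+$ exactly onto $L^1(\R_+)$-functions on the positive axis, which is immediate because $\sqrt{1+t^{-4}}$ is bounded away from $0$ and $\infty$ on compact subsets of $(0,\infty)$ and asymptotically behaves like $1$ at infinity and like $t^{-2}$ at $0$, so no integrability is lost in either direction. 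The real content of Theorem \ref{tpo1001} therefore lies entirely in Theorem \ref{theop100}, whose proof (based on the Perron-Frobenius operator for the classical Gauss map and its spectral gap on BV) is the substantive work.
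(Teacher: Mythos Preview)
Your duality reduction is correct and is precisely the equivalence the paper states when it calls Theorem~\ref{tpo1001} a ``reformulation'' of Theorem~\ref{theop100}. The identification $\mathcal{N}_\beta^\perp\cong\mathrm{AC}(\Gamma_+;\Lambda_{2,\beta})$ via the parametrization $t\mapsto(t,1/t)$ is exactly right, and the integrability bookkeeping you sketch is fine.

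The problem is circularity. In the paper's logical structure, Theorem~\ref{theop100} has no independent proof: it is established \emph{via} Theorem~\ref{tpo1001}, not the other way around. The paper's actual proof of Theorem~\ref{tpo1001} is the one-line deduction from Theorem~\ref{thm-8.6}, which constructs an explicit bounded extension operator $\Eop_+:\BV([1,\gamma])\to L^1([0,+\infty[)$ whose range lies in $\mathcal{N}_{2\gamma}^\perp$ and is infinite-dimensional (here $\beta=2\gamma$). That construction---parallel to Theorem~\ref{thm-8.2} for the full hyperbola---uses Proposition~\ref{necsuf2} to characterize $\mathcal{N}_{2\gamma}^\perp$ in terms of the Perron-Frobenius operator $\Pop_+=\Pop_{\sigmat_\gamma}$, and then exploits the spectral gap of $\Pop_+$ on $\BV([0,1])$ to invert $\Iop-\Zop_+^2$ via a Neumann series. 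Only \emph{after} this is Theorem~\ref{theop100} obtained, by the very duality you invoke.

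So your proposal, as written, assumes what the paper is trying to prove. You correctly identify where the substantive work lies (the Perron-Frobenius spectral gap), but that work is carried out directly on the $\mathcal{N}_\beta^\perp$ side, not on the measure-space side; citing Theorem~\ref{theop100} here is not legitimate.
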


\noindent By general Functional Analysis, the codimension of the weak-star 
closure of ${\mathcal{M}_\beta}$ equals the dimension of its pre-annihilator 
space
\begin{equation}
\label{aniquilator}
\mathcal{M}_\beta^\bot=\Big\{f\in L^1(\R):\int_\R
f(x)\e^{\imag n\pi x}\,\diff x=\int_\R f(x)\e^{\imag n\pi\beta/x}\,\diff x=0
\,\,\,\text{for all}\,\,\, n\in\Z\Big\}.
\end{equation}
Likewise, the codimension of the weak-star closure of ${\mathcal{N}_\beta}$ 
equals the dimension of its pre-annihilator space 
\begin{equation}
\label{aniquilatorN}
\mathcal{N}_\beta^\bot=\Big\{f\in L^1(\R_+):\int_{\R_+}
f(x)\e^{\imag 2n\pi x}\,\diff x=\int_{\R_+} f(x)\e^{\imag n\pi\beta/x}\,
\diff x=0\,\,\,\text{for all}\,\,\, n\in\Z\Big\}.
\end{equation}

\noindent If $f\in  \mathcal{N}_\beta^\bot$, then it is easy to see
that the function $g(x)=f(\frac12x)$, extended to vanish along the negative 
semi-axis $\R_-$, belongs to $\mathcal{M}_{2\beta}^\bot$. So, Theorems 
\ref{tpo} and \ref{tpo1001} show that there are elements in
$\mathcal{M}_\beta^\bot$ with support on the positive semi-axis precisely 
when $\beta\geq 4$.

\begin{cor} In the pre-annihilator $\mathcal{M}_{\beta}^\bot$ there exists 
a non-trivial element that vanishes on $\R_-$ if and only if $\beta\ge4$.
Moreover, if $\beta=4$, there is only a one-dimensional subspace of
such elements, while if $\beta>4$, there is an infinite-dimensional subspace
with this property.
\end{cor}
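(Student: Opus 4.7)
The plan is to deduce the corollary directly from Theorems \ref{tpo100} and \ref{tpo1001} by means of the elementary bijection already indicated in the paragraph immediately preceding the corollary. Let $\Xi_\beta$ denote the subspace of $\mathcal{M}_\beta^\bot$ whose elements vanish identically on $\R_-$; the statement is then a dimension count for $\Xi_\beta$.

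First, I would verify that the linear map $T\colon f\mapsto g$, defined by $g(x)=f(x/2)$ for $x>0$ and $g(x)=0$ for $x\le 0$, is a bijection from $\mathcal{N}_{\beta/2}^\bot$ onto $\Xi_\beta$. The forward inclusion $T(\mathcal{N}_{\beta/2}^\bot)\subset\Xi_\beta$ is the observation recorded just above the corollary. For the reverse, given $g\in\Xi_\beta$, define $f(x)=g(2x)$ for $x>0$; the substitution $y=2x$ together with $g|_{\R_-}=0$ yields
\begin{equation*}
\int_{\R_+} f(x)\,\e^{\imag 2n\pi x}\,\diff x=\frac12\int_{\R} g(y)\,\e^{\imag n\pi y}\,\diff y=0,
\end{equation*}
and likewise
\begin{equation*}
\int_{\R_+} f(x)\,\e^{\imag n\pi(\beta/2)/x}\,\diff x=\frac12\int_{\R} g(y)\,\e^{\imag n\pi\beta/y}\,\diff y=0,
\end{equation*}
for every $n\in\Z$, since $g\in\mathcal{M}_\beta^\bot$. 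Hence $f\in\mathcal{N}_{\beta/2}^\bot$ and $T(f)=g$, so $T$ is surjective onto $\Xi_\beta$; injectivity is immediate, and the correspondence also preserves $L^1$-integrability up to the factor $2$.

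Second, I would read off the dimension of $\mathcal{N}_{\beta/2}^\bot$ from the previously proved results. By Theorem \ref{tpo100}, we have $\mathcal{N}_{\beta/2}^\bot=\{0\}$ when $\beta/2<2$, i.e.\ $\beta<4$, and $\dim\mathcal{N}_{\beta/2}^\bot=1$ when $\beta/2=2$, i.e.\ $\beta=4$. By Theorem \ref{tpo1001}, $\dim\mathcal{N}_{\beta/2}^\bot=\infty$ when $\beta/2>2$, i.e.\ $\beta>4$. Transporting these three cases through the bijection $T$ yields $\Xi_\beta=\{0\}$ for $\beta<4$, $\dim\Xi_\beta=1$ for $\beta=4$, and $\dim\Xi_\beta=\infty$ for $\beta>4$, which is precisely the content of the corollary.

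Since the argument is essentially a linear change of variables, no genuine obstacle arises in this step; the real work has already been packaged into Theorems \ref{tpo100} and \ref{tpo1001}.
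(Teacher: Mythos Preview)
Your proposal is correct and follows exactly the approach the paper sketches in the paragraph immediately preceding the corollary: the change of variables $g(x)=f(x/2)$ identifies $\Xi_\beta$ with $\mathcal{N}_{\beta/2}^\bot$, and the dimension count is then read off from the results on $\mathcal{N}_\gamma^\bot$. You have simply made explicit the reverse direction of the bijection, which the paper left to the reader, and you correctly invoke Theorem~\ref{tpo100} for the cases $\beta<4$ and $\beta=4$ (the paper's reference to ``Theorems~\ref{tpo} and~\ref{tpo1001}'' in that paragraph appears to be a slip of the pen; Theorem~\ref{tpo100} is what is actually needed for the vanishing and one-dimensional cases).
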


\begin{rem}
In the context of Theorems \ref{tpo} and \ref{tpo1001}, we actually construct
rather concrete infinite-dimensional subspaces of $\mathcal{M}_\beta^\bot$
and $\mathcal{N}_\beta^\bot$, respectively; cf. Theorems \ref{thm-8.2} and
\ref{thm-8.6}.
\end{rem}

\end{subsection}

\begin{subsection}{Discussion about harmonic extension and the codimension 
problem}
If $\Gamma$ is the hyperbola $x_1x_2=1$, then for $\beta>1$ the bounded 
harmonic extensions to the upper half-plane of the functions 
$x\mapsto\e^{\imag m\pi x}$ and $x\mapsto\e^{\imag n\pi\beta/x}$, where 
$m,n$ range over the integers $\Z$,  fails to separate all the points of
the upper half-plane $\C_+:=\{z\in\C:\,\im z>0\}$. Indeed, if we consider
\[
z_1:=m+\imag\sqrt{\frac{\beta}{mn}-1},\quad 
z_2:=-m+i\sqrt{\frac{\beta}{mn}-1},\quad\mbox{where}\quad m,n\in\Z_+,
\quad mn<\beta,
\]
then, $f(z_1)=f(z_2)$ for every $f\in\mathcal{M}_\beta$, so that the
differences of Poisson kernels $P_{z_z}-P_{z_2}$ are elements in the
pre-annihilator space $\mathcal{M}_\beta^\bot$. If we use the Cauchy kernel in 
place of the Poisson kernel here we also obtain elements of the 
pre-annihilator. But there are only finitely many combinations of 
$m,n\in\Z_+$ with $mn<\beta$, which corresponds to finitely many differences
of Poisson or Cauchy kernels. This would lead us to suspect that that the 
pre-annihilator $\mathcal{M}_\beta^\bot$ might be finite-dimensional. Theorem
\ref{tpo} shows that this is far from being true. 
\end{subsection}

\begin{subsection}{Structure of the paper}
In Section \ref{ecdirac}, we take a closer look at the link between 
Theorem \ref{theop} and the Klein-Gordon and Dirac equations. In order to 
make the paper accessible to a wider audience, we present in Section 
\ref{preliminaries} the elementary aspects of the theory of dynamical systems 
and the standard notation for Perron-Frobenius operators needed here.
In Section \ref{Branch}, we show how the theory of Perron-Frobenius operators 
is the natural tool to analyze Heisenberg uniqueness pairs for the hyperbola
$\Gamma$ and for one of its branches $\Gamma_+$. In particular, the famous 
Gauss-Kuzmin-Wirsing operator corresponds to critical density case for 
$\Gamma_+$. In Section \ref{Moreabout}, we state some more involved results 
of the theory of Perron-Frobenius operators which are needed later on. 
In Section \ref{notation}, we study the structure of the pre-annihilator space 
$\mathcal{M}_\beta^\bot$ associated with $\Gamma$. In Section \ref{euimf}, 
we show the existence and uniqueness of absolutely continuous invariant 
measure for certain transformations acting on the interval $[-1,1]$. 
This is the key point in the proof of Theorem \ref{tpo}, presented in 
Section \ref{pruebas}. We end Section \ref{pruebas} by sketching the proof 
of Theorem \ref{tpo1001}, which turns out to be much simpler than that of 
Theorem \ref{tpo}. Finally, in Section \ref{applic}, we apply our results
to a problem involving the linear span of powers of two atomic singular 
inner functions in the Hardy space of the unit disk. 
In conclusion, we can say that the study of Heisenberg 
uniqueness pairs related to the Klein-Gordon equation leads to new and 
interesting problems involving Perron-Frobenius operators.
\end{subsection}

\begin{subsection}{Acknowledgements}
We thank Michael Benedicks for enlightening discussions on Perron-Frobenius
operators. 
\end{subsection}
\end{section}

\begin{section}{Further motivation. The Klein-Gordon and Dirac equations}
\label{ecdirac}

\begin{subsection}{The Dirac equation in three spatial dimensions}
In quantum mechanics the evolution of the position wave-function
$\psi$ associated to a physical system can be modelled by
certain partial differential equations (PDE). 
According to the theory of spin, in the general setting, $\psi$ has four 
components,
\[
\psi=(\psi_1,\psi_2,\psi_3,\psi_4),
\]
which should be thought of as written in column form, where each 
$\psi_j=\psi_j(t,x_1,x_2,x_3)$ is a mapping between an open set in $\R^4$ 
and a prescribed Hilbert space. Thus, these PDE's have to be understood 
as a system of equations for four separate wave-functions. The necessity of
working with multiple-component wave-functions was pointed out by
Pauli in order to understand the intrinsic angular momentum (spin)
of atoms. There is not a general equation whose solutions reflect
faithfully the evolution of a given system from a relativistic point
of view. Depending on the features of the system one must choose one
or another type of equation. For instance, for a relativistic spin-0
particle with rest-mass $m_0$ we have the Klein-Gordon equation.
Written in natural units it takes the form,
\[
\big(\partial_t^2 -\partial_{x_1}^2-\partial_{x_2}^2-\partial_{x_3}^2
+m_0^2\big)\psi =0.
\]
Another example, perhaps the most important in this context, is the
Dirac equation. It is used to describe the wave-function of the
electron, although it remains valid when applied to a general
relativistic spin-$\frac12$ particle. In natural units it takes the form,
\[
\big(-\imag\gamma^0\partial_t-\imag\gamma^1\partial_{x_1}
-\imag\gamma^2\partial_{x_2}-\imag\gamma^3\partial_{x_3}+m_0\big)\psi=0,
\]
commonly abbreviated as $(-\imag\not\!\partial+m_0)\psi=0$, where
$\gamma^0,\gamma^1,\gamma^2,\gamma^3$, the Dirac matrices, are the
$4\times 4$ matrices given by
\[
\gamma^0=\left(
             \begin{array}{cccc}
               1 & 0 & 0 & 0 \\
               0 & 1 & 0 & 0 \\
               0 & 0 & -1 & 0 \\
               0 & 0 & 0 & -1 \\
             \end{array}
           \right),\,\,\,
           \gamma^1=\left(
             \begin{array}{cccc}
               0 & 0 & 0 & 1 \\
               0 & 0 & 1 & 0 \\
               0 & -1 & 0 & 0 \\
               -1 & 0 & 0 & 0 \\
             \end{array}
           \right),
\]
\[
           \gamma^2=\left(
             \begin{array}{cccc}
               0 & 0 & 0 & -\imag \\
               0 & 0 & \imag & 0 \\
               0 & \imag & 0 & 0 \\
               -\imag & 0 & 0 & 0 \\
             \end{array}
           \right),\,\,\,
           \gamma^3=\left(
             \begin{array}{cccc}
               0 & 0 & 1 & 0 \\
               0 & 0 & 0 & -1 \\
               -1 & 0 & 0 & 0 \\
               0 & 1 & 0 & 0 \\
             \end{array}
           \right).
\]
The algebraic properties of the matrices $\gamma^0,\gamma^1,\gamma^2,\gamma^3$
allow us to obtain the following factorization of the Klein-Gordon equation:
\[
\big(\partial_t^2-\partial_{x_1}^2-\partial_{x_2}^2-\partial_{x_3}^2
+m_0^2\big)
\psi=(\imag\not\!\partial
+m_0)(-\imag\not\!\partial +m_0)\psi=0.
\]
Hence a solution to the Dirac equation is always a solution to the 
Klein-Gordon equation. The converse statement is not true.
\end{subsection}

\begin{subsection}{The Dirac equation in one spatial dimension}

\noindent As before, let $\Gamma$ be the hyperbola $x_1x_2=1$, and suppose 
$\mu\in\mathrm{AC}(\Gamma;\Lambda_{\alpha,\beta})$ for some positive reals
$\alpha,\beta$. Then, in view of \eqref{ecdp}, the Fourier transform 
$\widehat\mu$ given by \eqref{eq-FT} solves the partial differential equation
\[
(\partial_{x_1}\partial_{x_2}+\pi^2)\widehat\mu=0
\]
in the sense of distribution theory. If we write
$\psi(t,x):=\widehat{\mu}\left(\frac12(t+x),\frac12(t-x)\right)$, then
$\psi$ solves the one-dimensional Klein-Gordon equation for a particle of
mass $\pi$,
\begin{equation}
\label{kleina}
(\partial_t^2-\partial_x^2+\pi^2)\psi=0.
\end{equation}
Theorem \ref{theop} asserts that if $\alpha\beta>1$, there exists an 
infinite-dimensional space of solutions $\psi$ to \eqref{kleina} of the 
given form, subject to the condition of vanishing on
\[
\Lambda'_{\alpha,\beta}=\big\{(m\alpha,m\alpha)\in\R^2:n\in\Z\big\}\cup
\big\{(n\beta,-n\beta)\in\R^2:n\in\Z\big\}.
\]
The corresponding Dirac equation in this context is
\begin{equation}
\label{diraca}
\left(-\imag\,\sigma^0 \partial_t-\imag\,\sigma^1\partial_x+\pi\right)\psi=0,
\end{equation}
where $\sigma^0,\sigma^1$ are the $2\times2$ matrices given by 
\[
\sigma^0=\left(
             \begin{array}{cc}
               1 & 0 \\
               0 & -1 \\
             \end{array}
           \right)\quad\mbox{and}\quad\sigma^1=\left(
             \begin{array}{cc}
               0 & 1 \\
               -1 & 0 \\
             \end{array}
           \right).
\]
Here, $\psi=(\psi_1,\psi_2)$ in column form, and \eqref{diraca} may be 
written out more explicitly as the system
\begin{equation}
\label{bscoy}
\begin{cases}
-\imag\partial_t\psi_1-\imag\partial_x\psi_2+\pi\psi_1=0,&\\
\imag\partial_t\psi_2+\imag\partial_x\psi_1+\pi\psi_2=0.&
\end{cases}
\end{equation}
The question pops up whether the Dirac equation \eqref{diraca}
has an infinite-dimensional space of solutions $\psi=(\psi_1,\psi_2)$
that vanish on $\Lambda'_{\alpha,\beta}$ for $\alpha\beta>1$. As both
$\psi_1,\psi_2$ automatically solve the Klein-Gordon equation (this is a 
consequence of the factorization we mentioned previously in the context of
three spatial dimensions), the natural requirement
is that both $\psi_1,\psi_2$ are Fourier transform of measures in
$\mathrm{AC}(\Gamma',\Lambda'_{\alpha,\beta})$. Here, 
$\Gamma'$ is the hyperbola $t^2=x^2+1$, which corresponds to $\Gamma$ after
the change of variables. From the assumptions made on $\psi_1,\psi_2$, 
we have that
\[
\psi_j(t,x)=\int_{-\infty}^{+\infty}
f_j(\tpar)\,\e^{\imag\frac12\pi[\tpar(t+x)+\tpar^{-1}(t-x)]}
\diff\tpar,\qquad j=1,2,
\]
where $f_1,f_2$ belong to $\mathcal{M}_\beta^\bot$ (this subspace of $L^1(\R)$
is defined by \eqref{aniquilator}). Note that in the last step, we tacitly 
imposed the normalizing assumption that $\alpha=1$. 
As we implement this representation of $\psi_1,\psi_2$ into 
\eqref{bscoy}, we find that
\begin{equation*}
\int_{-\infty}^{+\infty}
\left\{(\tpar+\tpar^{-1}+2)f_1(\tpar)+(\tpar-\tpar^{-1})f_2(\tpar)\right\}
\e^{\imag\frac12\pi[\tpar(t+x)+\tpar^{-1}(t-x)]}\diff\tpar=0
\end{equation*}
and
\begin{equation*}
\int_{-\infty}^{+\infty}
\left\{(\tpar+\tpar^{-1}-2)f_2(\tpar)+(\tpar-\tpar^{-1})f_1(\tpar)\right\}
\e^{\imag\frac12\pi[\tpar(t+x)+\tpar^{-1}(t-x)]}\diff\tpar=0.
\end{equation*}
As we plug in $t=x$, we see from the uniqueness theorem for the Fourier 
transform that the above two equations are equivalent to having
\[
(\tpar+\tpar^{-1}+2)f_1(\tpar) +(\tpar-\tpar^{-1})f_2(\tpar)=0,\qquad 
\tpar\in\R,
\]
and
\[
(\tpar-\tpar^{-1})f_1(\tpar)+(\tpar+\tpar^{-1}-2)f_2(\tpar)=0,\qquad\tpar\in\R,
\]
in the almost-everywhere sense. These requirements are compatible, as each one
corresponds to having
\[
f_2(\tpar)=\frac{1+\tpar}{1-\tpar}\,f_1(\tpar),\qquad \tpar\in\R.
\]
This means that we have reduced the study of the dimension of the space of
solutions to the Dirac equation \eqref{diraca} subject to vanishing on  
$\Lambda'_{\alpha,\beta}$ (with $\alpha=1$) plus the condition in terms of
the Fourier transform to simply analyzing the dimension of the space 
\[
\big\{f\in\mathcal{M}_\beta^\bot:\,(1+x)(1-x)^{-1}f(x)\,\,\,
\text{is in}\,\,\,\mathcal{M}_\beta^\bot\big\}.
\]
To answer this dimension question we would need to better understand the 
structure of the pre-annihilator space $\mathcal{M}_\beta^\bot$.
\end{subsection}
\end{section}

\begin{section}{Perron-Frobenius operators}
\label{preliminaries}

\begin{subsection}{Dynamical systems}
The theory of dynamical systems deals with the time evolution of a
system of points under a fixed change rule. An important feature of
a dynamical system is its attractors, sets of points towards which
the points of the system converge. A dynamical system is a
four-tuple $(I,\sigmaalg,\mu,\tau)$, where $(I,\sigmaalg,\mu)$ is a
measure space and $\tau:I\longrightarrow I$ is a measurable
transformation. The measure $\mu$ is always positive and $\sigma$-finite; 
if it has finite total mass we renormalize and assume that the mass is $1$,
so that $\mu$ becomes a probability measure. We denote by $\tau^0$ the 
identity map and write $\tau^n=\tau^{n-1}\circ\tau$ for $n=1,2,3,\ldots$. 
The evolution of a point $x\in I$ is described by its {\em orbit} under 
$\tau$, i.e., the sequence
\[
\{\tau^n(x)\}_{n=0}^{+\infty}.
\]
In a concrete situation, the actual expression for the iterates $\tau^n$ 
tends to explode already for rather modest values of $n$, which makes it 
extremely difficult to extract substantial information based a direct 
approach. The most convenient approach is then the measure-theoretic one 
based on Perron-Frobenius operators. If we have a random variable 
$X:I\longrightarrow \R$ distributed according to a density $\rho$, then the 
random variable
$X\circ\tau$ will be distributed according to a new density, which
is denoted by $\Pop_\tau\rho$. Instead of the orbits
$\tau^n(x)$, we focus on the sequence of density functions
\[
\{\Pop_\tau^n \rho\}_{n=0}^{+\infty}.
\]
The key point here is that, while $\tau$ may be nonlinear and discontinuous, 
the operator $\Pop_\tau$ is linear and bounded on the space 
$L^1(I,\sigmaalg,\mu)$ of integrable functions on $I$. The operator 
$\Pop_\tau$ is known as the Perron-Frobenius operator associated to 
the transformation $\tau$. It turns out that in most situations the 
sequence of density functions $\Pop^n_\tau\rho$ converges to  certain 
densities of measures on $I$ that provide valuable information about the
attractors of the system, which are known as {\em invariant measures}. More
precisely, a $\sigma$-finite Borel measure $\nu$ on $I$ is said to be 
{\em invariant} under $\tau$ if $\nu(\tau^{-1}(A))=\nu(A)$, for every 
$A\in\sigmaalg$. The densities of the $\mu$-absolutely continuous invariant 
measures can be recovered as eigenfunctions of the Perron-Frobenius operator 
corresponding to the eigenvalue $\lambda=1$. Perron-Frobenius operators 
appear in many
branches of pure and applied mathematics, such as in stochastic processes,
statistical mechanics, resonances, ordinary differential equations,
thermodynamics, diffusion problems, positive matrices, and algorithms 
associated with continued fractions expansions. For a backgound on 
Perron-Frobenius operators, we refer to, e.g., see \cite{blank}, \cite{boy},
\cite{driebe}.  In this work, we shall see how Perron-Frobenius
operators are intimately related to the Heisenberg uniqueness pairs associated
with the hyperbola $x_1x_2=1$. This leads to new and interesting questions 
concerning this important class of operators.
\end{subsection}

\begin{subsection}{Perron-Frobenius operators on bounded intervals}
\label{subsec-PFBI}
In  our situation, the  dynamical systems involved are of the form
$(I,\borelalg_I,m,\tau)$, where $I$ is a closed bounded interval of the real 
line, $m$ is the Lebesgue measure defined on $\borelalg_I$, the Borel 
$\sigma$-algebra of $I$, and $\tau$ denotes a measurable map from  $I$ into 
itself. For $1\leq p<+\infty$, the Banach  space $L^p(I)$ consists of those 
measurable complex-valued (equivalence classes of) functions $f$ defined on 
$I$ for which the norm
\[
\|f\|_{L^p(I)}^p=\int_I|f|^p\, \diff m
\]
is finite. The space $L^\infty(I)$ consists of the essentially bounded 
measurable complex-valued functions $f$ supplied with the essential supremum 
norm. We shall use the following standard bilinear dual action:
\begin{equation}
\label{eq-duality}
\langle f,g\rangle_E:=\int_{E}fg\,\diff m,
\end{equation}
provided $f,g$ are Borel measurable, and $fg\in L^1(E)$. Here, $E\subset\R$
is a Borel set with positive linear measure: $m(E)>0$. For instance, if
$f\in L^1(E)$ and $g\in L^\infty(E)$, the dual action is well-defined.
When needed, we shall think of functions in $f\in L^p(E)$ as extended to
all of $\R$ by setting them equal to $0$ off $E$. 

We shall need the following concepts.

\begin{defn}
The map $\tau:I\to I$ is said to be a {\em filling $C^2$-smooth piecewise 
monotonic transformation} if there exists a countable collection of
pairwise disjoint open intervals $\{I_u\}_{u\in\Uset}$, such that

\noindent
{$(i)$} the set $I\setminus\cup\{I_u:\,u\in\Uset\}$ has
linear Lebesgue measure $0$,

\noindent
{$(ii)$} for any $u\in\Uset$, the restriction of $\tau$ to 
$I_u$ is strictly monotonic and extends to a $C^2$-smooth function on the
closure of $I_u$, denoted $\tau_u$, and $\tau_u'\ne0$ holds in the interior of 
$I_u$,

\noindent
{$(iii)$} for every $u\in\Uset$, $\tau_u$ maps the closure of 
$I_u$ onto $I$.
\label{def-fpw}
\end{defn}

\begin{defn} If, in the setting of Definition \ref{def-fpw}, all conditions are
fulfilled, save that $(iii)$  is replaced by the weaker condition $(iii')$ 
below, we say that $\tau$ is a {\em partially filling $C^2$-smooth piecewise 
monotonic transformation}:

\noindent{$(iii')$} there exists a $\delta>0$, such that for every
$u\in\Uset$, the length of the interval $\tau(I_u)$ is $\ge\delta$.
\label{def-pfpw}
\end{defn}

In the context of the above two definitions, each intervals $I_u$ is known
as a {\em fundamental interval}, and $\tau_u$ is said to be a {\em branch}.
It is an important observation that each iterate $\tau^n$, with 
$n=1,2,3,\ldots$, has the same basic structure as the transformation $\tau$ 
itself. The fundamental intervals associated with $\tau^n$ are given by
\[
I_{(u_1,\ldots,u_n)}^n=\big\{x\in I:x\in I_{u_1},\tau(x)\in 
I_{u_2},\ldots,\tau^{n-1}(x)\in I_{u_n}\big\},\qquad 
(u_1,\ldots,u_n)\in\Uset^n_\sharp,
\]
where $\Uset^n_\sharp$ consists of those elements 
$(u_1,\ldots,u_n)\in\Uset^n$ such that the above interval 
$I_{(u_1,\ldots,u_n)}^n$ becomes non-empty.
The corresponding branch on $I_{(u_1,...,u_n)}^n$ is denoted
$\tau_{(u_1,\ldots,u_n)}^n=\tau_{u_n}\circ\cdots\circ\tau_{u_1}$.

The {\em Koopman operator} associated with $\tau$ is the
composition operator which acts on $L^\infty(I)$ by the formula 
$\Cop_\tau g=g\circ\tau$. Clearly, $\Cop_\tau$ is linear 
and norm-contractive on $L^\infty(I)$. bounded. The
Perron-Frobenius operator $\Pop_\tau:\,L^1(I)\to L^1(I)$ 
associated with $\tau$ is just the pre-adjoint of $\Cop_\tau$. 
Therefore, $\Pop_\tau$ is a norm contraction on $L^1(I)$ with
\begin{equation}\label{preadjunto}
\langle\Pop_\tau f,g\rangle_I=\langle f,\Cop_\tau g\rangle_I,
\qquad f\in L^1(I),\,\,\, g\in L^\infty(I).
\end{equation}
It is immediate from \eqref{preadjunto} that an absolutely continuous measure 
$\diff\mu_f=f\,\diff m$ is has the invariance property
\[
\mu_f(\tau^{-1}(A))=\mu(A)\quad\text{for all}\,\,\,A\in\borelalg_I
\]
if and only if
\begin{equation}\label{invariantef}
\Pop_\tau f=f.
\end{equation}
It is clear that $\Cop_\tau^n=\Cop_{\tau^n}$, so, by duality
we have that
\begin{equation}
\label{iteradada}
\Pop_\tau^n=\Pop_{\tau^n},\qquad n=1,2,3,\ldots.
\end{equation}
Using \eqref{iteradada}, we find that
\begin{equation}\label{retardo}
(\Pop_\tau f)(x)=\sum_{u\in\Uset}
J_u(x)f(\tau_u^{-1}(x)),\qquad n=1,2,3,\ldots,
\end{equation}
where $J_u\ge0$ is the function on $I$ that equals
$|(\tau_u^{-1})'|$ on $\tau(I_u)$ and vanishes elsewhere.
The map $J_u$ is well defined, since $\tau$ is piecewise strictly
monotonic. By \eqref{retardo} we see that $\Pop_\tau f\geq 0$
for $f\geq 0$ and $\Vert \Pop f \Vert_1=\Vert f \Vert_1$. 
As $\Pop_\tau$ acts contractively on $L^1(I)$, its spectrum 
$\sigma(\Pop_\tau)$ is contained in the closed unit disk $\bar\D$.
\end{subsection}
\end{section}

\begin{section}{Perron-Frobenius operators for Gauss-type maps and invariant
measures}
\label{Branch}

\begin{subsection}{The Gauss-type maps and the corresponding Perron-Frobenius
operators}
\label{subsec-GTM1}
For $t\in\R$, let $\{t\}_1$ be the number in the interval $[0,1[$ such that
$t-\{t\}_1\in\Z$. We also need the expression $\{t\}_2$, which is in the 
interval $]\!-\!1,1]$ and is uniquely determined by the requirement $t-\{t\}_2
\in2\Z$. Next, for $0<\gamma<+\infty$, we consider the Perron-Frobenius 
operator for the transformation $\sigmat_\gamma(x):=\{\gamma/x\}_1$ on the 
unit interval $[0,1[$ (we put $\sigmat_\gamma(0):=0$ to define the map at the 
origin), which is given by 
\begin{equation}
\label{gauss}
\Pop_{\sigmat_\gamma} f(x)=\sum_{j=1}^{+\infty}\frac{\gamma}{(j+x)^2} 
f\left(\frac{\gamma}{j+x}\right), \qquad x\in [0,1[,
\end{equation} 
for $f \in L^1([0,1[)$, with the understanding that $f$ vanishes off $[0,1[$. 
It is easy to see that the eigenfunction equation
\[
\Pop_{\sigmat_\gamma} f=\lambda f,\qquad |\lambda|=1,
\]
fails to have a solution $f$ in $L^1([0,1[)$ for $0<\gamma<1$. In the
case $\gamma=1$, $\Pop_{\sigmat_1}$ is the famous {\em Gauss-Kuzmin-Wirsing} 
operator, which is connected with the continued fraction algorithm. 
It is known that $\Pop_{\sigmat_1}f=\lambda f$ with $|\lambda|=1$ has a 
non-trivial solution only for $\lambda=1$, in which case the solution $f$ 
is unique (up to a scalar multiple). These observations are basic in
the proof of Theorem \ref{theobranch} (or, which is the same, Theorem 
\ref{tpo100}); the natural parameter choices are $\alpha=2$ and 
$\beta=2\gamma$.

For $0<\beta<+\infty$, we may instead consider the Perron-Frobenius operator 
for the transformation $\tau_\beta(x):=\{-\beta/x\}_2$ on the interval 
$]\!-\!1,1]$ (we put $\tau_\beta(0):=0$ to define the map at the origin), 
which is given by
\begin{equation}
\label{eq-gauss-var}
\Pop_{\tau_\beta}f(x)=\sum_{j\in\Z^\times} \frac{\beta}{(2j-x)^2}
f\left(\frac{\beta}{2j-x}\right),\qquad x\in ]\!-\!1,1], 
\end{equation}
for $f\in L^1(]\!-\!1,1])$, with the understanding that $f$ vanishes off 
$]\!-\!1,1]$. 
Here, we use the standard notation 
$\Z^\times:=\Z\setminus\{0\}$. A rather elementary argument shows that the 
eigenfunction equation
\[
\Pop_{\tau_\beta} f=\lambda f,\qquad |\lambda|=1,
\]
fails to have a solution $f$ in $L^1(]\!-\!1,1])$ for $0<\beta<1$. 
For $\beta=1$,
the transformation $\tau_1(x)=\{-1/x\}_2$ is related to the continued fraction
algorithm with even partial quotients, cf. \cite{Kraa}, \cite{Brigit}. The
map $\tau_1(x)=\{-1/x\}_2$ has an indifferent fixed point at $1$. This entails
that the invariant absolutely continuous density has infinite total mass;
in this case, the density is given explicitly by $(1-x^2)^{-1}$. Using some
ergodicity properties, it is easy to show that the equation 
$\Pop_{\tau_1}f=\lambda f$ fails to have solutions $f$ in $L^1(]\!-\!1,1])$
for all $\lambda\in\C$ with $|\lambda|=1$. These observations are basic in
the proof of Theorem \ref{thh} ( or, which is the same, Theorem \ref{thhh}).
\end{subsection}

\begin{subsection}{Discrete and singular invariant measures for the 
Gauss map}
\noindent It is well-known that the Gauss map $\sigmat_1(x):=\{1/x\}_1$
on the unit interval $[0,1[$ [with $\sigmat_1(0):=0$] 
has infinitely many essentially different invariant measures. However, 
up to a constant multiple, there is only
one that is absolutely continuous: $(1+x)^{-1}\diff x$. 
As for the discrete bounded invariant measures for $\sigmat_1$ there is an easy
description. We write $\delta_a$ for the Dirac measure at point $a$. We shall 
need the set of fixed points of iterates of the Gauss map $\sigmat_1$:
\[
\setS_k:=\{a\in[0,1[:\,\sigmat_1^k(a)=a\}, \qquad\setS_\infty:=
\bigcup_{k=1}^{+\infty}\setS_k.
\]
For $a\in\setS_\infty$, there exists a minimal $k\ge1$ such that 
$\sigmat_1^k(a)=a$; we write $k(a)$ for this $k$. We put
\[
\rho_a:=\frac{1}{k(a)}\sum_{j=0}^{k(a)-1}\delta_{\sigmat_1^j(a)},\qquad 
a\in\setS_\infty.
\] 

\begin{thm}
\label{surdsos} 
Let $\mu$ be a discrete bounded invariant measure for the Gauss map 
$\sigmat_1$. 
Then there is a function $\xi:\setS_\infty\to\C$ with
$\sum_{a\in\setS_\infty}|\xi(a)|<+\infty$, such that
\begin{equation}\label{discrto}
\mu=\sum_{a\in\setS_\infty}\xi(a)\,\rho_a.
\end{equation}
\end{thm}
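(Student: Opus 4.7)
The plan is to translate $\sigmat_1$-invariance of the purely atomic measure $\mu$ into the pointwise balance equation
\[
c(y)=\sum_{x\in\sigmat_1^{-1}(y)}c(x),\qquad c(x):=\mu(\{x\}),
\]
supplemented by the summability $\sum_x|c(x)|=\|\mu\|<+\infty$, and then to deduce that $c$ must be supported on $\setS_\infty$ and constant on each periodic orbit. Granting this, a period-$k$ orbit $\mathcal{O}=\{a_0,\dots,a_{k-1}\}$ contributes $kc(a_0)\rho_{a_0}$ to $\mu$, so setting $\xi(a):=c(a)$ for $a\in\setS_\infty$ yields $\sum_{a\in\mathcal{O}}\xi(a)\rho_a=\mu|_{\mathcal{O}}$ with $\sum_{a\in\setS_\infty}|\xi(a)|=\|\mu\|$, as required.

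The crucial preliminary step is to show that $|\mu|$ is also $\sigmat_1$-invariant, so that support questions may be handled by positive-measure techniques. Taking absolute values in the balance equation and summing over $y$ produces
\[
\|\mu\|=\sum_y|c(y)|\le\sum_y\sum_{x\in\sigmat_1^{-1}(y)}|c(x)|=\sum_x|c(x)|=\|\mu\|,
\]
and equality throughout forces the balance relation to hold for $|c|$ pointwise. With $|c|$ thus on a positive footing, the relation $|c(\sigmat_1(x))|\ge|c(x)|$ gives monotonicity along forward orbits; an infinite forward orbit of distinct points in the support would house infinitely many atoms of mass at least $|c(x)|>0$, contradicting summability, so every support point is pre-periodic.

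For a period-$k$ orbit $\{a_0,\dots,a_{k-1}\}$, the monotonicity chain $|c(a_0)|\ge|c(a_{k-1})|\ge\cdots\ge|c(a_0)|$ forces equality, and the balance equation at each $a_j$ then compels every non-orbit preimage to have mass zero; propagating backward along forward orbits eliminates every strictly pre-periodic atom, so $\supp\mu\subseteq\setS_\infty$. Returning to the original complex $c$: within $\setS_\infty$, the only periodic preimage of $a_j$ is $a_{j-1}$, since any other periodic preimage would place $a_j$ into a second orbit. The balance equation therefore collapses to $c(a_j)=c(a_{j-1})$, forcing $c$ to be constant on each orbit, which concludes the argument. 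I expect the main obstacle to be the triangle-inequality reduction showing $|\mu|$ is invariant; once that is in place, the remainder is a routine combination of the monotonicity argument and the orbit-by-orbit balance.
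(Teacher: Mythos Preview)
Your argument is correct and takes a genuinely different route from the paper's proof. The paper invokes the ergodic decomposition of a discrete invariant measure and then applies the Birkhoff--Khinchin ergodic theorem to an ergodic discrete invariant probability measure $\rho$: evaluating the ergodic averages of $1_{\{x_0\}}$ at a support point $x_0$ forces $x_0$ to be periodic, and ergodicity then identifies $\rho$ with $\rho_{x_0}$. Your approach bypasses both the ergodic decomposition and Birkhoff's theorem entirely, working instead with the pointwise balance equation $c(y)=\sum_{x\in\sigmat_1^{-1}(y)}c(x)$ and the triangle-inequality reduction to $|\mu|$. The monotonicity $|c(\sigmat_1(x))|\ge|c(x)|$ together with summability replaces the recurrence information that the paper extracts from Birkhoff. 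Your route is more elementary and fully self-contained (no appeal to the ergodic theorem or to a decomposition whose details for complex discrete measures are not spelled out in the paper), and it handles the complex-valued case directly; the paper's route, on the other hand, makes the ergodic-theoretic structure explicit and shows transparently why the ergodic pieces are exactly the orbit measures $\rho_a$. One small point worth stating explicitly in your write-up: the step ``the only periodic preimage of $a_j$ is $a_{j-1}$'' is justified because any periodic $b$ with $\sigmat_1(b)=a_j$ has $a_j$ in its orbit, hence $b$ lies in the orbit of $a_j$, and the restriction of $\sigmat_1$ to that orbit is a cyclic permutation.
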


\begin{proof}
That the measure $\mu$ is invariant means that 
\[
\int_{[0,1[}f\circ\sigmat_1(x)\diff\mu(x)=
\int_{[0,1[}f(x)\,\diff\mu(x)
\]
for every $f$ integrable with respect to $|\mu|$. It is trivial to check that
all measures of the given form are invariant. 
In the other direction, it is well-known that every discrete invariant 
measure $\mu$ may be decomposed into irreducible (ergodic) parts. 
We just need to show that up to a multiplicative constant, each irreducible 
part is of the form $\rho_a$.
So, let $\rho$ be an ergodic discrete invariant probability measure on 
$[0,1[$. Let $E\subset[0,1[$ be the minimal countable set which carries the 
mass of $\rho$. By the Birkhoff-Khinchin Ergodic Theorem, we have for each 
$x_0\in E$ that
\[
\frac{1}{n}\sum_{j=0}^{n-1} f(\sigmat_1^j(x_0))\to\int_{E}f\diff\rho\quad
\text{as}\,\,\,n\to+\infty.
\]
In particular, if we let $f$ equal the characteristic function of the one-point
set $\{x_0\}$, and observe that from the minimality of $E$, we have 
$\rho(\{x_0\})>0$, we find that approximately a $\rho(\{x_0\})$ proportion 
of the time on the interval $0\le j\le n-1$, we have 
$\sigmat_\gamma^j(x_0)=x_0$. 
This means that $x_0\in\setS_\infty$.
We may also conclude by picking other functions $f$ that $E$ equals the orbit 
of $x_0$:
\[
E=\{\sigmat_\gamma^j(x_0):\,0\le j\le k(x_0)-1\}.
\]
By ergodicity, each point of $E$ must have equal mass, so that 
$\rho=\rho_a$ with $a=x_0$.
This completes the proof.
\end{proof}

\begin{rem} $(a)$ The proof of Theorem \ref{surdsos} did not really use 
the fact that we are dealing with the Gauss map. In particular, the 
corresponding assertion holds for the transformation $\tau_1(x)=\{-1/x\}_2$ 
in place of the Gauss map.

\noindent{$(b)$} The set $\setS_\infty$ consists of the quadratic surds plus
the origin.

\noindent{$(c)$} The discrete measures provided by Theorem \ref{surdsos} 
above extend in a natural fashion to the positive half-axis $\R_+$. These
extensions lift to the hyperbola branch $\Gamma_+$ where $x_1x_2=1$ and 
$x_1>0$, and the lifted measures on $\Gamma_+$ have Fourier transforms that
vanish on the lattice-cross $\Lambda_{\alpha,\beta}$ with $\alpha=\beta=2$.
Indeed, this is the only way to obtain such discrete measures on $\Gamma_+$.
\end{rem}
\end{subsection}

\begin{subsection}{The Minkowski measure}
The most studied  singular continuous measure for the
Gauss-Kuzmin-Wirsing operator  is the Minkowski measure. It belongs to a family
of singular probability measures which we call Markovian measures (see
below).
The Minkowski question mark function was first introduced by Minkowski
in 1904. Let $\{ a_n(x)\}_{n=1}^{+\infty}$ be the sequence of positive integers
in the continued fraction expansion of $x$. Salem \cite{Salem} proved that 
the question mark function can be defined by
\[
?(x)= \sum_{j=1}^{+\infty} \frac{2(-1)^{j+1}}{2^{a_1(x)+a_2(x)+\cdots
+a_j(x)}}, \quad \quad  0\leq x \leq 1,
\]
where the series is finite for rational $x$. Then $?$ is a strictly 
increasing continuous singular function. It
takes the rational numbers into the dyadic numbers and the
quadratic surds into the rationals. The Riemann-Stieltjes measure
$\diff?$ on $[0,1]$, is then a singular continuous probability
measure, which can be shown to be invariant for the Gauss map 
$\sigmat_1(x)=\{1/x\}_1$.
The numbers $a_j(x)\in\Z_+$ are the successive remainders which we throw
away as we iterate the Gauss map at a point $x$. Let us say that a 
probability measure $\mu$ on $[0,1]$ is {\em Markovian} with respect to the 
Gauss map if there are numbers $q(j)$ with $0\le q(j)<1$ and
\[
\sum_{j=1}^{+\infty}q(j)=1,
\]
such that the $\mu$-mass of the ``cylinder set'' 
\[
\big\{x\in[0,1[:\,a_j(x)=b_j\,\,\,\text{for}\,\,\, j=1,\ldots,k\big\}
\]
equals
\[
\prod_{j=1}^{k}q(b_j).
\]
Then the Minkowski measure is Markovian with $q(j)=2^{-j}$.
Problem (d) in \cite{hh} could be settled in the negative if we could find
a singular continuous measure on $\Gamma$, the Fourier transform of which
tends to zero at infinity, while it vanishes along $\Lambda_{\alpha,\beta}$
with $\alpha=\beta=1$. 
A perhaps easier task is to find a singular invariant measure on $[0,1]$
with respect to the Gauss map such that the Fourier transform tends to zero
at infinity (measures whose Fourier transforms decay to $0$ at infinity are
called {\em Rajchman measures}). The Markovian measures are all invariant 
and singular continuous. E.g., the Minkowski measure is of this type. But it is
not known if it is Rajchman. Indeed, this question is a well-known problem 
raised by Salem \cite{Salem}.
\end{subsection}
\end{section}

\begin{section}{Further properties of Perron-Frobenius operators}
\label{Moreabout}

\begin{subsection}{The spectral decomposition of Perron-Frobenius operators}
Let $I$ be a closed bounded interval. 
The total variation of a complex-valued function
$h:I\longrightarrow\C$ is
\[
\mbox{var}_I(h)=\sup\bigg\{\sum_{j=1}^{n-1}|h(t_{j+1})-h(t_{j})|\bigg\},
\]
where the supremum is taken over all $t_1,\ldots,t_n\in I$ with
$t_1<\ldots<t_n$.  The function
$h$ is said to be of {\em bounded variation} when $\mbox{var}_I(h)<+\infty$.
We will denote by  $\BV(I)$ the subspace of $L^1(I)$ functions, which
has a representative of bounded variation. The space $\BV(I)$ becomes
 a Banach space when supplied, e.g., with the norm
\begin{equation}\label{normada}
\|h\|_{\BV}=\|h\|_1+\inf_{\tilde{h}\sim
h}\mbox{var}_I(\tilde{h}), \quad \quad h \in\BV(I),
\end{equation}
where the infimum is taken over the elements in the equivalence class of $h$
(so that $\tilde h=h$ except on a Lebesgue null set). 
It is well-known that for each $h\in\BV(I)$  there is a
right-continuous function in the class of $h$ where the infimum in
the definition of $\|h\|_{\BV}$ in \eqref{normada} is attained. In
particular, $BV(I)$ is a subspace of $L^\infty(I)$, see \cite{Ios}. A perhaps
more precise description of $\BV(I)$ is that these are the primitive 
functions of the finite complex-valued Borel measures on $I$. 

Let $\T=\{z\in\C:|z|=1\}$ denote the unit circle in $\C$ and let 
$\Spec_{\mathrm{p}}(\Pop_\tau)$ denote the point spectrum of
$\Pop_\tau$, where the Perron-Frobenius operator $\Pop_\tau$ 
is thought of as acting on $L^1(I)$. As  a consequence of the
Ionescu-Tulcea and Marinescu theorem, see \cite{boy} and \cite{Ios}, Section 
5.3, the following spectral decomposition holds for $\Pop_\tau$.
We recall the notions of filling and partially filling $C^2$-smooth monotonic
transformations in Definitions \ref{def-fpw} and \ref{def-pfpw}.

\begin{thml}\label{tf}
Suppose $\tau:I\to I$ is a partially filling $C^2$-smooth piecewise monotonic 
trans\-formation with the follow\-ing properties $(i)$--$(ii)$:
\\
\noindent
$(i)$ {\rm[uniform expansiveness]}  There exist and integer $m\ge1$ and
a positive real $\epsilon$ such that \hfill\break
$|(\tau^{m})'(x)|\ge1+\epsilon$ for all 
$x\in\cup \{I_u:\,u\in\Uset^m_\sharp\}$.

\noindent
$(ii)$ {\rm[second derivative condition]} 
There exists a positive constant $M$ such that
$|\tau''(x)|\le M|\tau'(x)|^2$ for all 
$x\in\cup\{I_u:\,u\in\Uset\}$.
\\
\noindent
Then $\Lambda_\tau:=
\Spec_{\mathrm{p}}(\Pop_\tau)\cap\T$ is finite and non-empty, say 
$\Lambda_\tau=\{\lambda_1,\ldots,\lambda_p\}$.
Here, one of the eigenvalues is the point $1$, say $\lambda_1=1$. 
If $E_i$ denotes the eigenspace of
$\Pop_\tau$ corresponding to $\lambda_i$, then $E_i$ is
finite-dimensional and $E_i\subset \BV(I)$. In addition, we have
\[
\Pop_\tau^{n}h=\sum_{i=1}^p\lambda_i^n 
\Pop_{\tau,i}h+\Zop_\tau^{n}h,\qquad h\in L^1(I),\,\,\,n=1,2,3,\ldots,
\]
where the operators $\Pop_{\tau,i}$ are the projections onto $E_i$, and
the operator $\Zop_\tau$ acts boundedly on on $L^1(I)$ as well as on
$\BV(I)$. Moreover, the spectrum of $\Zop_\tau$ as an operator
acting on $\BV(I)$ is contained in the open unit disk $\D$; i.e.,
$\Zop_\tau$ acting on $\BV(I)$ has spectral radius $<1$. 
\end{thml}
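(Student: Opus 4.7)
The plan is to establish a Lasota-Yorke type inequality for an iterate of $\Pop_\tau$ on $\BV(I)$ and then apply the Ionescu-Tulcea-Marinescu theorem to get quasi-compactness, from which the stated spectral decomposition follows directly. The target is the existence of $n_0\ge1$, a constant $C>0$, and $r\in(0,1)$ with
\begin{equation*}
\var_I\!\bigl(\Pop_\tau^{n_0}h\bigr)\le r\,\var_I(h)+C\,\|h\|_{L^1(I)},\qquad h\in\BV(I).
\end{equation*}
Since the embedding $\BV(I)\hookrightarrow L^1(I)$ is compact and $\Pop_\tau$ is bounded on both spaces, this inequality puts the Ionescu-Tulcea-Marinescu machinery (as presented in \cite{Ios}, \S5.3) directly at our disposal, yielding all parts of the conclusion at once: the peripheral spectrum $\Lambda_\tau$ is finite and consists of eigenvalues of finite multiplicity whose eigenspaces sit inside $\BV(I)$, and the complementary operator $\Zop_\tau$ has spectral radius strictly smaller than $1$ on $\BV(I)$.

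To derive the Lasota-Yorke inequality, I would start from the iterated form of \eqref{retardo}, namely
\begin{equation*}
(\Pop_\tau^n h)(x)=\sum_{\mathbf u\in\Uset^n_\sharp}J_{\mathbf u}^{(n)}(x)\,h\bigl((\tau_{\mathbf u}^n)^{-1}(x)\bigr),
\end{equation*}
where $J_{\mathbf u}^{(n)}$ equals $\bigl|((\tau_{\mathbf u}^n)^{-1})'\bigr|$ on $\tau^n(I_{\mathbf u}^n)$ and vanishes elsewhere. The total variation of this sum splits into interior contributions and boundary jumps. The expansion hypothesis $(i)$ gives $\sup_{\mathbf u}\|J_{\mathbf u}^{(km)}\|_\infty\le(1+\epsilon)^{-k}$, while $(ii)$ yields a uniform distortion bound $\bigl|(J_{\mathbf u}^{(n)})'\bigr|\le M'\,J_{\mathbf u}^{(n)}$ independent of $\mathbf u$ and $n$; after change of variables, the interior part is then dominated by $(1+\epsilon)^{-k}\var(h)+M'\|h\|_1$. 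For the boundary jumps, each is at most $J_{\mathbf u}^{(n)}\|h\|_\infty$ evaluated at an endpoint of $\tau^n(I_{\mathbf u}^n)$, and the lower length bound $\delta$ from Definition \ref{def-pfpw}$(iii')$ ensures that each point of $I$ is covered by only $O(\delta^{-1})$ image intervals, so the full boundary sum is bounded by $C'(1+\epsilon)^{-k}\|h\|_\infty$; a standard $\BV$-interpolation $\|h\|_\infty\le\var(h)+|I|^{-1}\|h\|_1$ absorbs this into the target form, and $n_0=km$ with $k$ large enough then yields $r<1$.

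Once quasi-compactness is established, the decomposition $\Pop_\tau^n h=\sum_{i=1}^{p}\lambda_i^n\Pop_{\tau,i}h+\Zop_\tau^n h$ is just the Riesz-Dunford functional calculus applied to the isolated peripheral eigenvalues, with $\Pop_{\tau,i}$ the associated Riesz projection and $\Zop_\tau$ the restriction of $\Pop_\tau$ to the complementary spectral subspace. That $1\in\Lambda_\tau$ is obtained by Ces\`aro-averaging the constant density: the sequence $f_N:=N^{-1}\sum_{n=0}^{N-1}\Pop_\tau^n\mathbf 1$ has $L^1$-norm equal to $|I|$ (since $\Pop_\tau$ preserves the $L^1$-norm of non-negative functions) and, by iterating the Lasota-Yorke inequality, uniformly bounded total variation; the compact embedding then extracts an $L^1$-cluster point $f^\star\in\BV(I)$ satisfying $\Pop_\tau f^\star=f^\star$ with $\|f^\star\|_1=|I|$. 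The hardest step is undoubtedly the boundary-term estimate: with only countably many partially-filling branches, summing jumps branch-by-branch is hopeless, and everything hinges on exchanging that sum for a sum over endpoints of the image intervals, each point of $I$ being met only $O(\delta^{-1})$ times thanks to Definition \ref{def-pfpw}$(iii')$, which is precisely what replaces the cardinality arguments of the classical finite-branch Lasota-Yorke setting.
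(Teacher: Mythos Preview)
Your overall strategy is exactly the standard one, and it is in fact what the paper relies on: Theorem~\ref{tf} is not proved in the paper at all, but quoted from the literature (specifically \cite{Ios}, \S5.3, together with \cite{boy}, \cite{bugiel}, \cite{CFS}; see Remark~\ref{rem-5.1}$(d)$). So there is no ``paper's own proof'' to compare against beyond the remark that the Lasota--Yorke inequality plus the Ionescu-Tulcea--Marinescu theorem is precisely the route those references take. Your treatment of the interior term, the distortion bound coming from $(ii)$, and the Ces\`aro argument for $1\in\Lambda_\tau$ are all fine.

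There is, however, a genuine gap in your boundary-term estimate. The assertion that the lower bound $|\tau(I_u)|\ge\delta$ forces each point of $I$ to lie in only $O(\delta^{-1})$ image intervals is false whenever $\Uset$ is infinite: indeed $\sum_{u\in\Uset}|\tau(I_u)|\ge\delta\cdot\#\Uset=+\infty$, so almost every point is covered infinitely often (for the Gauss-type maps in the paper every point is covered by \emph{every} complete branch). Consequently your bound $C'(1+\epsilon)^{-k}\|h\|_\infty$ for the full boundary sum does not follow. A second, related issue is that you apply condition $(iii')$ to the iterate $\tau^{km}$, but $(iii')$ is only assumed for $\tau$ itself; the images $\tau^n(I^n_{\mathbf u})$ need not have length bounded below by the same $\delta$.

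The fix is to avoid the global $\|h\|_\infty$ altogether and work branch by branch. For each $u$ the jump at an endpoint of $\tau(I_u)$ is at most $\sup_{\tau(I_u)}J_u\cdot\sup_{I_u}|h|$. The distortion bound from $(ii)$ gives $\sup J_u\le C\,|I_u|/|\tau(I_u)|\le C\,|I_u|/\delta$, while the elementary $\BV$ estimate $\sup_{I_u}|h|\le\var_{I_u}(h)+|I_u|^{-1}\int_{I_u}|h|$ then yields
\[
\sum_{u}\sup J_u\cdot\sup_{I_u}|h|\;\le\;\Bigl(\sup_u\sup J_u\Bigr)\var_I(h)\;+\;\frac{C}{\delta}\,\|h\|_{L^1(I)}.
\]
This produces a Lasota--Yorke inequality for $\Pop_\tau$ with some constant $A$ in front of $\var_I(h)$; one then iterates the inequality (rather than working directly with $\Pop_\tau^{km}$) and uses the expansion $(i)$ to drive the effective contraction factor below $1$. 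This is the argument carried out in \cite{Ios}, \S5.3, and it is what makes the countably-many-branches, partially-filling case go through.
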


\begin{rem}
\label{rem-5.1}
$(a)$ It is a by-product of Theorem \ref{tf} that $\Pop_\tau$ acts
boundedly on $\BV(I)$. In fact, the way things work is that this 
rather elementary observation is the beginning of the analysis that leads
up to Theorem \ref{tf}.

\noindent{$(b)$} Since $1$ is an eigenvalue of $\Pop_\tau$, the
corresponding eigenfunction (which is then in $\BV(I)$) is the density
for an invariant measure. If there are several such eigenfunction for 
$\lambda_1=1$, then one of them is $\ge0$, which we can normalize so that we 
get an absolutely continuous invariant probability measure with density in 
$\BV(I)$; compare with the proof of Theorem \ref{lemff}. 
 
\noindent{$(c)$} By a theorem of Rota (see \cite{boy}, \cite{Sch}), 
the exterior eigenvalues $\{\lambda_1,\ldots,\lambda_p\}$ form 
a finite union of cyclic subgroups of $\T$. In particular, they are all roots 
of unity, that is, $\lambda_i^N=1$ holds for all $i=1,\ldots,p$, for some big 
enough positive integer $N$.

\noindent{$(d)$} The formulation of the Ionescu-Tulcea and Marinescu
theorem in \cite{Ios} initially assumes that $\tau$ is ``filling'', but 
it is later remarked that the theorem holds for ``partially filling''
transformations (cf. Definitions \ref{def-fpw} and \ref{def-pfpw});
see \cite{Ios}, p. 214, and also \cite{bugiel} and \cite{CFS}, 
p. 169.  

\noindent{$(e)$} When considered as an operator on $L^1(I)$, the 
Perron-Frobenius operator $\Pop_{\tau}$ will usually have eigenvalues at 
all points of the open disk, with eigenfunctions in $L^\infty([-1,1])$ (cf.
\cite{Kel}). 
\end{rem}

\begin{rem} \label{rem-5.2}
From the presentation in \cite{Ios}, Section 5.3, it is clear that if $\tau$
is ``filling'', we have a stronger assertion in Theorem \ref{tf}: 
$\lambda_1=1$ is the only eigenvalue of $\Pop_\tau$ on $\T$,
and that the $\tau$-invariant absolutely continuous probability measure 
is unique, with a density that is bounded from above and below by two 
positive constants. Cf. also \cite{CFS}, p. 172, where it is shown that under 
the given assumptions, $\tau$ is {\em mixing}. 
We briefly outline the argument, following the presentation in \cite{Ios},
Section 5.3. We write $f_u:=\tau_u^{-1}:I\to \bar I_u$
for the inverse branches ($u\in\Uset$).
The assumptions $(i)$ and $(ii)$ of Theorem  correspond to the 
conditions $(\mathrm{E}_m)$ and (A) of \cite{Ios}, pp. 191--192. Next, by
\cite{Ios}, Proposition 5.3.3, we have that $(\mathrm{E}_m)$ forces 
$|f'_u(x)|$ to be uniformly bounded in $x\in I$ and $u\in\Uset$. 
In view of Condition (A) of \cite{Ios}, p. 192, we also know that 
$|f''_u(x)|$ is uniformly bounded in $x\in I$ and $u\in\Uset$. 
In particular, then, $f'_u$ is absolutely continuous for all 
$u\in\Uset$. Next, by \cite{Ios}, Proposition 5.3.4, we use this 
absolute continuity together with condition $(\mathrm{E}_m)$ of \cite{Ios},
p. 191, to see that condition (C) [R\'enyi's distortion estimate] holds as 
well. Next, by \cite{Ios}, Theorem 4.3.5, we use condition (C) to get that
there is a unique ergodic $\tau$-invariant absolutely continuous probability
measure, and that its density is bounded from above and below by positive 
constants. This means that there is only one eigenvalue, namely $1$. We
mention here that the condition (BV) of \cite{Ios}, p. 200 -- which requires
the sum of the variation of $f_u'$ over $u\in\Uset$ to be bounded 
-- is a trivial consequence of condition (A), as the sum of the variances of 
$f_u'$ over $u\in\Uset$ amounts to summing the lengths of the intervals $I_u$, 
which all add up to the length of $I$. 
\end{rem}


\end{subsection}

\begin{subsection}{The Folklore theorem (Adler's theorem)} 

We shall be interested in partially filling $C^2$-smooth monotonic 
transformations of a closed bounded interval $I$. In view of Remark 
\ref{rem-5.1} $(d)$, we can be assured that Theorem \ref{tf} holds also
in this more general situation. Moreover, Remark \ref{rem-5.1} $(b)$
tells us that there exists a $\tau$-invariant absolutely continuous 
probability measure, but it might not be unique. To get uniqueness, we need
to make stronger conditions on $\tau$. In this direction we have Adler's 
Theorem, also known as the Folklore theorem (see \cite{boy}).

\begin{thml}[\textrm{Adler's theorem}]
\label{adlt} 
Let $\tau$ be a partially filling $C^2$-smooth piecewise monotonic 
transformation with the following properties $(i)$--$(iv)$: 
\\
$(i)$ {\rm[uniform expansiveness]} There exist an integer $m\ge1$ and a 
positive real $\epsilon$ such that $|(\tau^m)'(x)|\ge1+\epsilon$ for all 
$x\in\cup\{I_u:\,u\in\Uset^n_\sharp\}$. 

\noindent
$(ii)$ {\rm[second derivative condition]} 
There exists a positive constant $M$ such that
$|\tau''(x)|\le M|\tau'(x)|^2$ for all 
$x\in\cup\{I_u:\,u\in\Uset\}$.

\noindent
$(iii)$ {\rm [Markov property 1]}
For every $u\in\Uset$ there is $n=n(u)\geq 1$ such that 
$\mathrm{clos}[\tau^n(I_u)]=I$.

\noindent
$(iv)$ {\rm [Markov property 2]} 
Whenever $\tau(I_u)\cap I_v\neq\emptyset$ holds for two indices 
$u,v\in\Uset$, then $\tau(I_u)\supset I_v$.
\medskip

\noindent
Then $\tau$ admits a unique absolutely continuous invariant probability 
measure $\diff\rho=\varrho\diff m$. Moreover, the density $\varrho$ is 
bounded from above and below by positive constants.
\end{thml}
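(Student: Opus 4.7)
The plan is to build on Theorem \ref{tf} to secure the existence of an absolutely continuous invariant probability measure, and then exploit the Markov properties $(iii)$--$(iv)$ to upgrade this to uniqueness together with a two-sided bound on the density. First, I would invoke Theorem \ref{tf} directly: conditions $(i)$ and $(ii)$ match the hypotheses of that theorem verbatim, so the peripheral point spectrum $\Lambda_\tau$ of $\Pop_\tau$ acting on $L^1(I)$ is finite, non-empty, and contains $\lambda_1=1$. The corresponding eigenspace $E_1$ sits inside $\BV(I)$ and is finite-dimensional, and by Remark \ref{rem-5.1}$(b)$ one can extract a non-negative representative $\varrho\in\BV(I)$ whose normalization yields a $\tau$-invariant absolutely continuous probability measure $\diff\rho=\varrho\diff m$.

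Second, following the programme indicated in Remark \ref{rem-5.2}, I would derive R\'enyi's distortion estimate from $(i)$ and $(ii)$: a constant $D>0$ such that for every $n\ge1$, every $(u_1,\ldots,u_n)\in\Uset^n_\sharp$, and every pair $x,y$ in the image $\tau^n(I^n_{(u_1,\ldots,u_n)})$,
\[
D^{-1}\le\frac{|(\tau^n)'(\tau^{-n}_{(u_1,\ldots,u_n)}(x))|}
{|(\tau^n)'(\tau^{-n}_{(u_1,\ldots,u_n)}(y))|}\le D.
\]
Combining this with the $n$-fold iteration of \eqref{retardo} for $\Pop_\tau^n=\Pop_{\tau^n}$ and with the identity $\Pop_\tau^n\varrho=\varrho$, I would obtain the comparison $\varrho(x)\le D^2\varrho(y)$ whenever $x$ and $y$ both lie in a common image interval $\tau^n(I^n_{(u_1,\ldots,u_n)})$.

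Third, the Markov property $(iii)$ guarantees that for each $u\in\Uset$ there exists $n(u)\ge1$ with $\mathrm{clos}[\tau^{n(u)}(I_u)]=I$, while property $(iv)$ ensures that the family $\{\tau(I_u)\}_u$ respects the partition $\{I_v\}_v$ in the sense of a genuine Markov partition. A standard topological transitivity argument then yields: given any two subintervals $J_1,J_2\subset I$ of positive length, there exist $n\ge 1$ and $(u_1,\ldots,u_n)\in\Uset^n_\sharp$ with $I^n_{(u_1,\ldots,u_n)}\subset J_1$ and $\tau^n(I^n_{(u_1,\ldots,u_n)})\supset J_2$. Plugging this into the pointwise distortion bound propagates the comparison to arbitrary pairs in $I$, producing positive constants $0<c<C<+\infty$ with $c\le\varrho\le C$ almost everywhere on $I$. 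For uniqueness, any second $\tau$-invariant absolutely continuous probability density would differ from $\varrho$ by a non-trivial mean-zero element of $E_1$; a standard positivity argument (from $|\Pop_\tau f|\le\Pop_\tau|f|$ combined with $\|\Pop_\tau|f|\|_1=\||f|\|_1$) shows that both its positive and negative parts are themselves non-negative invariant functions, and the two-sided comparison just established forces each of them to be either identically zero or bounded below by a positive constant a.e.\ on $I$. Since their supports are disjoint, one must vanish, which forces the two densities to agree.

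The main obstacle will be the combinatorial step in the third paragraph: extracting a usable form of topological transitivity from the two Markov conditions in the \emph{partially} filling setting, so that the local distortion estimate upgrades to a uniform two-sided comparison on all of $I$. Because the partition $\{I_u\}_{u\in\Uset}$ may be countably infinite and one lacks the full filling property $\tau(I_u)=I$, condition $(iv)$ must be used repeatedly to track which image intervals are reached under iteration. The analysis here parallels that of topologically mixing subshifts of finite type, but the possibly non-finite alphabet and the only partial control of the image intervals demand some extra care.
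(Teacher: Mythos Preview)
The paper does not supply its own proof of Theorem \ref{adlt}; it is stated there as a known result (the ``Folklore theorem'') with a reference to \cite{boy}, and is used as a black box. So there is no paper proof to compare against.

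That said, your sketch is a reasonable outline of how this result is established in the literature, and the ingredients you list---existence via Theorem \ref{tf}, the R\'enyi distortion estimate from $(i)$--$(ii)$, propagation of the local comparison via the Markov structure, and the uniqueness argument based on $|\Pop_\tau f|\le\Pop_\tau|f|$---are the standard ones. One small clarification in your uniqueness step: from $\Pop_\tau f=f$ and $\|\Pop_\tau|f|\|_{L^1}=\||f|\|_{L^1}$ you get $\Pop_\tau|f|=|f|$, and then $f^\pm=\tfrac12(|f|\pm f)$ are each invariant; only then does the two-sided density bound force each $f^\pm$ to be either identically zero or strictly positive a.e., and disjointness of their supports finishes the argument. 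You identified the genuine technical point correctly: in the partially filling, countable-alphabet setting, the passage from the Markov conditions $(iii)$--$(iv)$ to the transitivity statement needed for the global comparison requires some bookkeeping, and this is exactly where the work in the cited references lies.
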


\begin{rem}
$(a)$ 
A transformation $\tau$ satisfying $(iii)$--$(iv)$ above is said
to be a {\em Markov map}. 

\noindent{$(b)$} A well-known result which preceded Adler's theorem is the 
Lasota and Yorke theorem \cite{Las}. 
\end{rem}
\end{subsection}

\begin{subsection}{Dynamical properties of the Gauss-type maps}
\label{subsec-dyn}

We first consider the transformation $\tau_\beta$ of the interval 
$]\!-\!1,1]$, as defined by $\tau_\beta(0):=0$ and by
\begin{equation}
\label{ecuatotal}
\tau_\beta(x):=\left\{-\frac{\beta}{x}\right\}_2,\qquad x\ne0.
\end{equation}
Here, we recall that for $t\in\R$, $\{t\}_2$ denotes the uniquely determined 
number in $]\!-\!1,1]$ with with $t-\{t\}_2\in 2\Z$. We {\em restrict our
attention to $\beta>1$ only}. Let the index set $\Uset=\Uset_\beta$ be the 
subset of the non-zero integers $u$ for which the corresponding branch 
interval is non-empty: 
\begin{equation}
\label{eq-Iu}
I_u:=\bigg]\frac{\beta}{2u+1},\frac{\beta}{2u-1}\bigg[\cap\, 
]-1,1[\ne\emptyset.
\end{equation}
We put $u_0=u_0(\beta):=\frac{1}{2}(\beta-\{\beta\}_2)$, which is an 
integer $\ge1$. 
We note that if $\beta$ is an {\em odd integer}, then 
\begin{equation}
\label{eq-Iu1}
I_u=\bigg]\frac{\beta}{2u+1},\frac{\beta}{2u-1}\bigg[,\qquad u\in\Uset,
\end{equation}
and $\Uset$ consists of all $u\in\Z^\times$ with 
$|u|\ge\frac12(\beta+1)$. In this case, the ``filling'' requirement is
fulfilled: $\tau_\beta(I_u)=]-1,1[$ holds for all $u\in\Uset$. 
More generally, when $\beta$ is not an odd integer, then $\Uset$ consists of
all $u\in\Z^\times$ with $|u|\ge u_0$, and we have
\begin{equation}
I_u=\bigg]\frac{\beta}{2u+1},\frac{\beta}{2u-1}\bigg[,\qquad u\in\Uset
\setminus\{\pm u_0\}.
\label{eq-Iu2}
\end{equation}
so that 
\[
\tau_\beta(I_u)=]-1,1[,\qquad u\in\Uset
\setminus\{\pm u_0\}.
\]
We see that the deviation from the ``filling'' requirement is rather slight
(just two branches fail). 

Next, we quickly calculate the derivative of $\tau_\beta$:
\begin{equation}
\tau'_\beta(x)=\frac{\beta}{x^2}\ge\beta>1,\qquad x\in\cup
\{I_u:\,u\in\Uset\},
\label{eq-expand}
\end{equation}
so the uniform expansiveness condition is met already by $\tau_\beta$ (with
$m=1$). Moreover, 
\[
\frac{|\tau''_\beta(x)|}{|\tau'_\beta(x)|^2}\le \frac{2|x|}{\beta}\le 2,
\qquad x\in\cup\{I_u:\,u\in\Uset\},
\]
so we also have the second derivative control.
Unfortunately, we cannot rely on Theorem \ref{tf} to give us the uniqueness 
of the absolutely continuous invariant measure for $\tau_\beta$,  as it 
apparently allows for non-uniqueness (although Remark \ref{rem-5.2} says that 
in the ``filling'' case we really do have the needed uniqueness). We also
cannot rely on  Adler's theorem, as $\tau_\beta$ is not necessarily a Markov 
map. We will check that condition $(iii)$ of Adler's theorem holds for all
sufficiently big $n$, say $n\ge n(u)$. 

We are also interested in the Gauss-type map $\sigmat_\gamma$ of the interval 
$[0,1[$, as defined by $\sigmat_\gamma(0):=0$ and
\begin{equation}
\label{ecuatotal2}
\sigmat_\gamma(x):=\left\{\frac{\gamma}{x}\right\}_1,\qquad x\ne0.
\end{equation}
Here, we recall that $\{t\}_1$ is the fractional part of $t\in\R$, with 
values in $[0,1[$ and $t-\{t\}_1\in\Z$. 
We {\em restrict our attention to $\gamma>1$ only}. Let the index set 
$\Vset=\Vset_\gamma$ be the subset of the positive integers $v$ for which the 
corresponding branch interval is non-empty: 
\[
J_v:=\bigg]\frac{\gamma}{v+1},\frac{\gamma}{v}\bigg[\cap\, 
]0,1[\ne\emptyset.
\]
We note that if $\gamma$ is an integer, then $\Vset$ consists of all positive
integers $v$ with $v\ge\gamma$, and  
\[
J_v=\bigg]\frac{\gamma}{v+1},\frac{\gamma}{v}\bigg[,\qquad v\in\Vset.
\]
More generally, if $v_0=v_0(\gamma):=\gamma-\{\gamma\}_1\in\Z_+$, then
\[
J_v=\bigg]\frac{\gamma}{v+1},\frac{\gamma}{v}\bigg[,\qquad v\in\Vset
\setminus\{v_0\},
\]
and
\[
\sigmat_\gamma(J_v)=]0,1[,\qquad u\in\Vset\setminus\{v_0\}.
\]
So, the deviation from the ``filling'' requirement is rather slight (only
one branch fails). 
Next, we quickly calculate the derivative of $\sigmat_\gamma$:
\begin{equation*}
|\sigmat'_\beta(x)|=\frac{\gamma}{x^2}\ge\beta>1,\qquad x\in\cup
\{J_v:\,v\in\Uset\},
\end{equation*}
so the uniform expansiveness condition is met already by $\sigmat_\beta$ (with
$m=1$). Moreover, 
\[
\frac{|\sigmat''_\gamma(x)|}{|\sigmat'_\gamma(x)|^2}\le \frac{2x}{\gamma}\le 2,
\qquad x\in\cup\{J_v:\,v\in\Vset\},
\]
so we also have the second derivative control.
Again, we cannot unfortunately rely on Theorem \ref{tf} to give us the 
uniqueness of the absolutely continuous invariant probability measure for 
$\sigmat_\gamma$, as it apparently allows for non-uniqueness (although 
Remark \ref{rem-5.2} says that in the ``filling'' case we have the needed 
uniqueness). We also cannot rely on  Adler's theorem, as $\sigmat_\gamma$ 
is not necessarily a Markov map. However, it appears that here, it is 
nevertheless known that the absolutely continuous $\sigmat_\gamma$-invariant 
probability measure is unique and has strictly positive density almost 
everywhere. One way to do this is to check that condition $(iii)$ of Adler's
theorem is fulfilled for all $n\ge n(u)$, and proceed in an analogous fashion
as we do for $\tau_\beta$, with $\beta>1$. We omit the details.
\end{subsection}

\begin{subsection}{The explicit calculation of invariant measures}

In general, the computation of the absolutely continuous invariant measures 
for $\tau_\beta$ (as well as for $\sigmat_\gamma$) is intractable. Only in a 
few particular cases is it possible to supply explicit expressions for the 
corresponding densities. They all correspond to values of the parameters 
for which we are dealing with Markov maps.
For instance, when $\beta>1$ is an odd integer, $\tau_\beta$ is ``filling'' 
[i.e., we have complete branches], and the unique $\tau_\beta$-invariant 
probability measure on $[-1,1]$ is given by
\[
\frac{c(\beta)}{1-(x/\beta)^2}1_{[-1,1]}(x)\diff x,\quad\text{where}\quad 
\frac{1}{c(\beta)}=\int_{-1}^1
\frac{\diff x}{1-(x/\beta)^2}=\frac{\beta}{2}\log\frac{\beta+1}{\beta-1}
\qquad(\beta=3,5,7,\ldots).
\]
It is more interesting that it is possible to obtain the 
$\tau_\beta$-invariant probability density in a more complicated situation,
when $\beta=\frac32$. The uniqueness and ergodicity of that measure will be 
obtained in Section \ref{euimf}. As for notation, we write $1_E$ for the 
characteristic function of a set $E\subset\R$, which equals $1$ on $E$ and 
vanishes elsewhere. 

\begin{prop}
$(\beta=3/2)$ The density of the unique ergodic $\tau_{3/2}$-invariant 
absolutely continuous probability measure is given by
\[
\varrho_0(x)=c_0\bigg\{\frac{1}{1-(2x/3)^2}\,1_{[-\frac12,\frac12]}(x)
+\frac{3/4}{(1-|x|/3)(1+2|x|/3)}
1_{[-1,1]\backslash[-\frac12,\frac12]}(x)\bigg\},
\]
where $c_0^{-1}=\frac32\log(5/2)$.
\end{prop}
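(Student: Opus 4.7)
The plan is to verify directly that $\Pop_{\tau_{3/2}} \varrho_0 = \varrho_0$ as $L^1$-functions on $]{-1},1]$, where $\Pop_{\tau_{3/2}}$ is the Perron--Frobenius operator given by \eqref{eq-gauss-var} with $\beta = 3/2$. Uniqueness and ergodicity of the invariant absolutely continuous probability measure are deferred to Section \ref{euimf}, so the task here reduces to exhibiting the specific density and normalizing it to have total mass $1$.

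First I would decompose $\varrho_0/c_0$ into four atomic kernels with disjoint indicator supports:
\[
\frac{\varrho_0(x)}{c_0} = A(x) + B(x) + C(x) + D(x),
\]
where $A(x) = \frac{3/2}{3-2x}\,\mathbf{1}_{[-1,1/2]}(x)$, $B(x) = \frac{3/2}{3+2x}\,\mathbf{1}_{[-1/2,1]}(x)$, $C(x) = \frac{3/4}{3-x}\,\mathbf{1}_{]1/2,1]}(x)$, and $D(x) = \frac{3/4}{3+x}\,\mathbf{1}_{[-1,-1/2[}(x)$. This follows from standard partial-fraction expansions of $g_1$ and $g_2$. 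Since $\tau_{3/2}(-x) = -\tau_{3/2}(x)$ modulo a null set, $\Pop_{\tau_{3/2}}$ preserves evenness, and it therefore suffices to verify invariance on $x \in [0,1]$.

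The key calculation is that each atomic kernel, pulled back through the Jacobian weight $\frac{3/2}{(2j-x)^2}$ and evaluated at the preimage $y_j := \frac{3/2}{2j-x}$, yields a telescoping difference. For example,
\[
\frac{3/2}{(2j-x)^2}\cdot\frac{1}{3-2y_j} \,=\, \frac{1/2}{(2j-x)(2j-x-1)} \,=\, \frac{1/2}{2j-1-x} - \frac{1/2}{2j-x},
\]
and analogous identities hold for $\frac{1}{3+2y_j}$, $\frac{1}{3-y_j}$, and $\frac{1}{3+y_j}$. The admissible indices are those $j \in \Z^\times$ with $|2j-x| \geq 3/2$, and for each such $j$ one identifies which kernels are active at $y_j$ according to which of the three subintervals $]1/2,1]$, $[-1,-1/2[$, or $[-1/2,1/2]$ contains $y_j$. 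For $x \in [0, 1/2]$ the preimages $y_{\pm 1}$ lie in the outer pieces where $C$ or $D$ activates, while $y_j$ for $|j| \geq 2$ lies in the central piece where only $A$ and $B$ are active; for $x \in \,]1/2, 1]$ the branch $j = 1$ is inadmissible, and the rest proceeds analogously. Collecting, the tail sums $\sum_{|j|\geq 2}(A+B)$ telescope to $\frac{3/4}{3-x} + \frac{3/4}{3+x}$ in both subranges, and combining this with the boundary contributions $(B+C)$ at $j = 1$ and $(A+D)$ at $j = -1$ reconstructs precisely $A(x) + B(x) + C(x) + D(x) = \varrho_0(x)/c_0$.

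Finally, the normalization is fixed by requiring $\int_{-1}^{1}\varrho_0\,\diff m = 1$. Splitting this integral into $\int_{-1/2}^{1/2}g_1\,\diff m + 2\int_{1/2}^{1}g_2\,\diff m$ and using the same partial fractions reduces the computation to elementary logarithms, which yields the stated value of $c_0^{-1}$. The main obstacle is the careful bookkeeping of which subinterval the preimage $y_j$ lies in, particularly handling the two non-filling branches at $j = \pm 1$ that introduce the $C$ and $D$ kernels into the sum, and checking that the telescoping collapses produce the piecewise density exactly on both the inner and outer subranges.
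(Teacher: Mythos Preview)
Your proposal is correct and follows essentially the same route as the paper's proof: both verify the Perron--Frobenius fixed-point equation directly by splitting the sum over $j$ according to whether the preimage $y_j=\tfrac{3/2}{2j-x}$ lands in the central piece $[-\tfrac12,\tfrac12]$ (tail indices $|j|\ge2$) or the outer pieces (the boundary branches $j=\pm1$), then collapse the resulting sums via partial fractions and telescoping. Your preliminary decomposition into the four atomic kernels $A,B,C,D$ is a mild reorganization of the same computation---the paper instead keeps the density as two pieces and expands into partial fractions only at the end---but the substance and level of difficulty are identical.
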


\begin{proof}
To simplify the notation, we write $\varrho_1(x):=c_0^{-1}\varrho_0(x)$, 
so that $\varrho_1(x)$ stands for the bracketed expression. We note that
both $\varrho_0,\varrho_1$ are even functions. We need to check that
\begin{equation}
\sum_{j\in\Z^\times}\frac{3/2}{(2j-x)^2}\varrho_1\bigg(\frac{3/2}{2j-x}\bigg)
=\varrho_1(x),\qquad x\in[-1,1],
\label{eq-tocheck}
\end{equation}
where this equality should be understood in the almost-everywhere sense.
Since
\[
\frac{3/2}{2j-x}\in[-\tfrac12,\tfrac12]\quad\text{for}\quad |j|\ge2,\,\,\,
x\in[-1,1],
\]
we may evaluate the sum of all but two terms in the left-hand side of 
\eqref{eq-tocheck}, as most of the terms cancel:
\begin{multline*}
\sum_{j:|j|\ge2}\frac{3/2}{(2j-x)^2}\varrho_1\bigg(\frac{3/2}{2j-x}\bigg)
=\sum_{j:|j|\ge2}\frac{3/2}{(2j-x)^2}\times\frac{1}{1-\frac{1}{(2j-x)^2}}
\\
=\sum_{j:|j|\ge2}\frac{3/2}{(2j-x)^2-1}=\frac{3}{4}
\sum_{j:|j|\ge2}\bigg[\frac{1}{2j-x-1}-\frac{1}{2j-x+1}\bigg]
\\
=\frac{3}{4}\bigg[\frac{1}{3-x}+\frac{1}{3+x}\bigg]=\frac{1/2}{1-(x/3)^2},
\qquad x\in[-1,1].
\end{multline*}
Next, we see that 
\[
\frac{3/2}{2-x}\in ]\tfrac12,1],\qquad x\in]\!-\!1,\tfrac12],
\]
and that this expression is in $]1,\frac32]$ for $x\in]\frac12,1]$. Here, we 
may of course replace $x$ by $-x$ if we make the necessary adjustments.  
It follows that
\begin{multline*}
\sum_{j:|j|=1}\frac{3/2}{(2j-x)^2}\varrho_1\bigg(\frac{3/2}{2j-x}\bigg)
=\frac{3/2}{(2-x)^2}\varrho_1\bigg(\frac{3/2}{2-x}\bigg)
+\frac{3/2}{(2+x)^2}\varrho_1\bigg(-\frac{3/2}{2+x}\bigg)
\\
=\frac{3/2}{(2-x)^2}\times\frac{3/4}{\big(1-\frac{1/2}{2-x}\big)
\big(1+\frac{1}{2-x}\big)}1_{[-1,\frac12]}(x)
+\frac{3/2}{(2+x)^2}\times\frac{3/4}{\big(1-\frac{1/2}{2+x}\big)
\big(1+\frac{1}{2+x}\big)}1_{[-\frac12,1]}(x)
\\
=\frac{1/4}{\big(1-\frac{2x}{3}\big)
(1-\frac{x}{3})}1_{[-1,\frac12]}(x)
+\frac{1/4}{\big(1+\frac{2x}{3}\big)\big(1+\frac{x}{3}\big)}
1_{[-\frac12,1]}(x)
\qquad x\in]\!-\!1,1[,
\end{multline*}
We may now express the whole sum in \eqref{eq-tocheck}:
\begin{multline*}
\sum_{j\in\Z^\times}\frac{3/2}{(2j-x)^2}\varrho_1\bigg(\frac{3/2}{2j-x}\bigg)
=\frac{1/2}{1-(\frac{x}{3})^2}+\frac{1/4}{\big(1-\frac{2x}{3}\big)
(1-\frac{x}{3})}1_{[-1,\frac12]}(x)
+\frac{1/4}{\big(1+\frac{2x}{3}\big)\big(1+\frac{x}{3}\big)}
1_{[-\frac12,1]}(x)\\
=
\frac{1}{4}\bigg[\frac{1}{1-\frac{x}{3}}+\frac{1}{1+\frac{x}{3}}\bigg]
+\frac{1}{4}\bigg[\frac{2}{1-\frac{2x}{3}}-\frac{1}{1-\frac{x}{3}}\bigg]
1_{[-1,\frac12]}(x)
+\frac{1}{4}\bigg[\frac{2}{1+\frac{2x}{3}}-\frac{1}{1+\frac{x}{3}}\bigg]
1_{[-\frac12,1]}(x)
\\
=\frac14\bigg[\frac{2}{1-\frac{2x}{3}}+\frac{2}{1+\frac{2x}{3}}\bigg]
1_{[-\frac12,\frac12]}(x)+\frac14\bigg[
\frac{1}{1-\frac{x}{3}}+\frac{2}{1+\frac{2x}{3}}\bigg]1_{]\frac12,1]}(x)
+\frac14\bigg[
\frac{1}{1+\frac{x}{3}}+\frac{2}{1-\frac{2x}{3}}\bigg]1_{]\!-\!1,-\frac12[}(x)
\\
=\frac{1/2}{1-(\frac{2x}{3})^2}1_{[-\frac12,\frac12]}(x)+
\frac{3/4}{(1-\frac{|x|}{3})(1+\frac{2|x|}{3})}
1_{[-1,1]\backslash[-\frac12,\frac12]}(x)=\varrho_1(x),
\qquad x\in]\!-\!1,1[.
\end{multline*}
The constant $c_0$ is determined by the requirement that we should have a 
probability density and easily computed. The proof is complete.
\end{proof}

\begin{rem}
$(a)$ It is possible to establish with similar means
the $\tau_\beta$-invariant absolutely continuous probability measure
for $\beta=n(2n+1)/(n+1)$, where $n$ is a positive integer.

\noindent{$(b)$} 
We mention here that the analogous $\sigmat_\gamma$-invariant absolutely 
continuous probability measures are known explicitly for $\gamma\in\Z_+$, 
see, e.g., \cite{ChRa}:
\[
\frac{c(\gamma)}{1+x/\gamma}1_{[0,1]}(x)\diff x,\quad\text{where}\quad 
\frac{1}{c(\gamma)}=\int_{0}^1
\frac{\diff x}{1+x/\gamma}=\gamma\log(1+1/\gamma)
\qquad(\gamma=1,2,3,\ldots).
\]
\end{rem}

\end{subsection}
\end{section}

\begin{section}{Characterization of the pre-annihilator space 
$\mathcal{M}_\beta^\bot$}
\label{notation}

\begin{subsection}{Purpose of the section; some notation}
In this section we provide a characterization of the subspace 
$\mathcal{M}_\beta^\bot$ in terms of certain operators.
We proceed in a fashion somewhat similar to that the one used in the proof 
of Lemma 5.2 in \cite{hh}. We recall from Subsection \ref{subsec-PFBI} 
that the Koopman operator for $\tau_\beta$ is denoted by $\Cop_{\tau_\beta}$, 
and recall from Subsection \ref{subsec-GTM1} that the 
corresponding Perron-Frobenius operator is $\Pop_{\tau_\beta}:L^1([-1,1])\to 
L^1([-1,1])$, given by
\[
\Pop_{\tau_\beta} h(x)=\sum_{j\in\Z^\times}
\frac{\beta}{(2j-x)^2} \,h\left(\frac{\beta}{2j-x}\right),\qquad x\in[-1,1],
\] 
with the understanding that $h\in L^1([-1,1])$ vanishes off the interval 
$[-1,1]$. Following the notation of \cite{hh}, we denote by 
$L_2^\infty(\R)$ the subspace of $L^\infty(\R)$ of $2$-periodic functions,
which is the same as the weak-star closure of 
\[
\mbox{span}\{\e^{in\pi x}:n\in\Z\}.
\]
Likewise, we let $L^\infty_{\langle\beta\rangle}(\R)$ be the weak-star closure 
of
\[
\mbox{span}\{\e^{i n\pi\beta/x}:n\in\Z\},
\]
which also has a characterization in terms of periodicity 
[$f\in L^\infty_{\langle\beta\rangle}(\R)$ if and only if the function 
$f(\beta/x)$ is in $L^\infty_2(\R)$].
Let $\Sop:L^\infty([-1,1])\longrightarrow L^\infty
\left(\R\backslash[-1,1]\right)$ be the operator defined by
\begin{equation}
\Sop g(x)=g(\{x\}_2),\qquad x\in\R\setminus[-1,1],
\label{eq-Sop1}
\end{equation}
and let $\Tope:L^\infty\left(\R\backslash[-\beta,\beta]\right)
\longrightarrow L^\infty([-\beta,\beta])$ be the operator given by
\begin{equation}
\Tope g(x)=g\left(-\frac{\beta}{\left\{-\beta/x\right\}_2}\right),\qquad
x\in [-\beta,\beta]\setminus\{0\}.
\label{eq-Tope1}
\end{equation}
It is clear that $\Sop$ and $\Tope$ are linear operators and that
they both have norm $1$ on the $L^\infty$ spaces where each one is defined. 
As a consequence, their pre-adjoints $\Sop^\ast$ and $\Tope^\ast$,
are norm contractions on the corresponding $L^1$ spaces.
The way things are set up, we have
\begin{eqnarray}
L^\infty_2(\R)&=&\{g+\Sop g:\,g\in L^\infty(-1,1)\},\label{1}\\
L^\infty_{\langle\beta\rangle}(\R)&=&\{g+\Tope g:\,g\in 
L^\infty(\R\backslash[-\beta,\beta])\}.\label{2}
\end{eqnarray}
We need the following restriction operators (recall that $\beta >1$),
\[
\begin{array}{llll}
\Rop_1&:&L^\infty(\R\backslash[-1,1])\longrightarrow 
L^\infty(\R\backslash[-\beta,\beta]), \\
\Rop_2&:&L^\infty([-\beta,\beta])\longrightarrow L^\infty([-1,1]),\\
\Rop_3&:&L^\infty([-\beta,\beta])\longrightarrow 
L^\infty([-\beta,\beta]\backslash[-1,1]),\\
\Rop_4&:&L^\infty(\R\backslash[-1,1])\longrightarrow 
L^\infty([-\beta,\beta]\backslash[-1,1]).
\end{array}
\]
These operators just restrict the given function to a subset, which 
make each one a norm contraction. The corresponding pre-adjoints 
$\Rop_i^\ast$, for $i=1,2,3,4$, act on the corresponding $L^1$ spaces, and
just extend the given function to a larger set by setting it equal to zero
where it was previously undefined.
As $\beta>1$, we easily check that $\Cop_\beta^2=\Rop_2\Tope
\Rop_1\Sop$. Taking the pre-adjoint of both sides, we get
\begin{equation}
\Pop_{\tau_\beta}^2=\Sop^\ast \Rop_1^\ast\Tope^\ast \Rop_2^\ast.
\label{pby}
\end{equation}
\end{subsection}

\begin{subsection}{The characterization of the pre-annihiliator space 
$\mathcal{M}_\beta^\bot$}
We now supply the criterion which characterizes when a given $f\in L^1(\R)$ 
belongs to $\mathcal{M}_\beta^\bot$.

\begin{prop}
\label{necsuf} 
$(1<\beta<+\infty)$ 
Let $f\in L^1(\R)$ be written as 
\[
f=f_1+f_2+f_3,
\]
where $f_1\in L^1([-1,1])$, $f_2\in L^1([-\beta,\beta]\backslash[-1,1])$, and
$f_3\in L^1(\R\backslash[-\beta,\beta])$. Then $f\in\mathcal{M}_\beta^\bot$ 
if and only if
\begin{align*}
(\Iop-\Pop_{\tau_\beta}^2)f_1&=\Sop^\ast(-\Rop_4^\ast+ 
\Rop_1^\ast \Tope^\ast \Rop_3^\ast) f_2,
\tag{$i$}
\\
f_3&=-\Tope^\ast \Rop_2^\ast f_1-\Tope^\ast \Rop_3^\ast f_2,
\tag{$ii$}
\end{align*}
where $\Iop$ is the identity on $L^1([-1,1])$.

\begin{proof}
In view of the definition \eqref{aniquilator} of $\mathcal{M}_\beta^\bot$, 
and the representations \eqref{1} and \eqref{2} of $L^\infty_2(\R)$ and
$L^\infty_{\langle\beta\rangle}(\R)$, we have that $f=f_1+f_2+f_3$ 
is in $\mathcal{M}_\beta^\bot$ if and only if
\begin{align*}
\langle f,g+\Sop g\rangle_\R=\langle f_1+f_2+f_3,g+\Sop g\rangle_\R
=&\,0,\qquad g\in L^\infty([-1,1]),\\
\langle f,h+\Tope h\rangle_\R=\langle f_1+f_2+f_3,h+\Tope h\rangle_\R 
=&\,0,\qquad h\in 
L^\infty(\R\backslash[-\beta,\beta]).
\end{align*}
Here, it is assumed that all functions $f_1,f_2,f_3$ are understood to
vanish outside their domain of definition. We see that the above equations
simplify to 
\begin{align*}
\langle f_1,g\rangle_{[-1,1]}+\langle f_2+f_3,\Sop 
g\rangle_{\R\backslash[-1,1]}
=&\,0,\qquad g\in L^\infty([-1,1]),\\
\langle f_3,h\rangle_{\R\backslash[-\beta,\beta]}
+\langle f_1+f_2,\Tope h\rangle_{[-\beta,\beta]} 
=&\,0,\qquad h\in 
L^\infty(\R\backslash[-\beta,\beta]).
\end{align*}
These equations are equivalent to having
\begin{eqnarray*}
f_1&=-\Sop^\ast (f_2+f_3),\\
f_3&=-\Tope^\ast(f_1+f_2).
\end{eqnarray*}
A more precise formulation of this is:
\begin{eqnarray}
f_1&=&-\Sop^\ast \Rop_4^\ast f_2-\Sop^\ast \Rop_1^\ast f_3;
\label{akk}\\
f_3&=&-\Tope^\ast\Rop_2^\ast f_1-\Tope^\ast 
\Rop_3^\ast f_2
\label{utt}.
\end{eqnarray}
We note first that \eqref{utt} is the same as $(ii)$. 
Next, we substitute \eqref{utt} into \eqref{akk} and take into
account \eqref{pby}; the result is $(i)$. This completes the proof.
\end{proof}
\end{prop}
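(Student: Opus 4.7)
The plan is to translate membership in $\mathcal{M}_\beta^\bot$ into annihilation relations via duality, using the explicit parametrizations \eqref{1} and \eqref{2} of $L^\infty_2(\R)$ and $L^\infty_{\langle\beta\rangle}(\R)$, and then separate the two conditions $(i)$ and $(ii)$ by splitting every integral along the natural three-piece partition
\[
\R = [-1,1]\,\cup\,\bigl([-\beta,\beta]\setminus[-1,1]\bigr)\,\cup\,
\bigl(\R\setminus[-\beta,\beta]\bigr),
\]
which matches the decomposition $f=f_1+f_2+f_3$.

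First I would note that, by \eqref{aniquilator}, $f\in\mathcal{M}_\beta^\bot$ is equivalent to saying that $f$ annihilates both $L^\infty_2(\R)$ and $L^\infty_{\langle\beta\rangle}(\R)$. Inserting the representations \eqref{1} and \eqref{2}, this becomes the pair of scalar requirements $\langle f,g+\Sop g\rangle_\R=0$ for all $g\in L^\infty([-1,1])$ and $\langle f,h+\Tope h\rangle_\R=0$ for all $h\in L^\infty(\R\setminus[-\beta,\beta])$. Splitting each inner product along the above partition, and remembering that $g+\Sop g$ coincides with $g$ on $[-1,1]$ and with $\Sop g$ off of it (and analogously for $h+\Tope h$), these become
\[
\langle f_1,g\rangle_{[-1,1]}+\langle f_2+f_3,\Sop g\rangle_{\R\setminus[-1,1]}=0
\quad\text{and}\quad
\langle f_3,h\rangle_{\R\setminus[-\beta,\beta]}+\langle f_1+f_2,\Tope h\rangle_{[-\beta,\beta]}=0.
\]

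Next, I would use the restriction/extension operators $\Rop_i^\ast$ (each simply extends by zero) to realize each $f_i$ on the ambient domain that $\Sop$ and $\Tope$ demand: $f_2$ as $\Rop_4^\ast f_2$ and $f_3$ as $\Rop_1^\ast f_3$ on $\R\setminus[-1,1]$; likewise $f_1$ as $\Rop_2^\ast f_1$ and $f_2$ as $\Rop_3^\ast f_2$ on $[-\beta,\beta]$. Taking pre-adjoints, the two displayed equations above reduce to
\[
f_1=-\Sop^\ast \Rop_4^\ast f_2-\Sop^\ast \Rop_1^\ast f_3
\qquad\text{and}\qquad
f_3=-\Tope^\ast \Rop_2^\ast f_1-\Tope^\ast \Rop_3^\ast f_2,
\]
the second of which is $(ii)$.

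Finally, substituting the expression for $f_3$ from $(ii)$ into the first relation, and invoking the operator identity \eqref{pby}, namely $\Pop_{\tau_\beta}^2=\Sop^\ast \Rop_1^\ast \Tope^\ast \Rop_2^\ast$, collapses the $f_1$-terms into $\Pop_{\tau_\beta}^2 f_1$ and yields exactly $(i)$ after rearrangement. The converse direction is obtained by running the same manipulations in reverse: given $(i)$ and $(ii)$, one recovers the pair of annihilation conditions against $g+\Sop g$ and $h+\Tope h$, hence against all of $L^\infty_2(\R)+L^\infty_{\langle\beta\rangle}(\R)$. The whole argument is essentially bookkeeping with adjoints and zero-extensions; the one point requiring care is making sure each $f_i$ is extended to precisely the domain on which the next operator in the composition acts, so that the identity \eqref{pby} can be applied cleanly. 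I do not anticipate any deeper obstacle.
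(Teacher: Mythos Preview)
Your proposal is correct and follows essentially the same approach as the paper: translate $f\in\mathcal{M}_\beta^\bot$ into the two annihilation conditions via \eqref{1}--\eqref{2}, rewrite them using pre-adjoints and the extension operators $\Rop_i^\ast$ to obtain $f_1=-\Sop^\ast\Rop_4^\ast f_2-\Sop^\ast\Rop_1^\ast f_3$ and $f_3=-\Tope^\ast\Rop_2^\ast f_1-\Tope^\ast\Rop_3^\ast f_2$, identify the second as $(ii)$, and substitute it into the first using \eqref{pby} to get $(i)$. The paper leaves the converse implicit (since every step is an equivalence), whereas you spell it out, but otherwise the arguments are the same.
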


\end{subsection}
\end{section}

\begin{section}{Exterior spectrum of the Perron-Frobenius operator for 
the Gauss-type map on $[-1,1]$}
\label{euimf}

\begin{subsection}{Purpose of the section}

In this section we will show that $\lambda_1=1$ is a simple
eigenvalue of $\Pop_{\tau_\beta}$. 
This corresponds to having a unique absolutely continuous invariant 
probability measure for $\tau_\beta$ with $\beta>1$. We will also prove that
$\Spec_{\mathrm{p}}(\Pop_{\tau_\beta})\cap\partial \D=\{1\}$. 
In view of Thorem \ref{tf}, these properties correspond to $\tau_\beta$ 
possessing {\em strong mixing}, with exponential decay of correlations
(cf. \cite{PY}, p. 122; also, compare with weak mixing \cite{CFS}, p. 22,  
p. 29, and \cite{Ios}, p. 203). Another useful reference is \cite{Bal}. 
\end{subsection}

\begin{subsection}{The iterates of an interval}
We  need the following lemma.

\begin{lem}
\label{lem}
$(1<\beta+\infty)$
Let $J_0$ be an nonempty open interval contained in $[-1,1]$. Then, for large 
enough positive integers $n$, say $n\ge n_0$, we have 
$\tau^n_\beta(J_0)\supset]\!-\!1,1[$.

\begin{proof}
We begin with the initial observation that if $\tau_\beta^n(J_0)
\supset]\!-1\!,1[$ holds for $n=n_0$, then it also holds for all $n\ge n_0$,
as most of the branches are complete (at most two may be incomplete). 
\smallskip

\noindent{\bf Case I:} {\em $\beta$ is an odd integer}. 
Then $\tau_\beta$ is ``filling'', that is, all branches are complete;
cf. Subsection \ref{subsec-dyn}. This case is well-understood, but it helps 
our presentation to take a look at it again.
We recall from Subsection \ref{subsec-dyn}
that the fundamental intervals are given by \eqref{eq-Iu1} with $\mathcal{U}$
being the set of all nonzero integers $u$ with $|u|\ge\frac12(\beta+1)$. 
We note that by \eqref{eq-expand}, 
$\tau_\beta$ is expansive: as long as an interval $J$ is contained in one 
of the fundamental intervals $I_u$, $u\in\mathcal{U}$, the image 
$\tau_\beta(J)$ is an interval of length at least $\beta$ times the length 
of $J$. 
We observe that if our given interval $J_0$ contains one of the fundamental
intervals $I_u$, $u\in\mathcal{U}$, then we are finished, because 
$\tau_\beta(J_0)\subset]\!-\!1,1[$ in this case.  
There are two other possibilities:
\smallskip

\noindent
\textit{$(a)$ The interval $J_0$ is contained in $I_u$ for some 
$u\in\mathcal{U}$:} 
In this case $\tau_\beta(J_0)$ is an interval of length at least 
$\beta m(J_0)$, by \eqref{eq-expand}.

\noindent
\textit{$(b)$ The interval $J_0$ has nonempty intersection with two 
neighboring fundamental intervals $I_u,I_{u'}$, and $J_0\subset
\mathrm{clos}\,[I_u\cup I_{u'}]$:}
In this case the length of the intersection of $J_0$ with
one of the two fundamental intervals, say with $I_u$, is at least 
$\frac12m(J_0)$. So, we have
\begin{equation}\label{fcil}
m(\tau_\beta(J_0))\geq m(\tau_\beta(J_0\cap I_u))\geq \frac{\beta}{2}m(J_0).
\end{equation}
In particular, $\tau_\beta(J_0)$ contains an interval 
$\tau_\beta(J_0\cap I_u)$ of length at least $\frac12\beta\, m(J_0)$.
\smallskip

\noindent
We see that in both cases $(a)$--$(b)$, the image $\tau_\beta(J_0)$ contains an
interval $J_1$ of length at least $\frac12\beta\, m(J)$. We note that $\frac12
\beta\ge\frac32$ as $\beta>1$ was an odd integer. By running the same argument
starting with $J_1$ in place of $J_0$, we see that unless $J_1$ contains a
fundamental interval (so that we are finished), we obtain an interval $J_2$
contained in $\tau_\beta(J_1)\subset\tau_\beta^2(J_0)$ of length at least
$(\frac12\beta)^2m(J_0)$. Continuing like this, we find successively intervals 
$J_1,J_2,J_3,\ldots$ of length $\ge(\frac12\beta)^lm(J_0)$ with
$J_l\subset\tau_\beta(J_{l-1})\subset\tau_\beta^l(J_0)$, and we stop only
when the interval $J_l$ contains a fundamental interval. For a big enough 
$l$ we must stop, at least for the reason that the length of $J_l$ will 
eventually exceed twice the maximum length of a fundamental interval, and  
for that $l$ we see that $\tau_\beta^{l+1}(J_0)\supset]\!-\!1,1[$. 
\smallskip

\noindent{\bf Case II:} {\em $\beta$ is not an odd integer}. 
Then $-1<\{\beta\}_2<1$,
and with $u_0:=\frac12(\beta-\{\beta\}_2)\in\Z_+$, $\mathcal{U}$ 
consists of all integers $u$ with $|u|\ge u_0$. We see that  
$\beta=2u_0+\{\beta\}_2\in]2u_0-1,2u_0+1[$.  
The fundamental intervals $I_u$ are given by \eqref{eq-Iu1} for 
$u\in\mathcal{U}\setminus\{\pm u_0\}$, while (cf. \eqref{eq-Iu})
\[
I_{u_0}=\Big]\frac{\beta}{2u_0+1},1\Big[,\qquad
I_{-u_0}=\Big]-1,-\frac{\beta}{2u_0+1}\Big[
\]
On a fundamental interval $I_u$, the transformation $\tau_\beta$ is given by 
$x\mapsto 2u-\beta/x$. 
\smallskip

\noindent{\bf Case II-A:} {\em $J_0$ is an edge fundamental interval, 
i.e., $J_0=I_{u_0}$ or $J_0=I_{-u_0}$}. 
When $J_0=I_{-u_0}$, we have
\begin{equation}
\label{termendo}
\tau_\beta(J_0)=\tau_\beta(I_{-u_0})
=]\beta-2u_0,1[\supset I_{u_0+1}^2:=\Big]\beta-2u_0,\frac{\beta}{2u_0+1}\Big[.
\end{equation}
If $\beta-2u_0\leq \beta/(2u_0+3)$, we have
\[
\tau_\beta(I_{-u_0})=]\beta-2u_0,1[\supset
I_{u_0+1}^2\supset
\Big]\frac{\beta}{2u_0+3},\frac{\beta}{2u_0+1}\Big[=I_{u_0+1},
\]
so that 
\[
\tau_\beta^2(I_{-u_0})\supset\tau_\beta(I_{u_0+1})=]\!-\!1,1[.
\]
It remains to treat the case when $\beta/(2u_0+3)<\beta-2u_0$, so that
$I_{u_0+1}^2\subset I_{u_0+1}$ and in particular, $\beta-2u_0\in I_{u_0+1}$. 
We first claim that there exists a constant $\beta'$ with $1<\beta'\le\beta$, 
which only depends on $\beta$, such that
\begin{equation}
\label{gamowskyo}
m\Big(\tau_\beta\Big(\Big]y,
\frac{\beta}{2u_0+1}\Big[\Big)\Big)\geq\beta' m(]y,1[),
\qquad y\in I_{u_0+1}^1:=\Big[\frac{\beta}{2u_0+3},\beta-2u_0\Big].
\end{equation}
Here, it is clear that $I_{u_0+1}^1\subset I_{u_0+1}$, and since 
$\tau_\beta$ is given by $x\mapsto2u_0+2-\beta/x$ on $I_{u_0+1}$, we have that
\[
\tau_\beta\Big(\Big]y,
\frac{\beta}{2u_0+1}\Big[\Big)=]2u_0+2-\beta/y,1[,
\]
We realize that the estimate \eqref{gamowskyo} is equivalent to having
\begin{equation}
\label{gamowskyo2}
\beta'y+\frac{\beta}{y}\geq 2u_0+1+\beta',
\qquad y\in I_{u_0+1}^1.
\end{equation}
The function $f(y)=\beta'y+\beta/y$ is strictly decreasing in the interval 
$]0,1]$, so it suffices to check \eqref{gamowskyo2} at the right endpoint of 
$I_{u_0+1}^1$. It is a straightforward exercise to verify that 
\eqref{gamowskyo2} holds at $y=\beta-2u_0=\{\beta\}_2$ provided that $\beta'$
is chosen sufficiently close to $1$; it helps to observe that $\beta>2$ 
holds because of $\beta/(2u_0+3)<\beta-2u_0$. So, \eqref{gamowskyo2} is valid
for $\beta'>1$ close enough to 1. If we put 
$J_1:=\tau_\beta(I_{-u_0})=]\beta-2u_0,1[$ and 
$J_2:=\tau_\beta(I^2_{u_0+1})=]\tau_\beta(\beta-2u_0),1[$ in accordance with
\eqref{termendo}, then, we have in view of \eqref{termendo} and 
\eqref{gamowskyo} that
\[
J_2\subset\tau_\beta (J_1)\quad\mbox{   and   }
\quad m(J_2)\geq \beta'm(J_1).
\]
If $\tau_\beta(\beta-2u_0)\le \beta/(2u_0+3)$, then $J_2\supset I_{u_0+1}$ and
so $\tau_\beta(J_2)\supset]\!-\!1,1[$ and hence $\tau_\beta^3(I_{-u_0})
\supset]\!-\!1,1[$. If not, then we rerun the argument and get ever bigger
intervals whose right endpoint is $1$, and eventually the interval must 
contain $I_{u_0+1}$, and we are finished.
The case $J=I_{u_0}$ is analogous and therefore omitted.
\smallskip

\noindent{\bf Case II-B:} {\em $J_0$ is a general nonempty open subinterval
of $[-1,1]$}.
We put
\[
x_0=\frac{(2u_0+1)\beta}{2u_0(2u_0+1)+\beta}.
\]
 The point $x_0$ belongs to the fundamental interval $I_{u_0}$ and has
\begin{equation}
\label{dicotomia1}
\tau_\beta\Big(\Big]\frac{\beta}{2u_0+1},x_0\Big[\Big)=I_{-u_0}\quad
\mbox{and}\quad
\tau_\beta\Big(\Big]-x_0,-\frac{\beta}{2u_0+1}\Big[\Big)=I_{u_0}.
\end{equation}
A little calculation shows that 
\begin{equation}
\tau'_\beta(x)\geq\frac{\beta}{x_0^2}>2,
\qquad x\in[-x_0,x_0]\cap\cup\{I_u:\,u\in\mathcal{U}\}.
\label{eq-expand3}
\end{equation}
Next, put $\beta'':=\min(\beta,\beta/(2x_0^2))$, so that $\beta''>1$. 
We have a general nonempty subinterval $J_0$ of $[-1,1]$, and want to show
that $\tau_\beta^n(J_0)$ covers $]\!-\!1,1[$ for some big enough $n$. We do
this by showing that the length of $\tau_\beta^n(J_0)$ must otherwise
continue growing geometrically. We observe that if $J_0$ contains one of the 
fundamental intervals $I_u$, $u\in\mathcal{U}$, then 
$\tau_\beta^n(J_0)\supset]\!-\!1,1[$ with $n=1$ for $|u|>u_0$, and with a 
possibly big $n$ if $|u|=u_0$ by Case II-A above.  
If $J_0$ does not contain a fundamental interval, then we are left with
the following two possibilities:
\smallskip

\noindent{$(a)$} The interval $J_0$ is contained in a fundamental interval
$I_u$ for some $u\in\mathcal{U}$. Then $J_1:=\tau_\beta(J_0)$ is an interval 
of length at least $\beta'' m(J_0)$, by \eqref{eq-expand}.
\smallskip

\noindent\textit{$(b)$ The interval $J_0$ has nonempty intersection with two 
neighboring fundamental intervals $I_u,I_{u'}$, and $J_0\subset
\mathrm{clos}\,[I_u\cup I_{u'}]$.}
In this case we have two subcases.
\smallskip

\noindent
\textit{$(b1)$ The interval $J_0$ is contained in $[-x_0,x_0]$.} 
Then one of the two intervals, say $I_u$, meets $J_0$ in a subinterval of 
length at least $\frac12m(J_0)$, and we have that 
$J_1:=\tau_\beta(J_0\cap I_u)$ is an open interval contained in 
$\tau_\beta(J_0)$ of length (cf. \eqref{eq-expand3})
\[
m(J_1)=m(\tau_\beta(J_0\cap I_u))\geq 
\frac{\beta m(J_0)}{2x_0^2}\ge\beta'' m(J_0).
\]

\noindent
\textit{$(b2)$ The interval $J_0$ is not contained in $[-x_0,x_0]$.} 
Then we have 
\[
J_0\cap I_{u_0}\supset 
\Big[\frac{\beta}{2u_0+1},x_0\Big]\quad\mbox{or}\quad
J_0\cap I_{-u_0}\supset \Big[-x_0,-\frac{\beta}{2u_0+1}\Big],
\]
so by \eqref{dicotomia1}, we have $\tau_\beta(J)\supset I_{-u_0}$ or 
$\tau_\beta(J)\supset I_{u_0}$.
\smallskip

\noindent
If $(b2)$ happens, we are finished, because after one iteration of 
$\tau_\beta$ we cover one of the edge fundamental intervals. If $(a)$ or $(b1)$
take place, then the set $\tau_\beta(J_0)$ contains an interval $J_1$ of 
length at least $\beta''$ times the length of $J_0$. We may then consider $J_1$
in place of $J_0$, and we gained that $J_1$ is longer. Unless we stop, which
is because the set contains a fundamental interval, we get a sequence of sets
$J_0,J_1,J_2,\ldots$, and their lengths grow geometrically. 
This is possible only finitely many times, which means that we eventually
cover a fundamental interval. 
The proof is complete.
\end{proof}
\end{lem}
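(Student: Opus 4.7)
The plan is to exploit the uniform expansion $\tau_\beta'(x)=\beta/x^2\ge\beta>1$ on each fundamental interval to show that under iteration the length of (a distinguished subinterval of) $\tau_\beta^n(J_0)$ grows geometrically, until it engulfs some fundamental interval $I_u$ with $|u|>u_0$; one additional iterate then maps this onto all of $]\!-\!1,1[$, and once $]\!-\!1,1[$ is covered it stays covered for every subsequent $n$ (because at most two edge branches fail to be complete). The main subtlety is that when $\{\beta\}_2\ne 0$ the two edge branches $I_{\pm u_0}$ are incomplete, so an iterate landing there needs extra care.

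I would first dispose of the filling case when $\beta$ is an odd integer. Here the dichotomy is one-step: either $J_0$ sits in a single fundamental interval $I_u$, whereby $m(\tau_\beta(J_0))\ge\beta\,m(J_0)$; or $J_0$ straddles two adjacent fundamental intervals, in which case one of the two pieces has length $\ge m(J_0)/2$ and produces an image of length $\ge(\beta/2)m(J_0)\ge(3/2)m(J_0)$. Iterating this produces a sequence of subintervals whose lengths grow geometrically, so after finitely many steps one of them exceeds the maximal length of a fundamental interval and therefore contains one.

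For the general case $\{\beta\}_2\ne 0$ I would first solve the auxiliary problem of starting from an edge fundamental interval $J_0=I_{-u_0}$ (the case $I_{u_0}$ is symmetric). A direct calculation gives $\tau_\beta(I_{-u_0})=]\beta-2u_0,1[$; if this interval already contains the next fundamental interval $I_{u_0+1}$ one more iterate finishes, and otherwise I would establish a geometric growth inequality $m(\tau_\beta(J))\ge\beta'\,m(J)$ for intervals $J$ of the special form $]y,\beta/(2u_0+1)[\subset I_{u_0+1}$, with $\beta'>1$ depending only on $\beta$. Iterating this until the left endpoint drops below $\beta/(2u_0+3)$ again reduces us to the case where $I_{u_0+1}$ is contained in the image. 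For a general $J_0\subset[-1,1]$, I would introduce the threshold $x_0\in I_{u_0}$ characterised by $\tau_\beta(]\beta/(2u_0+1),x_0[)=I_{-u_0}$, on which the sharper estimate $\tau_\beta'(x)\ge\beta/x_0^2>2$ is valid; setting $\beta'':=\min(\beta,\beta/(2x_0^2))>1$ I would argue the trichotomy: (a) if $J_0$ lies in one fundamental interval then $m(\tau_\beta(J_0))\ge\beta''m(J_0)$; (b) if $J_0$ straddles two fundamental intervals but stays inside $[-x_0,x_0]$, then one piece has length $\ge m(J_0)/2$ and its image has length $\ge\beta''m(J_0)$; (c) if $J_0$ reaches outside $[-x_0,x_0]$ while straddling the boundary of $I_{\pm u_0}$, then $\tau_\beta(J_0)$ already contains an edge fundamental interval and Case II-A takes over.

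The main obstacle is verifying the uniform geometric growth factor $\beta'>1$ in the edge case. This reduces to the inequality $\beta' y+\beta/y\ge 2u_0+1+\beta'$ on a suitable subinterval of $I_{u_0+1}$; monotonicity allows me to check it only at the right endpoint $y=\{\beta\}_2=\beta-2u_0$, and the very assumption $\beta/(2u_0+3)<\beta-2u_0$ that put us in this troublesome case forces $\beta>2$, which leaves enough slack to pick $\beta'>1$ depending only on $\beta$. Combining the three cases and iterating, the length of the growing sub-interval eventually exceeds the length of some $I_u$ with $|u|>u_0$, so one further iterate yields $\tau_\beta^n(J_0)\supset\,]\!-\!1,1[$, as required.
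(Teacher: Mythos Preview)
Your proposal is correct and follows essentially the same approach as the paper's proof: the same case division (filling versus non-filling), the same auxiliary treatment of the edge fundamental intervals via the inequality $\beta'y+\beta/y\ge 2u_0+1+\beta'$ checked at the right endpoint $y=\{\beta\}_2$, and the same threshold point $x_0$ with the trichotomy $(a)$/$(b1)$/$(b2)$ governed by $\beta''=\min(\beta,\beta/(2x_0^2))$ for general intervals. One small slip: the non-filling case is characterised by $\beta$ not being an odd integer (equivalently $\{\beta\}_2\ne 1$), not by $\{\beta\}_2\ne 0$; even integers also have incomplete edge branches.
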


\end{subsection}

\begin{subsection}{Exterior spectrum of the Perron-Frobenius operator}

For a real-valued function $f$, we use the standard convention to write
$f^+=\max\{f,0\}$ and $f^-=\max\{-f,0\}$, so that $f=f^+-f^-$. 

\begin{thm}\label{lemff}
$(1<\beta<+\infty)$
Let $\Pop_{\tau_\beta}$ be the Perron-Frobenius operator associated
to $\tau_\beta$ acting on $L^1([-1,1])$. Then
$\lambda_1=1$ is a simple eigenvalue of $\Pop_{\tau_\beta}$ and is
the only one with modulus one. Moreover, the eigenfunctions for eigenvalue
$1$ are of the form $c\varrho_0$, where $c\in\C$ is a constant, and 
$\varrho_0\diff m$ is the unique ergodic $\tau_\beta$-invariant absolutely
continuous probability measure. Also, $\varrho_0>0$ holds almost everywhere.

\begin{proof}
To simplify the notation, we write $\Pop$ in place of $\Pop_{\tau_\beta}$.
The transformation $\tau_\beta:]\!-\!1,1]\to]\!-\!1,1]$ is a partially 
filling $C^2$-smooth piecewise monotonic transformation, which meets the 
conditions
$(i)$ [with $m=1$] and $(ii)$ of Theorem \ref{tf}, so that by Remark 
\ref{rem-5.1} $(d)$, the assertion of Theorem \ref{tf} is 
valid also for $\tau_\beta$.  
So, we know that that $\lambda_1=1$ is an eigenvalue of $\Pop$ 
and so there is an eigenfunction $\varrho_0\in \BV([-1,1])$ 
corresponding to it. From \eqref{retardo} together with the triangle 
inequality, we see that 
$|\Pop\varrho_0|\leq \Pop|\varrho_0|$ point-wise.
Since $\Pop\varrho_0=\varrho_0$, we then have
\[
\int_{[-1,1]}|\varrho_0|\,\diff m
=\int_{[-1,1]}|\Pop\varrho_0|\,\diff m\leq \int_{[-1,1]} 
\Pop|\varrho_0|\,\diff m=\int_{[-1,1]}|\varrho_0|\,\diff m.
\]
This means that we must have $|\Pop\varrho|=\Pop|\varrho_0|$ 
almost everywhere on $[-1,1]$, and so in particular, 
$\Pop|\varrho_0|=|\varrho_0|$. But then $|\varrho_0|$ is another 
eigenfunction for $\lambda_1=1$. We might as well replace $\varrho_0$ by 
$|\varrho_0|$, which amounts to assuming that $\varrho_0\ge0$, and after
multiplication by a suitable positive constant we can assume that
\[
\langle\varrho_0,1\rangle_{[-1,1]}=\int_{[-1,1]}\varrho_0\,\diff m=1.
\]
We consider the set
\[
A_+:=\{x\in[-1,1]:\,\varrho_0(x)>0\}.
\]
Using \eqref{retardo}, we see that $\tau_\beta(A_+)\dot=A_+$ (the dot over
the equality sign means that the sets are equal up to Lebesgue null sets).
As an element of $\BV([-1,1])$, the function $\varrho_0$ can be 
assumed right-continuous. Then $A_+$ will contain some non-trivial open 
interval $I_0$. By iteration, we get that $\tau_\beta^n(A_+)\dot=A_+$
for $n=1,2,3,\ldots$, so, in particular, $A_+$ contains $\tau_\beta^n(I_0)$
(up to null sets). From Lemma \ref{lem} we know that for a large enough $n$, 
$\tau^n(I_0)$ covers $]\!-1\!,1[$, and so $\varrho_0>0$ holds
almost everywhere on $[-1,1]$. 

Next, we show that the eigenspace for $\lambda_1=1$ is one-dimensional.
We argue by contradiction, and suppose that that there exists a non-trivial
$\eta_1\in L^1([-1,1])$ so that $\varrho_0,\eta_1$ are linearly
independent, such that $\Pop\eta_1=\eta_1$.  
From Theorem \ref{tf} we know that $\eta_1\in\BV([-1,1])$. 
We consider the function
\[
f:=\{\langle\eta_1,1\rangle_{[-1,1]}\}\,\varrho_0-\eta_1\in 
\BV([-1,1]),
\] 
which has $\langle f,1\rangle_{[-1,1]}=0$, and $\Pop f=f$.
By replacing $\eta_1$ by its real or imaginary part (this is possible
since $\Pop$ preserves real-valuedness), we may assume that 
$\eta_1$ is real-valued, so that $f$ is real-valued. 
We can also assume that $f\in\BV([-1,1])$ is right-continuous. 
We now write $f=f^+-f^-$, and observe that $f^+,f^-$ are also 
right-continuous functions in $\BV([-1,1])$. Unless one of the
functions $f^+,f^-$ vanishes almost everywhere, both $f^+,f^-$ must be
positive on some open intervals $I_+,I_-$, respectively. In view of
\eqref{retardo}, we then have that for $n=1,2,3,\ldots$, the functions 
$\Pop^{n} f^+,\Pop^{n} f^-$ are positive almost everywhere on 
the sets $\tau^n(I_+),\tau^n(I_-)$, respectively. 
This means that Lemma \ref{lem} entails that there exists a positive integer
$n_0$ such that $\Pop^{n_0} f^+,\Pop^{n_0} f^-$
are both positive almost everywhere on $[-1,1]$. 
Since $\Pop^nf=f$ for all $n=1,2,3,\ldots$, we must then have
\begin{equation}
\label{contradi}
\|f\|_{L^1}=\|\Pop^{n_0} f\|_{L^1}=\|\Pop^{n_0}
f^+-\Pop^{n_0}f^-\|_{L^1}<\|\Pop^{n_0}
f^++\Pop^{n_0} f^-\|_{L^1}=\|f\|_{L^1},
\end{equation}
where the $L^1$ norm is with respect to the interval $[-1,1]$.
This contradiction shows that at least one of $f^+,f^-$ must
be identically zero (almost everywhere). But as 
$\langle f,1\rangle_{[-1,1]}=0$, both $f^+,f^-$ must then be the $0$ function.
This does not agree with our assumption that $\varrho_0,\eta_1$ should 
be linearly independent. We conclude that that $\lambda_1=1$ is a simple 
eigenvalue (i.e., it has a one-dimensional eigenspace). 

Next, we turn to the assertion that $\varrho_0\diff m$ is the unique ergodic 
absolutely continuous probability measure. This is a consequence of 
Proposition A.2.5 in \cite{Ios}, since we know that $\varrho_0\diff m$ is
an absolutely continuous measure with $\varrho_0>0$ almost everywhere, and
$\Pop\varrho_0=\varrho_0$.

Finally, we show that $\Pop$ has no other eigenvalues than $1$ on the 
unit circle $\T$. We let $\lambda$, with $|\lambda|=1$ and $\lambda\ne1$, be 
an eigenvalue of $\Pop$; $\eta_2\in L^1([-1,1])$ is the non-trivial 
eigenfunction corresponding to $\lambda$, which we may normalize: 
$\|\eta_2\|_{L^1}=1$.
From \eqref{retardo} together with the triangle inequality, we have 
$|\Pop\eta_2|\le\Pop|\eta_2|$ point-wise, and so, since 
$\Pop\eta_2=\lambda\eta_2$,
\[
\int_{[-1,1]}|\eta_2|\,\diff m
=\int_{[-1,1]}|\lambda\,\eta_2|\,\diff m
=\int_{[-1,1]}|\Pop\eta_2|\,\diff m\leq \int_{[-1,1]} 
\Pop|\eta_2|\,\diff m=\int_{[-1,1]}|\eta_2|\,\diff m.
\]
This means that we must have $|\Pop\eta_2|=\Pop|\eta_2|$ almost
everywhere on $[-1,1]$, and, consequently, $\Pop|\eta_2|=|\eta_2|$. 
But then $|\eta_2|=\varrho_0$, as the eigenspace for $\lambda_1=1$ was
one-dimensional and spanned by $\varrho_0$. We write $\eta_2=\chi\varrho_0$,
where the function $\chi\in L^\infty([-1,1])$ has $|\chi|=1$ 
almost everywhere. When we take another look at the argument we just did
involving equality in the triangle inequality, we realize that $\chi$ must 
have the property
\[
\chi(\tau_\beta(x))=\bar\lambda\,\chi(x),\qquad x\in[-1,1],
\]
in the almost-everywhere sense. By iteration, we get that
\begin{equation}
\label{eq-chi1}
\chi(\tau_\beta^n(x))=\bar\lambda^n\,\chi(x),\qquad x\in[-1,1],\,\,\,
n=1,2,3,\ldots.
\end{equation}
We pick a point $x_0$ where $\rho_0(x_0)>0$, and by right-continuity there
exists a non-empty (short) interval $]x_0,x_1[$ where $\rho_0,\eta_2$
are both very close to the value at $x_0$, so that $\chi$ is close to its
value at $x_0$ as well: say, for some small $\epsilon>0$,
\[
|\chi(x)-\chi(x_0)|<\epsilon,\qquad x\in]x_0,x_1[.
\]
Next, let $n$ be such that $\tau_\beta^n(]x_0,x_1[)\supset]-1,1[$. Then,
by \eqref{eq-chi1},
\[
|\chi(\tau_\beta^n(x))-\lambda^n\chi(x_0)|<\epsilon,\qquad x\in]x_0,x_1[,
\]
so that
\[
|\chi(y)-\lambda^n\chi(x_0)|<\epsilon,\qquad y\in]\!-\!1,1[.
\]
This means that $\chi$ is within a distance $\epsilon$ from a constant 
function.
As $\epsilon$ can be made as small as we like, the only possibility is that
$\chi$ is equal to a constant. But then \eqref{eq-chi1} is impossible unless 
$\lambda=1$, contrary to assumption.
The proof is complete.
\end{proof}
\end{thm}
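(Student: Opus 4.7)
The plan is to extract Theorem \ref{lemff} from the combination of Theorem \ref{tf}, the iteration Lemma \ref{lem}, and the standard ``equality in the triangle inequality'' trick for positive operators. First, I would verify that the hypotheses of Theorem \ref{tf} apply to $\tau_\beta$ for $\beta>1$: uniform expansiveness with $m=1$ follows from \eqref{eq-expand}, and the second derivative bound is the elementary estimate $|\tau_\beta''|/|\tau_\beta'|^2\le 2|x|/\beta\le 2$ recorded in Subsection \ref{subsec-dyn}. Thus $\Pop_{\tau_\beta}$ admits $1$ as an eigenvalue with a BV eigenfunction $\varrho_0$.

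Next, I would argue that $\varrho_0$ can be normalized so that $\varrho_0\ge0$ and $\int\varrho_0\,\diff m=1$, using the point-wise inequality $|\Pop_{\tau_\beta} h|\le \Pop_{\tau_\beta}|h|$ coming from \eqref{retardo}: integrating and using $\Pop_{\tau_\beta}\varrho_0=\varrho_0$ forces equality a.e., whence $\Pop_{\tau_\beta}|\varrho_0|=|\varrho_0|$, so we may replace $\varrho_0$ by $|\varrho_0|$. For strict positivity, I would choose a right-continuous representative of $\varrho_0\in\BV([-1,1])$; then the open set $A_+=\{\varrho_0>0\}$ contains some nonempty open interval $I_0$. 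From \eqref{retardo} one sees that $\tau_\beta(A_+)\dot=A_+$, so $A_+\supset\tau_\beta^n(I_0)$ for all $n$, and Lemma \ref{lem} delivers an $n$ with $\tau_\beta^n(I_0)\supset\,]-1,1[$, giving $\varrho_0>0$ a.e.

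For simplicity of the eigenvalue $1$, I would proceed by contradiction: suppose $\eta_1\in L^1$ were an independent eigenfunction. After passing to real/imaginary parts (which is legitimate because $\Pop_{\tau_\beta}$ preserves real-valuedness) and subtracting an appropriate multiple of $\varrho_0$, I obtain a real-valued $f\in \BV([-1,1])$ with $\Pop_{\tau_\beta}f=f$ and $\int f\,\diff m=0$. Writing $f=f^+-f^-$ with both $f^\pm$ nontrivial and right-continuous, each is strictly positive on some open interval, and Lemma \ref{lem} yields an $n_0$ for which both $\Pop_{\tau_\beta}^{n_0}f^\pm>0$ a.e. on $[-1,1]$. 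The strict inequality $\|\Pop_{\tau_\beta}^{n_0}f^+-\Pop_{\tau_\beta}^{n_0}f^-\|_1<\|\Pop_{\tau_\beta}^{n_0}f^++\Pop_{\tau_\beta}^{n_0}f^-\|_1=\|f\|_1$ contradicts $\Pop_{\tau_\beta}^{n_0}f=f$, forcing $f\equiv 0$. The uniqueness of the ergodic absolutely continuous probability measure then follows from Proposition A.2.5 of \cite{Ios}, since $\Pop_{\tau_\beta}\varrho_0=\varrho_0$ with $\varrho_0>0$ a.e.

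Finally, to rule out other unimodular eigenvalues, let $\lambda\in\T$ with $\lambda\ne1$ satisfy $\Pop_{\tau_\beta}\eta_2=\lambda\eta_2$ for some normalized $\eta_2$. The same triangle-inequality argument shows $\Pop_{\tau_\beta}|\eta_2|=|\eta_2|$, so by simplicity $|\eta_2|$ is a scalar multiple of $\varrho_0$ and I can write $\eta_2=\chi\varrho_0$ with $|\chi|=1$ a.e. Tracking the case of equality in the triangle inequality in \eqref{retardo} forces $\chi\circ\tau_\beta=\bar\lambda\,\chi$ a.e., whence by iteration $\chi\circ\tau_\beta^n=\bar\lambda^n\chi$. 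Picking a point of approximate continuity $x_0$ of $\chi$, choosing a small interval on which $\chi$ is nearly constant, and applying Lemma \ref{lem} once more to spread that interval over $]-1,1[$, one concludes that $\chi$ is a.e.\ constant, which forces $\lambda=1$, a contradiction. The main obstacle in executing this plan is the ``covering'' step: the argument would collapse without Lemma \ref{lem}, and that lemma is already the technically delicate ingredient because $\tau_\beta$ is only partially filling and not Markov in general, so the expected delicate accounting of what happens to the two incomplete edge branches in Case II of Lemma \ref{lem} is the real engine driving the whole theorem.
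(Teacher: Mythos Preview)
Your proposal is correct and follows essentially the same argument as the paper's proof: applying Theorem \ref{tf} to get a BV eigenfunction, using the triangle-inequality trick to reduce to $\varrho_0\ge0$, invoking Lemma \ref{lem} for strict positivity and for simplicity of the eigenvalue $1$, citing Proposition A.2.5 of \cite{Ios} for ergodicity, and finally writing $\eta_2=\chi\varrho_0$ with $\chi\circ\tau_\beta=\bar\lambda\chi$ and using Lemma \ref{lem} once more to force $\chi$ constant. The only cosmetic difference is that the paper obtains the small interval on which $\chi$ is nearly constant from right-continuity of the BV representatives of $\varrho_0$ and $\eta_2$ rather than from an approximate-continuity point of $\chi$, but this is the same idea.
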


\begin{rem} $(1<\beta<+\infty)$
A dynamical system $(I,\sigmaalg,\mu,\tau)$, where $\mu$ is finite and 
invariant under $\tau$, is said to be {\em exact} when we have 
\[
\lim_{n\rightarrow+\infty}\mu(\tau^n(A))=\mu(I),
\quad A\in\sigmaalg.
\]
It is known that when $(I,\sigmaalg,\mu,\tau)$ is exact, the $\tau$ possesses
strong mixing \cite{PY}, p. 125. 
If $\varrho_0$ stands for the density of the unique absolutely continuous 
$\tau_\beta$-invariant probability measure, then 
$([-1,1],\borelalg_{[-1,1]},\varrho_0\diff m,\tau_\beta)$ is exact. In 
particular,
\[
\lim_{n\rightarrow+\infty}m(\tau_\beta^n (A))=m([-1,1]),
\quad A\in\borelalg_{[-1,1]}.
\]
This can be obtained from Lemma \ref{lem} directly, by localizing around a
point of $A$ with density $1$ and using the distortion control available 
from the control on the second derivative; cf. \cite{hof}. 
This suggests another (shorter) way to obtain the assertion of Theorem
\ref{lemff}. We get that $\tau_\beta$ possesses strong mixing from the 
exactness, and then there can only be the eigenvalue $1$ and it must be 
simple.  
\end{rem}

\end{subsection}
\end{section}

\begin{section}{Proofs of the main results
}\label{pruebas}

\begin{subsection}{Proof of Theorem \ref{tpo}}\label{desc}
To prove Theorem \ref{tpo}, we will need to consider the space
$\BV([-\beta,\beta]\backslash[-1,1])$  of complex-valued integrable
functions defined on $[-\beta,-1]\cup[1,\beta]$ whose restrictions
to $[-\beta,-1]$ belong to $\BV([-\beta,-1])$ and whose restrictions to
$[1,\beta]$ belong to $\BV([1,\beta])$.

\begin{lem}
\label{ler}
$(1<\beta<+\infty)$ 
The operator $-\Sop^\ast \Rop_4^\ast+\Sop^\ast \Rop_1^\ast \Tope^\ast 
\Rop_3^\ast$ 
maps $\BV([-\beta,\beta]\backslash[-1,1])$ into 
$\BV([-1,1])$.

\begin{proof}
Let $f_2\in \BV([-\beta,\beta]\backslash[-1,1])$. 
The operator $\Sop^\ast:\,L^1(\R\backslash[-1,1])\to L^1([-1,1])$ is given by
\[
\Sop^\ast h(x)=\sum_{k\in\Z^\times}h(x+2k),\qquad x\in[-1,1].
\]
As 
\[
\Rop_4^\ast:\,L^1([-\beta,\beta]\backslash[-1,1])\to L^1(\R\backslash[-1,1])
\] 
extends the function to vanish where it was previously undefined, the function
$\Sop^\ast \Rop_4^\ast f_2$ is just a finite sum of functions of bounded 
variation, so that 
Then we have
\begin{equation}\label{despues}
\Sop^\ast \Rop_4^\ast f_2\in \BV([-1,1]).
\end{equation}
On the other hand, we can easily check that 
$\tilde{\Cop}^2=\Tope \Rop_1\Sop\Rop_2$, where 
$\tilde{\Cop}=\Cop_{\tilde \tau_\beta}$ is the Koopman operator 
associated to the transformation 
$\tilde\tau_\beta:[-\beta,\beta]\to[-\beta,\beta]$ given by
$\tilde\tau_\beta(0)=0$ and 
\[
\tilde{\tau}_\beta(x)=\{-\beta/x\}_2,\qquad x\in[-\beta,\beta]\setminus\{0\}.
\]
If $\tilde\Pop=\Pop_{\tilde\tau_\beta}:\,L^1([-\beta,\beta])\to
L^1([-\beta,\beta])$ is the corresponding Perron-Frobenius operator, whose
adjoint is $\tilde\Cop$, we find that
\begin{equation}
\label{k2}
\tilde{\Pop}^2=\Rop_2^\ast \Sop^\ast \Rop_1^\ast
\Tope^\ast. 
\end{equation} 
We easily check that $\tilde{\tau}_\beta$ satisfies conditions $(i)$
(with $m=2$) and $(ii)$ of Theorem. However, the mapping is only ``partially
filling''. But in view of Remark \ref{rem-5.1}, Theorem \ref{tf} remains
valid nevertheless. In particular, we have that 
$\tilde{\Pop}$ transforms $\BV([-\beta,\beta])$ into
itself. Since $\Rop_3^\ast$ and $\Rop_2^\ast$ are just extension by zero
operators, it follows from \eqref{k2} that
\begin{equation}
\label{ahora}
\Sop^\ast \Rop_1^\ast \Tope^\ast \Rop_3^\ast f_2\in \BV([-1,1]).
\end{equation}
Adding up, we get from \eqref{despues} and \eqref{ahora} that 
\[-\Sop^\ast \Rop_4^\ast f_2+\Sop^\ast \Rop_1^\ast 
\Tope^\ast \Rop_3^\ast f_2\in \BV([-1,1])
\] 
for every
$f_2\in \BV([-\beta,\beta]\backslash[-1,1])$.
The proof is complete.
\end{proof}
\end{lem}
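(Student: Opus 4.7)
The plan is to handle the two summands of $-\Sop^\ast\Rop_4^\ast+\Sop^\ast\Rop_1^\ast\Tope^\ast\Rop_3^\ast$ separately, showing each maps $\BV([-\beta,\beta]\setminus[-1,1])$ into $\BV([-1,1])$. Fix $f_2\in\BV([-\beta,\beta]\setminus[-1,1])$ throughout.

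For the first summand, I would start by writing down the pre-adjoint $\Sop^\ast$ explicitly. Since $\Sop g(x)=g(\{x\}_2)$ partitions $\R\setminus[-1,1]$ into the strips $]2k-1,2k+1[$ with $k\in\Z^\times$, the duality computation gives
\[
\Sop^\ast h(y)=\sum_{k\in\Z^\times}h(y+2k),\qquad y\in[-1,1],
\]
for $h\in L^1(\R\setminus[-1,1])$. Now $\Rop_4^\ast f_2$ is the extension of $f_2$ by zero from $[-\beta,\beta]\setminus[-1,1]$ to $\R\setminus[-1,1]$, so only those indices $k$ with $[2k-1,2k+1]\cap([-\beta,-1]\cup[1,\beta])\ne\emptyset$ contribute, namely $|k|\le\frac12(\beta+1)$. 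Thus $\Sop^\ast\Rop_4^\ast f_2$ is a \emph{finite} sum of translated restrictions of $f_2$, each individually of bounded variation, so it lies in $\BV([-1,1])$.

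For the second summand, the key idea is to recognize it as the restriction of an iterate of an auxiliary Perron--Frobenius operator. Introduce the transformation $\tilde\tau_\beta:[-\beta,\beta]\to[-\beta,\beta]$ defined by $\tilde\tau_\beta(0)=0$ and $\tilde\tau_\beta(x)=\{-\beta/x\}_2$ otherwise, with Koopman operator $\tilde\Cop$ and Perron--Frobenius operator $\tilde\Pop=\Pop_{\tilde\tau_\beta}$. A direct tracking of the restrictions and compositions shows that $\tilde\Cop^2=\Tope\Rop_1\Sop\Rop_2$, and taking pre-adjoints yields the crucial factorization
\[
\tilde\Pop^2=\Rop_2^\ast\Sop^\ast\Rop_1^\ast\Tope^\ast.
\]
Verify that $\tilde\tau_\beta$ is a partially filling $C^2$-smooth piecewise monotonic transformation satisfying the uniform expansiveness condition $(i)$ of Theorem~\ref{tf} with $m=2$ and the second-derivative condition $(ii)$ -- these estimates are essentially those already carried out for $\tau_\beta$ in Subsection~\ref{subsec-dyn}. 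Then, by Theorem~\ref{tf} together with Remark~\ref{rem-5.1}$(d)$ (which permits partially filling maps), $\tilde\Pop$ acts boundedly on $\BV([-\beta,\beta])$.

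With this in hand, observe that $\Rop_3^\ast f_2$ is simply $f_2$ extended by zero across $[-1,1]$, so it belongs to $\BV([-\beta,\beta])$. Applying the factorization,
\[
\tilde\Pop^2(\Rop_3^\ast f_2)=\Rop_2^\ast\Sop^\ast\Rop_1^\ast\Tope^\ast\Rop_3^\ast f_2\in\BV([-\beta,\beta]),
\]
and since $\Rop_2^\ast$ is extension by zero, the function $\Sop^\ast\Rop_1^\ast\Tope^\ast\Rop_3^\ast f_2$ is just the restriction of the bounded-variation function $\tilde\Pop^2(\Rop_3^\ast f_2)$ to $[-1,1]$, which lies in $\BV([-1,1])$. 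Adding the two summands gives the claim. The main technical obstacle is the verification of the factorization $\tilde\Pop^2=\Rop_2^\ast\Sop^\ast\Rop_1^\ast\Tope^\ast$ together with checking that $\tilde\tau_\beta$ meets the hypotheses of Theorem~\ref{tf}; once these are in place, the rest is bookkeeping with extension-by-zero and restriction operators.
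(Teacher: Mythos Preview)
Your proposal is correct and follows essentially the same approach as the paper's own proof: both handle $\Sop^\ast\Rop_4^\ast f_2$ directly as a finite sum of translates, and both treat the second summand by introducing the auxiliary transformation $\tilde\tau_\beta$ on $[-\beta,\beta]$, deriving the factorization $\tilde\Pop^2=\Rop_2^\ast\Sop^\ast\Rop_1^\ast\Tope^\ast$, and invoking Theorem~\ref{tf} (via Remark~\ref{rem-5.1}$(d)$) to get that $\tilde\Pop$ preserves $\BV([-\beta,\beta])$. Your write-up is in fact slightly more explicit than the paper's in spelling out the restriction step at the end.
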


We have now developed the tools needed to obtain Theorem \ref{tpo}. 
Actually, we formulate a more precise result. 

\begin{thm} $(1<\beta<+\infty)$ 
There exists a bounded operator $\Eop:\BV([-\beta,\beta]\backslash[-1,1])
\to L^1(\R)$ with the following properties:

\noindent{$(i)$} $\Eop$ is an extension operator, in the sense that 
$\Eop f(x)=f(x)$ almost everywhere on $[-\beta,\beta]\backslash[-1,1]$ for all 
$f\in\BV([-\beta,\beta]\backslash[-1,1])$.

\noindent{$(ii)$} The range of $\Eop$ is infinite-dimensional, and contained 
in $\mathcal{M}_\beta^\bot$.
\label{thm-8.2}
\end{thm}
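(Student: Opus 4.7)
The plan is to realize $\Eop f_2$ via Proposition \ref{necsuf}: given $f_2 \in \BV([-\beta,\beta]\setminus[-1,1])$, solve equation $(i)$ of that proposition for $f_1 \in \BV([-1,1])$, then define $f_3$ from $(ii)$, and set $\Eop f_2 := f_1 + f_2 + f_3$. First I would set $g := \Sop^\ast(-\Rop_4^\ast + \Rop_1^\ast \Tope^\ast \Rop_3^\ast)f_2$. By Lemma \ref{ler}, $g$ lies in $\BV([-1,1])$ and depends boundedly on $f_2$. A routine duality computation (pairing with the constant $1$ and using $\Sop 1 = 1$, $\Tope 1 = 1$) shows
\[
\int_{[-1,1]} g \, dm = -\int_{[-\beta,\beta]\setminus[-1,1]} f_2 \, dm + \int_{[-\beta,\beta]\setminus[-1,1]} f_2 \, dm = 0,
\]
which is of course a necessary compatibility condition, since $\Pop_{\tau_\beta}$ preserves $\int\cdot\,dm$ on $L^1([-1,1])$.

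Next I would invert $\Iop-\Pop_{\tau_\beta}^2$ on the zero-mean subspace via the spectral decomposition. By Theorem \ref{lemff}, $\lambda_1=1$ is the only eigenvalue of $\Pop_{\tau_\beta}$ on $\T$ and it is simple with eigenfunction $\varrho_0$; hence Theorem \ref{tf} yields
\[
\Pop_{\tau_\beta}^n = \Pop_{\tau_\beta,1} + \Zop_{\tau_\beta}^n, \qquad n = 1,2,3,\ldots,
\]
where $\Pop_{\tau_\beta,1} h = \langle h, 1\rangle_{[-1,1]} \varrho_0$ is the rank-one spectral projection and $\Zop_{\tau_\beta}$ has spectral radius strictly less than $1$ as an operator on $\BV([-1,1])$. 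Since $\langle g,1\rangle_{[-1,1]} = 0$, we have $\Pop_{\tau_\beta,1}g = 0$, and therefore
\[
f_1 := \sum_{n=0}^{+\infty} \Zop_{\tau_\beta}^{2n} g
\]
converges in $\BV([-1,1])$ and satisfies $(\Iop - \Pop_{\tau_\beta}^2)f_1 = g$, which is equation $(i)$. Moreover, $f_1$ depends boundedly on $f_2$ since the Neumann series converges in operator norm on $\BV([-1,1])$.

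Then I would define $f_3 := -\Tope^\ast \Rop_2^\ast f_1 - \Tope^\ast \Rop_3^\ast f_2 \in L^1(\R\setminus[-\beta,\beta])$, so that equation $(ii)$ holds by construction. Setting $\Eop f_2 := f_1 + f_2 + f_3$ gives an element of $\mathcal{M}_\beta^\bot$ by Proposition \ref{necsuf}, and the operator $\Eop: \BV([-\beta,\beta]\setminus[-1,1]) \to L^1(\R)$ is bounded because each constituent is bounded. Property $(i)$ of the theorem is automatic because $f_1$ is supported in $[-1,1]$ and $f_3$ in $\R\setminus[-\beta,\beta]$, so $\Eop f_2$ restricts to $f_2$ on $[-\beta,\beta]\setminus[-1,1]$. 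This restriction property makes $\Eop$ injective, and as $\BV([-\beta,\beta]\setminus[-1,1])$ is infinite-dimensional, so is the range of $\Eop$, giving property $(ii)$.

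The main obstacle is the inversion of $\Iop - \Pop_{\tau_\beta}^2$ on the codimension-one zero-mean subspace of $\BV([-1,1])$. This is the delicate point since, on $L^1([-1,1])$ alone, there is no reason for the spectrum to stay away from the unit circle near $\lambda = 1$. The resolution uses the full strength of Theorems \ref{tf} and \ref{lemff}: the former provides the quasi-compactness of $\Pop_{\tau_\beta}$ on $\BV$ (spectral gap), and the latter ensures that $\lambda_1=1$ is the unique peripheral eigenvalue and is simple; together these are exactly what makes the Neumann series $\sum \Zop_{\tau_\beta}^{2n}$ convergent in $\BV$-norm on the zero-mean subspace.
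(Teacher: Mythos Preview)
Your proposal is correct and follows essentially the same route as the paper: define $g$ via Lemma \ref{ler}, verify $\langle g,1\rangle_{[-1,1]}=0$, invert $\Iop-\Zop_{\tau_\beta}^2$ on $\BV([-1,1])$ by the Neumann series afforded by the spectral gap (Theorems \ref{tf} and \ref{lemff}), set $f_3$ from condition $(ii)$, and invoke Proposition \ref{necsuf}. The only step you leave implicit is that $\langle f_1,1\rangle_{[-1,1]}=0$ (needed to pass from $(\Iop-\Zop_{\tau_\beta}^2)f_1=g$ to $(\Iop-\Pop_{\tau_\beta}^2)f_1=g$), but this follows at once from $\langle g,1\rangle_{[-1,1]}=0$ together with $\langle\Zop_{\tau_\beta}^n h,1\rangle_{[-1,1]}=0$ for $n\ge1$.
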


\begin{proof}
To simplify the notation, we write $\Pop$ in place of $\Pop_{\tau_\beta}$.
By Theorem \ref{tf} (valid by Remark \ref{rem-5.1} $(d)$) together with
Theorem \ref{lemff}, we have the following representation for the iterates of 
$\Pop$,
\begin{equation}
\label{exp}
\Pop^n h =\{\langle h,1\rangle_{[-1,1]}\}\,\varrho_0 + 
\Zop^n h,\qquad h\in L^1([-1,1]),\,\,\,n=1,2,3,\ldots,
\end{equation}
where we write $\Zop$ in place of $\Zop_{\tau_\beta}$. Here, 
$\varrho_0\ge0$ is the density for the absolutely continuous 
$\tau_\beta$-invariant probability measure on $[-1,1]$; it has 
$\varrho_0\in\BV([-1,1])$. 
The operator $\Zop$ acts on $\BV([-1,1])$ and its
spectral radius is $<1$. 
By applying \eqref{exp} to $h=\varrho_0$, it is evident that 
$\Zop\varrho_0=0$. Next, if we note that
\[
\langle\Pop^n h,1\rangle_{[-1,1]}=\langle h,1\rangle_{[-1,1]},\qquad
n=1,2,3,\ldots,
\]
which is one of the standard properties of Perron-Frobenius operators
(e.g., we can use that $1$ is invariant under the Koopman operator), 
we also get that
\begin{equation}
\label{zerotal}
\langle\Zop^n h,1\rangle_{[-1,1]}=0,\qquad h\in L^1([-1,1]),\,\,\,
n=1,2,3,\ldots.
\end{equation}
Let us take an arbitrary element $f\in 
\BV([-\beta,\beta]\backslash[-1,1])$. From Lemma \ref{ler} above, we 
know that
\[
-\Sop^\ast \Rop_4^\ast f+\Sop^\ast \Rop_1^\ast \Tope^\ast 
\Rop_3^\ast f\in \BV([-1,1]).
\]
Since $\Zop$ has spectral radius $<1$ on $\BV([-1,1])$, 
the Neumann series
\[
(\Iop-\Zop^2)^{-1}=\Iop+\Zop^2+\Zop^4+\ldots
\]
converges to a bounded operator on $\BV([-1,1])$, and we may put
\begin{equation}
\label{auxiliar}
\Eop_1 f:=(\Iop-\Zop^2)^{-1}\Sop^\ast
\big\{-\Rop_4^\ast f+\Rop_1^\ast \Tope^\ast \Rop_3^\ast f\big\}\in\BV([-1,1]).
\end{equation}
We observe that 
\[
\big\langle-\Sop^\ast \Rop_4^\ast f+\Sop^\ast \Rop_1^\ast 
\Tope^\ast \Rop_3^\ast f,1\big\rangle_{[-1,1]}
=\langle f,1\rangle_{[-\beta,\beta]\backslash[-1,1]}
-\langle f,1\rangle_{[-\beta,\beta]\backslash[-1,1]}=0,
\]
and so, by \eqref{zerotal} and \eqref{auxiliar},
\begin{equation}
\label{eq-meanzero}
\langle \Eop_1 f,1\rangle_{[-1,1]}=\langle 
(\Iop-\Zop^2)\Eop_1 f,1\rangle_{[-1,1]}
=0.
\end{equation}
Finally, we put
\begin{equation}
\label{auxiliar2}
\Eop_3 f:=-\Tope^\ast( \Rop_2^\ast \Eop_1 +\Rop_3^\ast) f
\in L^1(\R\backslash[-\beta,\beta]).
\end{equation}
We define the operator $\Eop$ to be the mapping 
\[
\Eop:\,\BV([-\beta,\beta]\backslash[-1,1])\to L^1(\R),\,\,\,f
\mapsto f+\Eop_1 f+\Eop_3 f,
\]
with the understanding that each of the functions $f,\Eop_1 f,\Eop_2 f$ is 
extended to $\R$ by putting it equal to $0$ where it was previously undefined.
Then $\Eop$ is clearly bounded and linear, and has the property $(i)$.
In view of \eqref{eq-meanzero} and \eqref{exp}, we have 
$\Pop^n \Eop_1 f=\Zop^n \Eop_1 f$ for $n=1,2,3,\ldots$. This 
means that in condition $(i)$ of Proposition \ref{necsuf}, we may replace 
$\Pop^2$ by $\Zop^2$, and we just obtain the condition Proposition 
\ref{necsuf} $(i)$
rather immediately from \eqref{auxiliar}. The condition Proposition 
\ref{necsuf} $(ii)$ is immediate from \eqref{auxiliar2}. By Proposition 
\ref{necsuf}, we have that 
\[
\mathrm{im}\,\Eop\subset\mathcal{M}_\beta^\bot,
\] 
which proves $(ii)$, since $\mathrm{im}\,\Eop$ -- the range of $\Eop$ --
must be infinite-dimensional (the restriction to 
$[-\beta,\beta]\backslash[-1,1]$ of the range is
infinite-dimensional, being the space of all functions of bounded variation).
This completes the proof of Theorem \ref{tpo}.
\end{proof}

\begin{proof}[Proof of Theorem \ref{tpo}]
This is immediate from Theorem \ref{thm-8.2} $(ii)$.
\end{proof}

The next theorem shows that the range of $\Eop$ we constructed in
in the proof of Theorem \ref{tpo} is a subspace of the weighted space
$L^2(\R,\omega)$, where $\omega(x):=1+x^2$, with finite norm 
\[
\|f\|^2_{L^2(\R,\omega)}:=
\int_\R |f|^2\omega\,\diff m<+\infty.
\]

\begin{prop}
\label{nosecuanto}
The range of $\Eop$ is contained in $L^2(\R,\omega)$.

\begin{proof}
Let $f\in\BV([-\beta,\beta]\backslash[-1,1])$. Following the proof
of Theorem \ref{thm-8.2}, we see that $\Eop_1 f\in \BV([-1,1])$,
so that the restriction of $\Eop f$ to the interval $[-\beta,\beta]$
has bounded variation. In particular, $\Eop f$ is bounded on $[-\beta,\beta]$,
so we just to estimate the weighted $L^2$-norm integral on 
$\R\backslash[-\beta,\beta]$. The restriction of $\Eop f$ to 
$\R\backslash[-\beta,\beta]$ equals $\Eop_3f$, given by \eqref{auxiliar}, 
which we understand as $\Eop_3f=-\Tope^\ast h$. The operator 
$\Tope^\ast:\,L^1([-\beta,\beta])\to L^1(\R\backslash[-\beta,\beta])$ 
is given explicitly by
\begin{equation}
\label{tbstrella}
\Tope^\ast h(x)=\sum_{j\in\Z^\times} 
\frac{\beta^2}{(\beta+2jx)^2}\,h\left(\frac{\beta x}{\beta+2jx}\right),
\end{equation}
with the understanding that $h$ is extended to vanish off $[-\beta,\beta]$.
We write
\[
h_j(x)=h\left(\frac{\beta x}{\beta+2jx}\right)\frac{\beta^2}{(\beta+2jx)^2},
\qquad x\in\R\backslash[-\beta,\beta].
\]
As $h$ is bounded, it is clear from \eqref{tbstrella} that $\Eop_3f$ is 
bounded on $\R\backslash[-\beta,\beta]$. Since $\Eop_3 f$ is also summable, 
we must have
\[
\int_{\R\backslash[-\beta,\beta]}|\Eop_3 f(x)|^2\,\diff x<+\infty.
\]
Hence, it is enough to show that
\begin{equation}\label{poiuy}
\int_{\R\backslash[-\beta,\beta]}|\Eop_3 f(x)|^2x^2\,\diff x=
\int_{\R\backslash[-\beta,\beta]}
\Big|\sum_{j\in\Z^\times}
h_j(x)\Big|^2x^2\,
\diff x<+\infty.
\end{equation}
A straightforward computation shows that
\[
\int_{\R\backslash[-\beta,\beta]}|h_j(x)|^2x^2\,\diff x=
\int_{\frac{\beta}{2j+1}}^{\frac{\beta}{2j-1}}|h(x)|^2x^2\,\diff x
\leq \|h\|_{L^\infty([-\beta,\beta])}^2
\int_{\frac{\beta}{2j+1}}^{\frac{\beta}{2j-1}}
x^2\diff x\leq 
\frac{\beta^2}{j^4}\,\|h\|_{L^\infty([-\beta,\beta])}^2.
\]
As a consequence, we obtain
\[
\sum_{j\in\Z^\times}
\bigg\{\int_{\R\backslash[-\beta,\beta]}|h_j(x)|^2x^2\,\diff x
\bigg\}^{\frac{1}{2}}<+\infty,
\]
which entails \eqref{poiuy}. The proof is complete.
\end{proof}
\end{prop}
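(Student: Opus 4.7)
The plan is to split $\R$ into the bounded part $[-\beta,\beta]$ and the unbounded part $\R\setminus[-\beta,\beta]$, and handle each separately. For the bounded part, the weight $\omega(x)=1+x^2$ is bounded above by $1+\beta^2$, so it suffices to show that $\Eop f$ is in $L^2([-\beta,\beta])$. But from the construction in Theorem \ref{thm-8.2}, we have $\Eop f|_{[-1,1]}=\Eop_1 f\in\BV([-1,1])$ and $\Eop f|_{[-\beta,\beta]\setminus[-1,1]}=f\in\BV([-\beta,\beta]\setminus[-1,1])$, both of which are in $L^\infty$ hence trivially in $L^2$ on a bounded interval.

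For the unbounded part, the restriction of $\Eop f$ equals $\Eop_3 f=-\Tope^\ast h$, where $h:=\Rop_2^\ast \Eop_1 f+\Rop_3^\ast f\in L^\infty([-\beta,\beta])$ (since both $\Eop_1 f$ and $f$ are bounded by the previous step). The operator $\Tope^\ast$ has the explicit series representation
\[
\Tope^\ast h(x)=\sum_{j\in\Z^\times}\frac{\beta^2}{(\beta+2jx)^2}\,h\!\left(\frac{\beta x}{\beta+2jx}\right),\qquad x\in\R\setminus[-\beta,\beta],
\]
where $h$ is understood to vanish off $[-\beta,\beta]$. The crucial observation is that, setting $h_j(x):=\frac{\beta^2}{(\beta+2jx)^2}\,h\!\left(\frac{\beta x}{\beta+2jx}\right)$, the functions $h_j$ have essentially disjoint supports, since the map $x\mapsto \beta x/(\beta+2jx)$ sends $\R\setminus[-\beta,\beta]$ bijectively onto a subinterval of $[-\beta,\beta]$ corresponding to a particular branch of $\tau_\beta$, and different $j$ correspond to disjoint branches.

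The finiteness of the unweighted $L^2$-norm on $\R\setminus[-\beta,\beta]$ follows from the fact that $\Eop_3 f$ is simultaneously bounded (by the series bound) and in $L^1$ (as $\Tope^\ast$ is a contraction). The essential estimate is therefore the weighted one: to control $\int_{\R\setminus[-\beta,\beta]}|\Eop_3 f(x)|^2 x^2\,\diff x$, I would perform the change of variables $y=\beta x/(\beta+2jx)$ in each piece, which yields an integral over an interval of size $\sim\beta/j^2$ on which $|h(y)|\le\|h\|_\infty$ and $x^2\le \beta^2/(2j-1)^2\sim \beta^2/j^2$. A direct computation gives
\[
\int_{\R\setminus[-\beta,\beta]}|h_j(x)|^2 x^2\,\diff x\le \frac{\beta^2}{j^4}\|h\|_{L^\infty([-\beta,\beta])}^2.
\]
Taking square roots and summing over $j\in\Z^\times$ converges, and combined with the disjointness of supports this yields $\Eop_3 f\in L^2(\R\setminus[-\beta,\beta],\omega)$.

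The main obstacle will be verifying the essentially disjoint support property cleanly and justifying the sum-of-square-roots bound: one should phrase it either using the disjoint-support interpretation (so that $\|\sum_j h_j\cdot|x|\|_2^2=\sum_j \|h_j\cdot|x|\|_2^2$) or via Minkowski's inequality on the sum, where the $j^{-2}$ decay of the $L^2$-norms with weight $x^2$ makes the series absolutely convergent. Either route reduces the problem to the elementary integral estimate above, which exploits only the boundedness of $h$ and the $1/j^2$ size of the preimage intervals.
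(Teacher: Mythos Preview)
Your approach is essentially the paper's: reduce to the exterior piece $\Eop_3 f=-\Tope^\ast h$ with $h$ bounded, note boundedness plus summability gives the unweighted $L^2$ bound, and for the $x^2$-weighted part obtain $\int|h_j(x)|^2x^2\,\diff x\le\beta^2 j^{-4}\|h\|_\infty^2$ and sum the square roots via Minkowski. That route is correct and matches the paper exactly.

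One point needs correcting, though: the claim that the $h_j$ have essentially disjoint supports in $x$-space is false. For any fixed $x_0$ with $|x_0|>\beta$ one has $\beta x_0/(\beta+2jx_0)=\beta/(2j+\beta/x_0)$, and since $|\beta/x_0|<1$ this lands in $[-\beta,\beta]$ for \emph{every} $j\in\Z^\times$; so if $h$ has full support, all the $h_j(x_0)$ are nonzero simultaneously. What is true is that the \emph{images} of the maps $x\mapsto\beta x/(\beta+2jx)$ are the disjoint intervals $]\beta/(2j+1),\beta/(2j-1)[$, which is what the change-of-variables computation exploits, but that is orthogonality in the $y$-variable, not disjointness of the $h_j$ as functions of $x$. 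So drop the disjoint-support alternative and rely solely on Minkowski; the $j^{-2}$ decay of the individual norms makes the series converge, and that is all you need.
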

\noindent

\begin{rem}
The range of the operator $\Eop$ is a proper subspace of 
$\mathcal{M}_\beta^\bot$, even if we consider the closure of the range. 
Actually, if in the context of Proposition \ref{necsuf} we plug in 
$f_1:=\varrho_0$ (notation as in Theorem \ref{lemff}) and $f_2:=0$, 
and put $f_3:=-\Tope^\ast\Rop_2\varrho_0$, we obtain a function 
$\psi_0:=f_1+f_2+f_3=\varrho_0-\Tope^\ast\Rop_2\varrho_0$ which is in
the annihilator $\mathcal{M}_\beta^\bot$, but $\psi_0$ is not in the closure
of the range of $\Eop$. So a natural question is whether
\[
\mathrm{span}\{\psi_0\}\oplus\mathrm{clos}[\mathrm{im}\Eop]
=\mathcal{M}_\beta^\bot
\]
holds. This would be quite reasonable from the point of view of the proof of 
Proposition \ref{necsuf}.
\end{rem}
\end{subsection}

\begin{subsection}{ Proof of Theorem \ref{tpo1001}}
We turn to the proof of Theorem \ref{tpo1001}. We know from \cite{HM2}
that $\mathcal{N}_\beta^\bot$ is one-dimensional for $\beta=2$. 
We shall write $\beta=2\gamma$, and suppose that $\gamma>1$. We need the 
operators
\[
\Sop_+:\,L^\infty([0,1])\to L^\infty([1,+\infty[),\quad
\Sop_+ g(x):=g(\{x\}_1),
\]
and
\[
\Tope_+:\,L^\infty([\gamma,+\infty[)\to L^\infty([0,\gamma]),\quad
\Tope_+ g(x):=g\bigg(\frac{\gamma}{\{\gamma/x\}_1}\bigg).
\]
Their pre-adjoints map contractively 
\[
\Sop_+^\ast:\,L^1([1,+\infty[)\to L^1([0,1]),\quad
\Tope_+^\ast:\,L^1([0,\gamma])\to L^1([\gamma,+\infty[). 
\]
We need the following restriction operators:
\[
\begin{array}{llll}
\Rop_5&:&L^\infty([1,+\infty[)\longrightarrow 
L^\infty([\gamma,+\infty[), \\
\Rop_6&:&L^\infty([0,\gamma])\longrightarrow L^\infty([0,1]),\\
\Rop_7&:&L^\infty([0,\gamma])\longrightarrow 
L^\infty([1,\gamma]),\\
\Rop_8&:&L^\infty([1,+\infty[)\longrightarrow 
L^\infty([1,\gamma]).
\end{array}
\]
and their pre-adjoints $\Rop_5^\ast,\Rop_6^\ast,\Rop_7^\ast,\Rop_8^\ast$,
which act on the corresponding $L^1$-spaces. We let 
$\Pop_+:=\Pop_{\sigmat_\gamma}$ denote the Perron-Frobenius operator associated
to the Gauss-type transformation $\sigmat_\gamma:[0,1[\to[0,1[$, where 
$\sigmat_\gamma(0):=0$ and $\sigmat_\gamma(x):=\{\gamma/x\}_1$ for $x\in]0,1[$;
cf. Section \ref{Branch}.  The analogue of \eqref{pby} in this context is
\begin{equation}
\Pop_{+}^2=\Sop_+^\ast \Rop_5^\ast\Tope_+^\ast \Rop_6^\ast.
\label{eq-pby2}
\end{equation}
We also have an analogue of Proposition \ref{necsuf}.

\begin{prop}
\label{necsuf2} 
$(1<\gamma<+\infty)$ 
Let $f\in L^1([0,+\infty[)$ be written as 
\[
f=f_1+f_2+f_3,
\]
where $f_1\in L^1([0,1[)$, $f_2\in L^1([1,\gamma[)$, and
$f_3\in L^1([\gamma,+\infty[)$. Then 
$f\in\mathcal{N}_{2\gamma}^\bot$ if and only if
\begin{align*}
(\Iop-\Pop_{+}^2)f_1&=\Sop_+^\ast(-\Rop_8^\ast+ 
\Rop_5^\ast\Tope_+^\ast \Rop_7^\ast) f_2,
\tag{$i$}
\\
f_3&=-\Tope_+^\ast\Rop_6^\ast f_1-\Tope_+^\ast \Rop_7^\ast f_2,
\tag{$ii$}
\end{align*}
where $\Iop$ is the identity on $L^1([0,1])$.
\end{prop}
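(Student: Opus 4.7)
The plan is to mimic the proof of Proposition \ref{necsuf} in the branch setting, using the positive-axis analogues of the operators. First I would record the basic identifications of the two distinguished weak-star closed subspaces of $L^\infty(\R_+)$ in terms of the operators $\Sop_+$ and $\Tope_+$, namely
\[
L^\infty_1(\R_+)=\{g+\Sop_+g:\,g\in L^\infty([0,1])\},\qquad
L^\infty_{\langle 2\gamma\rangle}(\R_+)=\{h+\Tope_+h:\,h\in L^\infty([\gamma,+\infty[)\},
\]
where $L^\infty_1(\R_+)$ is the weak-star span of $\{\e^{2\imag m\pi x}\}_{m\in\Z}$ and $L^\infty_{\langle 2\gamma\rangle}(\R_+)$ is the weak-star span of $\{\e^{\imag n\pi\,2\gamma/x}\}_{n\in\Z}$. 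This is completely parallel to \eqref{1}--\eqref{2} and uses only the $1$-periodicity of the first family and the $1$-periodicity of the second family in the variable $\gamma/x$.

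Next, I would write out the defining orthogonality condition $f\in\mathcal{N}_{2\gamma}^\bot$ using these representations: namely, for $f=f_1+f_2+f_3$ with the stated supports,
\[
\langle f_1,g\rangle_{[0,1]}+\langle f_2+f_3,\Sop_+g\rangle_{[1,+\infty[}=0,\qquad g\in L^\infty([0,1]),
\]
and
\[
\langle f_3,h\rangle_{[\gamma,+\infty[}+\langle f_1+f_2,\Tope_+h\rangle_{[0,\gamma]}=0,\qquad h\in L^\infty([\gamma,+\infty[).
\]
Taking pre-adjoints (and inserting the restriction operators $\Rop_5,\ldots,\Rop_8$ to track the supports) converts these identities into
\[
f_1=-\Sop_+^\ast\Rop_8^\ast f_2-\Sop_+^\ast\Rop_5^\ast f_3,\qquad f_3=-\Tope_+^\ast\Rop_6^\ast f_1-\Tope_+^\ast\Rop_7^\ast f_2.
\]
The second of these is exactly condition $(ii)$. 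To obtain condition $(i)$, I would substitute the formula for $f_3$ into the formula for $f_1$ and invoke the operator identity \eqref{eq-pby2}, which gives $\Sop_+^\ast\Rop_5^\ast\Tope_+^\ast\Rop_6^\ast=\Pop_+^2$, yielding
\[
f_1=\Pop_+^2 f_1+\Sop_+^\ast(-\Rop_8^\ast+\Rop_5^\ast\Tope_+^\ast\Rop_7^\ast)f_2,
\]
which is condition $(i)$. Conversely, $(i)$ and $(ii)$ together imply the two pre-adjoint identities, and hence $f\in\mathcal{N}_{2\gamma}^\bot$, so the characterization is an equivalence.

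The only slightly delicate point is to verify \eqref{eq-pby2} itself (the analogue $\tilde\Cop_+^{\,2}=\Tope_+\Rop_5\Sop_+\Rop_6$ at the level of Koopman operators, whose pre-adjoint is \eqref{eq-pby2}). This amounts to checking that $\sigmat_\gamma^2(x)=\{\gamma/\{\gamma/x\}_1\}_1$ for $x\in[0,1[$ and that the images land in the right intervals, which is routine since $\sigmat_\gamma(x)=\{\gamma/x\}_1$ takes $[0,1[$ into itself, while the intermediate value $\gamma/x$ lies in $[\gamma,+\infty[$. With that verification in hand, all the remaining steps are routine duality manipulations, so I do not anticipate a genuine obstacle; the statement is the positive-axis twin of Proposition \ref{necsuf} and its proof is structurally identical.
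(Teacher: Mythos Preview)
Your proposal is correct and follows precisely the approach the paper intends: the paper simply states that the proof ``is completely analogous to that of Proposition \ref{necsuf}, and we omit it,'' and you have carried out that analogy in the expected way. One tiny slip: in your parenthetical the Koopman-level identity should read $\Cop_{\sigmat_\gamma}^{\,2}=\Rop_6\Tope_+\Rop_5\Sop_+$ (so that taking pre-adjoints yields \eqref{eq-pby2} in the stated order), not $\Tope_+\Rop_5\Sop_+\Rop_6$; this does not affect the argument since you invoke \eqref{eq-pby2} directly.
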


The proof is completely analogous to that of Proposition \ref{necsuf}, and
we omit it. Since $\gamma>1$, the transformation $\sigmat_\gamma$ is uniformly 
expansive, and if we analyze its spectral properties on $\mathrm{BV}([0,1])$,
we see that $\Pop_+$ has a spectral gap. More precisely, in the
context of Theorem \ref{tf} [valid by Remark \ref{rem-5.1} $(d)$], 
we can show that $\Spec(\Pop_+)\cap\T=\{1\}$ and that the eigenvalue 
$1$ is simple. This leads to the following assertion, analogous to 
Theorem \ref{thm-8.2}. Again, we omit the proof. 

\begin{thm} $(1<\gamma<+\infty)$ 
There exists a bounded operator $\Eop_+:\BV([1,\gamma])
\to L^1([0,+\infty[)$ with the following properties:

\noindent{$(i)$} $\Eop_+$ is an extension operator, in the sense that 
$\Eop_+ f(x)=f(x)$ almost everywhere on $[1,\gamma]$ for all 
$f\in\BV([1,\gamma])$.

\noindent{$(ii)$} The range of $\Eop_+$ is infinite-dimensional, and contained 
in $\mathcal{N}_{2\gamma}^\bot$.
\label{thm-8.6}
\end{thm}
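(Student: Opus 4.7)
\medskip

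\noindent\textbf{Proof proposal for Theorem \ref{thm-8.6}.} The plan is to follow, almost verbatim, the construction used in the proof of Theorem \ref{thm-8.2}, simply replacing each ingredient by its half-line analogue. Given $f_2\in\BV([1,\gamma])$, I want to produce $f_1\in L^1([0,1])$ and $f_3\in L^1([\gamma,+\infty[)$ such that $f:=f_1+f_2+f_3$ satisfies the two conditions of Proposition \ref{necsuf2}, and then set $\Eop_+ f_2:=f$. The two conditions uncouple: condition $(ii)$ merely \emph{defines} $f_3$ in terms of $f_1$ and $f_2$, so the whole work is to solve condition $(i)$ for $f_1$ in terms of $f_2$.

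First I would establish the $\BV$-analogue of Lemma \ref{ler}, namely that the operator $\Sop_+^\ast(-\Rop_8^\ast+\Rop_5^\ast\Tope_+^\ast\Rop_7^\ast)$ maps $\BV([1,\gamma])$ boundedly into $\BV([0,1])$. The term $\Sop_+^\ast\Rop_8^\ast f_2$ is a finite sum of pulled-back pieces of $f_2$ (because $\Sop_+^\ast h(x)=\sum_{k\ge 1}h(x+k)$ applied to a function supported on $[1,\gamma]$ has only finitely many non-zero contributions on $[0,1]$), hence lies in $\BV([0,1])$. For the second term I would use the identity $\tilde\Pop_+^{\,2}=\Rop_6^\ast\Sop_+^\ast\Rop_5^\ast\Tope_+^\ast$, where $\tilde\Pop_+$ is the Perron--Frobenius operator associated with the extension $\tilde\sigmat_\gamma(x)=\{\gamma/x\}_1$ on $[0,\gamma]$; since that map again meets conditions $(i)$--$(ii)$ of Theorem \ref{tf}, its Perron--Frobenius operator preserves $\BV([0,\gamma])$, and extension-by-zero operators $\Rop_j^\ast$ are harmless for bounded variation.

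Next I would invoke the spectral gap for $\Pop_+=\Pop_{\sigmat_\gamma}$ acting on $\BV([0,1])$: by the preamble to the theorem, for $\gamma>1$ we have the decomposition
\[
\Pop_+^n h=\{\langle h,1\rangle_{[0,1]}\}\,\varrho_{0,+}+\Zop_+^n h,\qquad h\in L^1([0,1]),\,n\ge 1,
\]
where $\varrho_{0,+}$ is the density of the unique $\sigmat_\gamma$-invariant absolutely continuous probability measure and $\Zop_+$ has spectral radius $<1$ on $\BV([0,1])$. A direct computation, exactly as in the display leading to \eqref{eq-meanzero}, shows
\[
\langle -\Sop_+^\ast\Rop_8^\ast f_2+\Sop_+^\ast\Rop_5^\ast\Tope_+^\ast\Rop_7^\ast f_2,1\rangle_{[0,1]}=\langle f_2,1\rangle_{[1,\gamma]}-\langle f_2,1\rangle_{[1,\gamma]}=0,
\]
so the right-hand side of condition $(i)$ in Proposition \ref{necsuf2} is annihilated by $\langle\cdot,1\rangle_{[0,1]}$. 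Therefore on the mean-zero hyperplane we may replace $\Iop-\Pop_+^2$ by $\Iop-\Zop_+^2$, which is invertible on $\BV([0,1])$ by a Neumann series. I set
\[
\Eop_{+,1} f_2:=(\Iop-\Zop_+^2)^{-1}\Sop_+^\ast\bigl\{-\Rop_8^\ast f_2+\Rop_5^\ast\Tope_+^\ast\Rop_7^\ast f_2\bigr\}\in\BV([0,1]),
\]
and then $\Eop_{+,3} f_2:=-\Tope_+^\ast(\Rop_6^\ast\Eop_{+,1}+\Rop_7^\ast)f_2\in L^1([\gamma,+\infty[)$, and finally
\[
\Eop_+ f_2:=f_2+\Eop_{+,1} f_2+\Eop_{+,3} f_2,
\]
where each summand is extended by zero off its domain. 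Property $(i)$ is immediate from the definition. For property $(ii)$, Proposition \ref{necsuf2} gives $\mathrm{im}\,\Eop_+\subset\mathcal{N}_{2\gamma}^\bot$; infinite-dimensionality follows because the restriction of $\Eop_+ f_2$ to $[1,\gamma]$ is $f_2$, and $\BV([1,\gamma])$ is infinite-dimensional. I expect the only nuisance to be the $\BV$-bookkeeping for the first step (showing the boundedness of $\Sop_+^\ast\Rop_5^\ast\Tope_+^\ast\Rop_7^\ast$ on $\BV$), but this is routine once one exploits the identity $\tilde\Pop_+^{\,2}=\Rop_6^\ast\Sop_+^\ast\Rop_5^\ast\Tope_+^\ast$ exactly as was done in the hyperbola case.
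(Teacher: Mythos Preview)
Your proposal is correct and matches the paper's approach exactly: the paper omits the proof of Theorem \ref{thm-8.6} precisely because it is a line-by-line transcription of the proof of Theorem \ref{thm-8.2}, with the operators $\Sop,\Tope,\Rop_1,\ldots,\Rop_4,\Pop_{\tau_\beta},\Zop$ replaced by their half-line counterparts $\Sop_+,\Tope_+,\Rop_5,\ldots,\Rop_8,\Pop_+,\Zop_+$, and Proposition \ref{necsuf} replaced by Proposition \ref{necsuf2}. Your outline of the $\BV$-mapping lemma (the analogue of Lemma \ref{ler}) via the identity $\tilde\Pop_+^{\,2}=\Rop_6^\ast\Sop_+^\ast\Rop_5^\ast\Tope_+^\ast$, the mean-zero verification, and the Neumann-series inversion of $\Iop-\Zop_+^2$ are all exactly what the paper has in mind.
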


\begin{proof}[Proof of Theorem \ref{tpo1001}]
This is immediate from Theorem \ref{thm-8.6} $(ii)$.
\end{proof}
\end{subsection}
\end{section}

\begin{section}{Final remarks}\label{applic}

\begin{subsection}{A related problem in the Hardy space of the unit disk}
An algebra of inner functions in the Hardy spaces $H^p$ of the unit disk was 
considered by Matheson and Stessin in \cite{ms}. This algebra depends on a 
parameter $\beta>0$, and can be assumed to be
\begin{equation}
\label{algebra}
\mathcal{A}_\beta=\mbox{span}\left\{\e^{-\pi m\frac{1-z}{1+z}}\,
\e^{-\pi n\beta\frac{1+z}{1-z}}:\,m,n=0,1,2,\ldots\right\},
\end{equation}
where the span is in the sense of finite linear combinations.
The main result \cite{ms} asserts that for any finite $p$, 
$\mathcal{A}_\beta$ is dense in $H^p$ for $\beta<1$, while it fails to
be dense for $\beta>1$. It is natural to also consider the (smaller) space
\begin{equation}
\label{subspace}
\mathcal{S}_\beta=\mbox{span}\left\{\e^{-\pi m\frac{1-z}{1+z}},
\,\e^{-\pi n\beta\frac{1+z}{1-z}}:\,m,n=0,1,2,\ldots\right\}.
\end{equation}
For $n=0,1,2,\ldots,$, let $\varphi^n$ denote the function 
\[
\varphi^n(z):=\e^{-\pi n\frac{1-z}{1+z}}\,
\e^{-\pi n\beta\frac{1+z}{1-z}},\qquad z\in\D.
\]
Then we clearly have the decomposition
\[
\mathcal{A}_\beta=\mathcal{S}_\beta+\varphi\mathcal{S}_\beta+
\varphi^2\mathcal{S}_\beta+\ldots,
\]
in the sense of finite sums.
As a consequence of the main result in \cite{hh}, Theorem \ref{thh}, 
we know that $\mathcal{S}_1$ is dense in the weak-star topology of BMOA, and
hence in the norm topology in $H^p$ for all finite $p$. 
It would appear rather plausible that the codimension of the $H^p$-closure
of $\mathcal{A}_\beta$ might be finite $\beta>1$, as there are only finitely
many points in the unit disk which are not separated by the generating inner
functions. However, so far, we cannot provide an answer to this question. 
With the aid of Theorem \ref{tpo} we can however prove that the $H^2$-closure
of the space $\mathcal{S}_\beta$ has infinite codimension for $\beta>1$. 
An outline of the argument is provided below.

We assume $\beta>1$, and pick an arbitrary natural number $N\ge1$. 
By Theorem \ref{thm-8.2}, we can pick linearly independent elements 
$f_1,\ldots,f_N\in\mathcal{M}_\beta^\bot$ in the range of 
$\Eop$, which all vanish on some proper subinterval of $[1,\beta]$, say on 
$[1,\beta']$, where $1<\beta'<\beta$. We define linearly independent 
functions $\tilde f_j$, $j=1,\ldots,N$, on the unit circle $\T$ as follows:
\[
\frac{1}{1-\imag x}\,\tilde{f_j}\left(\frac{1+\imag x}{1-\imag x}\right)
=(1+ix)\,f_j(x),\qquad j=1,\ldots,N.
\]
Next, by Proposition \ref{nosecuanto}, we have that the functions 
$\tilde{f_j}$ belong to $L^2(\T)$ and that they all vanish on a certain  
arc of $\T$. Let $\Qop: L^2(\T)\to H^2$ denote the orthogonal (Szeg\"o)
projection. We claim that the projected functions $\Qop\tilde f_j$,
$j=1,\ldots,N$, are linearly independent. Indeed, a relation of the form
\[
\sum_{j=1}^N c_j\proj\tilde f_j=0,\qquad c_j\in\C,
\]
implies that the function $\tilde f_{\mathrm{sum}}:=
\sum_{j=1}^N c_j\tilde f_j$ belongs to 
$L^2(\T)\ominus H^2=\mathrm{conj}\,H^2_0$, where ``conj'' means complex 
conjugation, and $H^2_0$ is the subspace of $H^2$ of functions that vanish
at the origin. Now the function $\tilde f_{\mathrm{sum}}$ is in 
$\mathrm{conj}\,H^2_0$ and vanishes along an arc of the circle $\T$, so by
e.g. Privalov's theorem, $\tilde f_{\mathrm{sum}}=0$ on all of $\T$. From
this and the linear independence of the functions 
$\tilde f_1,\ldots, \tilde f_N$, we obtain $c_j=0$ for all $j=1,\ldots,N$.  
So, the projected functions $\Qop\tilde f_j$, $j=1,\ldots,N$, are linearly
independent, as claimed. Finally, we claim that $\Qop\tilde f_j$, 
$j=1,\ldots,N$, belong to the orthocomplement of $\mathcal{S}_\beta$ in
$H^2$. If $(\cdot,\cdot)_{H^2}$ denotes the sesquilinear inner product of
$H^2$, we calculate that for $m=0,1,2,\ldots$, 
\begin{multline*}
\big(\proj\tilde f_j,\e^{-\pi m\frac{1-z}{1+z}}\big)_{H^2}=
\frac{1}{2\pi}\int_{\T}\proj\tilde{f_j}(\zeta)
\,\mathrm{conj}\,\Big(\e^{-\pi m\frac{1-\zeta}{1+\zeta}}\Big)\,|\diff\zeta|
\\
=\frac{1}{2\pi}\int_{\T}\tilde{f_j}(\zeta)
\,\mathrm{conj}\,\Big(\e^{-\pi m\frac{1-\zeta}{1+\zeta}}\Big)\,|\diff\zeta|
=\frac{1}{\pi}\int_\R f_j(x)\e^{\imag \pi m x}\,\diff x
=0,
\end{multline*}
where the last equality uses that $f_j\in\mathcal{M}_\beta^\bot$. 
In the analogous fashion, we get that for $n=0,1,2,\ldots$,
\begin{multline*}
\big(\proj\tilde f_j,\e^{-\pi\beta n\frac{1+z}{1-z}}\big)_{H^2}=
\frac{1}{2\pi}\int_{\T}\proj\tilde{f_j}(\zeta)
\,\mathrm{conj}\,\Big(\e^{-\pi\beta n\frac{1+\zeta}{1-\zeta}}\Big)
\,|\diff\zeta|
\\
=\frac{1}{2\pi}\int_{\T}\tilde{f_j}(\zeta)
\,\mathrm{conj}\,\Big(\e^{-\pi\beta n\frac{1+\zeta}{1-\zeta}}\Big)
\,|\diff\zeta|
=\frac{1}{\pi}\int_\R f_j(x)\e^{-\imag \pi\beta n/ x}\,\diff x
=0,
\end{multline*}
where again we use that $f_j\in\mathcal{M}_\beta^\bot$. 
It follows that the codimension of the $H^2$-closure of $\mathcal{S}_\beta$ 
must be $\ge N$. As $N$ was arbitrary, this means that the $H^2$-closure of 
$\mathcal{S}_\beta$ must have infinite codimension in $H^2$. 
\end{subsection}
\end{section}

\end{document}